\tikzstyle{box} = [rectangle,text centered, draw=black]
\tikzstyle{arrow} = [thick, ->, >=stealth]
\newcommand{\rd}{{\rm d}}
\newcommand{\e}{{\rm e}}
\newcommand{\Ran}{\mathop{\rm Ran}}
\newcommand{\R}{{\mathbb R}}
\newcommand{\C}{{\mathbb C}}
\newcommand{\Z}{{\mathbb Z}}
\newcommand\eps{\epsilon}
\newcommand\beq{\begin{equation}}
\newcommand\eeq{\end{equation}}
\newcommand{\beqnt}{\begin{equation*}}
\newcommand{\eeqnt}{\end{equation*}}
\newcommand\im{\mathrm{Im}\,}
\newcommand\I{\mathrm{i}}
\DeclareMathOperator{\supp}{supp}
\DeclareMathOperator{\Div}{div}
\DeclareMathOperator{\pv}{p.v.}
\newcommand{\mc}[1]{\mathcal{#1}}
\newtheorem{theorem}{Theorem}[section]
\newtheorem{definition}[theorem]{Definition}
\newtheorem{proposition}[theorem]{Proposition}
\newtheorem{corollary}[theorem]{Corollary}
\newtheorem{lemma}[theorem]{Lemma}
\newtheorem{remark}[theorem]{Remark}
\begin{document}

\title[From spectral cluster to resolvent estimates]{From spectral cluster to uniform resolvent estimates on compact manifolds}

\keywords{Spectral cluster estimates, uniform resolvent estimates}
\subjclass[2020]{58J50, 35P15}
 \author{Jean-Claude Cuenin}
 \address{Department of Mathematical Sciences, Loughborough University, Loughborough,
 Leicestershire, LE11 3TU United Kingdom}
 \email{J.Cuenin@lboro.ac.uk}
 
 \date{November 12, 2020}

\begin{abstract}
It is well known that uniform resolvent estimates imply spectral cluster estimates. We show that the converse is also true in some cases. In particular, Sogge's universal spectral cluster estimates for the Laplace--Beltrami operator on closed Riemannian manifolds directly imply uniform resolvent estimates outside a parabolic region, without any reference to parametrices. The method is purely functional analytic and takes full advantage of the known spectral cluster bounds. This yields new resolvent estimates for manifolds with boundary or with low-regularity metrics, among other examples. Moreover, we show that the resolvent estimates are stable under perturbations and use this to establish uniform Sobolev and spectral cluster inequalities for Schrödinger operators with singular potentials.
\end{abstract}

\maketitle

\section{Introduction}  

Let $\Delta_g$ be the Laplace-Beltrami operator on a compact boundaryless Riemannian manifold $(M,g)$ of dimension $n\geq 3$. Dos Santos Ferreira, Kenig and Salo \cite{MR3200351} proved the uniform Sobolev inequality
\begin{align}\label{Dos Santos Ferreira, Kenig and Salo}
\|u\|_{L^{\frac{2n}{n-2}}(M)}\lesssim\|(\Delta_g+(\lambda+\I)^2)u\|_{L^{\frac{2n}{n+2}}(M)}.
\end{align}
This generalizes a celebrated result of Kenig, Ruiz and Sogge \cite{MR894584} in the Euclidean case to the setting of compact manifolds without boundary. For the torus the inequality \eqref{Dos Santos Ferreira, Kenig and Salo} is due to Shen \cite{MR2366961}. Bourgain, Shao, Sogge and Yao \cite{BourgainShaoSoggeEtAl2015} obtained improved bounds on negatively curved manifolds and on the torus. Using similar techniques, Krupchyk and Uhlmann \cite{MR3285241} generalized \eqref{Dos Santos Ferreira, Kenig and Salo} to higher order elliptic differential operators satisfying a suitable curvature condition. Inequality \eqref{Dos Santos Ferreira, Kenig and Salo} is equivalent to a $L^{q'}\to L^{q}$ estimate for the resolvent operator $(\Delta_g+(\lambda+\I)^2)^{-1}$, with $q=2n/(n-2)$. Frank and Schimmer \cite{MR3620715}, and, independently, Burq, Dos Santos Ferreira and Krupchyk \cite{MR3848231}, proved the endpoint version
\begin{align}\label{Frank and Schimmer}
\|(\Delta_g+(\lambda+\I)^2))^{-1}\|_{L^{q'}(M)\to L^{q}(M)}\lesssim \langle\lambda\rangle^{2\sigma(q)-1},
\end{align}
where $\langle\lambda\rangle=2+|\lambda|$, $q=q_n:=2(n+1)/(n-1)$ is the critical exponent, and 
\begin{align}\label{def. sigma(q)}
\sigma(q)=\max\bigl( \, n(\tfrac12-\tfrac1q)-\tfrac12, \, \tfrac{n-1}2(\tfrac12-\tfrac1q)\, \bigr).
\end{align}
In fact, all other estimates \eqref{Frank and Schimmer} for $2\leq q\leq 2n/(n-2)$ follow from the endpoint case by Sobolev embedding and by interpolation with the trivial bound for $q=2$. In particular, this yields \eqref{Dos Santos Ferreira, Kenig and Salo} since $2\sigma(q)-1=0$ for $q=2n/(n-2)$. 

It is also well known (see e.g.\ \cite[Lemma 10]{MR3620715}) that \eqref{Frank and Schimmer} implies Sogge's \cite{MR930395} spectral cluster estimates
\begin{align}\label{Sogge}
\|\Pi_{[\lambda,\lambda+1]}\|_{L^2(M)\to L^{q}(M)}\lesssim \langle\lambda\rangle^{\sigma(q)}
\end{align}
for $2\leq q\leq \infty$. Here $\Pi_{[\lambda,\lambda+1]}:=\mathbf{1}_{[\lambda,\lambda+1]}(\sqrt{-\Delta_g})$, defined by the functional calculus, is the orthogonal projection onto the subspace of functions in $L^2(M)$ that are spectrally localized to frequencies in the unit length window $ [\lambda,\lambda+1]$. The $q=\infty$ bound is related to the sharp counting function remainder estimate of Avakumovi\'c, Levitan and Hörmander \cite{MR80862,MR0058067,MR0609014}. On general manifolds with boundary, Grieser \cite{MR1924468} was the the first to prove $L^{\infty}$ estimates for eigenfunctions (but not spectral clusters) without convexity or concavity assumptions.

%%%%%%%%%%%%%%%%%%%%%%%%%%%%%%%%%%%%%%%%%%%%%%%%%%%%%%%%%%%%%%%%%

It does not seem to be known whether the converse implication $(3)\implies (2)$ (or even $(3)\implies (1)$) is true in full generality. To quote the authors of \cite{MR3200351}: 
\begin{center}
``\emph{It seems that such estimates cannot be obtained as a direct consequence of the spectral cluster estimates obtained by Sogge}". 
\end{center}
This forced them to redo the Hadamard parametrix construction of \cite{MR930395}. 
The paper \cite{MR3200351} provides a fairly complete study of the mapping properties of the Hadamard parametrix for the resolvent and the corresponding remainder term, using oscillatory integral theorems. Only a special case of these estimates is needed to prove \eqref{Dos Santos Ferreira, Kenig and Salo}. Frank and Schimmer~\cite{MR3620715} 
avoided the explicit construction of the Hadamard parametrix, but used some of its off-diagonal (non self-dual) estimates established in \cite{MR3200351}. The approach of \cite{BourgainShaoSoggeEtAl2015} avoids oscillatory integral theorems and only uses stationary phase to prove \eqref{Dos Santos Ferreira, Kenig and Salo}. Sogge's \cite{MR930395} universal spectral cluster bounds are used to handle a ``global term". However, the proof still uses the short-time Hadamard parametrix for the wave propagator $\cos(t\sqrt{-\Delta_g})$ to handle a ``local term". The bounds for the latter are independent of the global geometry of the manifold. One of our main results, Theorem \ref{theorem spectral cluster to resolvent intro} (or rather its generalization, Theorem \ref{theorem spectral cluster to resolvent stronger version}), is close to \cite[Theorem 1.3]{BourgainShaoSoggeEtAl2015}, which shows that improvements of the $L^{q'}\to L^q$ resolvent bounds for $(\Delta_g+(\lambda+\I\mu)^2)^{-1}$ to a larger region $|\mu|\geq \epsilon(\lambda)$, with $\epsilon(\lambda)\leq 1$ and $\lambda\geq 1$, are equivalent to $L^2\to L^q$ spectral cluster bounds in shrinking frequency windows $[\lambda,\lambda+\epsilon(\lambda)]$. 
The first difference between this and our result is that we are in a sense concerned with the opposite of improved bounds. Namely, we consider metrics of low regularity and seek bounds on the resolvent without losses compared to the smooth case. In fact, we will show how the two approaches can be combined to get improved resolvent bounds from improved spectral cluster bounds, without any a priori assumption on the smoothness of the metric. 
Of course, it is unreasonable to expect improved estimates for metrics of such low regularity as considered here, but the point is that all the geometric information is now encoded in the spectral cluster bound.
The second difference is that we completely avoid parametrix estimates and, again, solely rely on the spectral cluster bounds (or the equivalent quasimode bounds). In the case of improved estimates, we replace the step in \cite{BourgainShaoSoggeEtAl2015} involving the Hadamard parametrix by a more abstract argument. As an application, we convert the recent spectral cluster bounds of Canzani and Galkowski \cite{Canzani--Galkowski} to improved resolvent bounds under pure dynamical assumptions on the manifold.
The Hadamard parametrix construction breaks down for metrics with limited smoothness. 
Smith \cite{MR1644105,MR2262171} and Tataru \cite{MR1749052,MR1833146,MR1887639} used wave packet techniques to construct parametrices for regularized wave operators
and established Strichartz and square function estimates for the variable-coefficient wave equation with rough coefficients. In contrast to the smooth case, one cannot hope for asymptotic expansions of the fundamental solution modulo smoothing operators since the errors in the parametrix are too large. However, for the sake of proving estimates, they can be considered as forcing terms, and the equation can be solved iteratively. The wave packet parametrices require roughly two derivatives on the coefficients as this guarantees uniqueness of the bicharacteristic flow. For metrics with $C^{1,1}$ coefficients one could prove the parametrix bounds for the resolvent estimates concurrently with those for the spectral cluster bounds.
This is essentially the approach of Koch and Tataru \cite[Theorem 2.5]{MR2094851}. The difference between the $TT^*$ version of the spectral cluster bound (or quasimode bound) and the resolvent is merely a causal factor $\mathbf{1}_{t\geq 0}$, where $t$ denotes one of the spatial coordinates (see the proof of Lemma \ref{lemma first order 2}). The dispersive ($L^1\to L^{\infty}$) estimate \cite[Proposition 7.4]{MR2094851} is insensitive to this factor, and the spectral cluster and resolvent estimates both follow from this.
However, this strategy fails to prove resolvent analogues of the spectral cluster bounds of Smith and Sogge \cite{MR2316270} or Smith, Koch and Tataru \cite{MR2443996}. In those cases, the metric is only Lipschitz, and good parametrix bounds hold only on small spatial scales. Highly nontrivial energy propagation estimates are needed to prove estimates on the unit scale. Following the arguments of these papers carefully, one could perhaps prove resolvent estimates directly. However, the preceding discussion should make it clear that this is not completely straightforward.
Theorem \ref{theorem spectral cluster to resolvent intro} has the advantage that it treats the spectral cluster estimates as a black box and thus yields several new resolvent estimates in one stroke. 
%%%%%%%%%%%%%%%%%%%%%%%%%%%%%%%%%%%%%%%%%%%%%%%%%%%%%%%%%%%%%%%%%

A simple argument involving the Phragmén–Lindelöf maximum principle (see e.g.\ \cite{MR3848231}) shows that \eqref{Frank and Schimmer} is equivalent to the bound
\begin{align}\label{Phragmen}
\|(\Delta_g+z)^{-1}\|_{L^{q'}(M)\to L^{q}(M)}\lesssim_{\delta} |z|^{\sigma(q)-1/2}
\end{align}
for arbitrary $\delta>0$ and all $z\in\C\setminus[0,\infty)$ with $\im\sqrt{z}\geq \delta$. Here we take the branch of the square root with $\im\sqrt{\cdot}>0$. After possibly multiplying the metric by an innocuous factor, we may always assume that $\delta=1$, in accordance with \eqref{Frank and Schimmer}. Because $\sigma(q)$ is decreasing in $1/q$,  the exponent of $|z|$ is always nonpositive. Hence these estimates are uniform in the region\footnote{This corresponds to the outside of a parabola, see \cite{MR3200351,BourgainShaoSoggeEtAl2015}.} $\im\sqrt{z}\geq 1$, which is why they are called \emph{uniform} resolvent estimates. We will also call the resolvent version of \eqref{Dos Santos Ferreira, Kenig and Salo} a ``uniform Sobolev inequality". 
Spectral cluster estimates \eqref{Sogge} with frequency windows of unit length are called ``universal" since they hold on any compact boundaryless Riemannian manifold. These are best possible in terms of the exponent $\sigma(q)$, see \cite[Chapter 5]{MR3645429}. However, they are rarely optimal for individual eigenfunctions (see e.g.\ \cite{MR1924569,MR3987179}). If $M$ is the sphere or, more generally, a Zoll manifold, then no improvements are possible, due to the high multiplicity of eigenvalues. Similarly, \eqref{Phragmen} is best possible in terms of the lower bound $\im\sqrt{z}\geq 1$ for these manifolds \cite{BourgainShaoSoggeEtAl2015,MR3620715}. In the low-regularity setting that we will consider here, $g\in C^s$, there may be losses in the exponents of \eqref{Sogge} and \eqref{Phragmen}, i.e.\ $\sigma(q)$ may have to be replaced by $\gamma(q)>\sigma(q)$. We will refer to spectral cluster estimates with exponent as in the smooth case, i.e.\ $\gamma(q)=\sigma(q)$, as ``perfect" estimates.\footnote{This terminology is borrowed from a seminar talk of Daniel Tataru.}
For the resolvent, we distinguish two types of perfect estimates. The first we call estimates with ``perfect exponents". This means that the exponent is the same as in \eqref{Phragmen} (and hence the same as for elliptic estimates), but the region $\Omega\subset\C$ for which the estimate holds may be smaller than the ``perfect region" $\Omega=\{\im\sqrt{z}\geq 1\}$. In the second type the region $\Omega$ is perfect, but the exponent may be larger than in \eqref{Phragmen}. If both the exponent and the region is perfect, we simply say that we have perfect resolvent estimates. Note that, by Sobolev embedding, if we have perfect exponents for $q_1$, then the same is true for all $q_2\geq q_1$. We will consider regions of the form $\Omega_{\rho}=\{z=(\lambda+\I\mu)^2:\,\lambda\geq 1,\,|\mu|\gtrsim \lambda^{\rho}\}$, disregarding  the elliptic region (the outside of the union of a disk around the origin and a cone in the direction of the positive $z$-axis) in which \eqref{Phragmen} trivially holds. Hence the relevant range for $\rho$ is $0\leq \rho\leq 1$, and the perfect region corresponds to $\rho=0$.

%%%%%%%%%%%%%%%%%%%%%%%%%%%%%%%%%%%%%%%%%%%%%%%%%%%%%%%%%%%%%%%%%%
When proving \eqref{Frank and Schimmer} in a coordinate patch of the manifold, the crucial region in phase space is $|x|\lesssim 1$, $|\xi|\approx \lambda$. Away from this region, one has stronger elliptic estimates, and the inequality follows by Sobolev embedding. For the arguments concerning elliptic estimates we refer to the proof of \cite[Corollary 5]{MR2252331} or that of \cite[Theorem 7]{MR2280790} but we will discuss these briefly in Subsection \ref{subs. Elliptic estimates}. In the crucial phase space region, the difference between spectral cluster and resolvent estimates is at most logarithmic in the frequency $\lambda$. In fact, this holds in a rather abstract setting, see Proposition \ref{proposition abstract bounds}. Similar abstract considerations (see e.g.\ \cite[Theorem XIII.25]{MR0493421} or \cite[Corollary 3.3]{MR3211800}) show that the spectral cluster bound \eqref{Sogge} is equivalent to the analogue of \eqref{Frank and Schimmer} for the imaginary part of the resolvent. Hence, any difference between \eqref{Frank and Schimmer} and \eqref{Sogge} can only come from the real part. The logarithmic loss incurred by the abstract bounds hints at an inefficiency in detecting cancellations. 
This is most clearly seen in the constant coefficient case, where the distributional formula
\begin{align*}%\label{distributional formula}
(|\xi|^2-(\lambda+\I 0)^2)^{-1}=\pv(|\xi|^2-\lambda^2)^{-1}+\I\pi\delta(|\xi|^2-\lambda^2)
\end{align*}
reveals cancellations of the real part. More generally, in the case where the operator has only absolutely continuous spectrum, Guillarmou and Hassell \cite[Lemma 4.1]{MR3211800} showed that the cancellation effects can be exploited by an integration by parts argument. This requires stronger pointwise bounds on the spectral measure and all its derivatives of order $\leq n/2$. The cancellation is related to the causal structure of the resolvent mentioned before. 
%%%%%%%%%%%%%%%%%%%%%%%%%%%%%%%%%%%%%%%%%%%%%%%%%%%%%%%%%%%%%%%%%

Most modern proofs of spectral cluster and resolvent estimates rely on wave equation techniques. 
One popular approach is via square function estimates for the wave equation \cite{MR1168960,MR2262171,MR2457396,MR3900030,Smith resolvent}, another one is via Strichartz estimates for a half wave equation \cite{MR2094851,MR3848231,MR2342881,MR3282983}. The latter is purely stationary, i.e.\ all the variables are spatial. Locally in phase space, one can single out a distinguished direction that plays the role of a fictional time. In fact, the papers listed under the first approach also use this phase space localization. 
While square function estimates are of independent interest, the second approach seems more natural for a purely stationary problem. More importantly, the homogeneous Strichartz estimates (corresponding to spectral cluster bounds) can easily be upgraded to inhomogeneous estimates (corresponding to resolvent bounds) by invoking the Christ--Kiselev lemma \cite{MR1809116,MR1789924,MR2233925}. For square function estimates, the passage from homogeneous to inhomogeneous estimates is more involved, as the recent work of Smith \cite{Smith resolvent} shows.
%%%%%%%%%%%%%%%%%%%%%%%%%%%%%%%%%%%%%%%%%%%%%%%%%%%%%%%%%%%%%%%%%

\section{Main results}

\subsection{From spectral cluster to resolvent bounds}

\begin{theorem}\label{theorem spectral cluster to resolvent intro}
Let $(M,g)$ be a compact Riemannian manifold without boundary and with Lipschitz metric $g$. Assume that $2\leq q\leq 2n/(n-2)$ if $n\geq 3$ and $2\leq q<\infty$ if $n=2$, and that the spectral cluster estimate
\begin{align}\label{abstract spectral cluster estimate into}
\|\Pi_{[\lambda,\lambda+1]}\|_{L^2(M)\to L^{q}(M)}\lesssim \langle\lambda\rangle^{\gamma(q)}
\end{align}
holds for some $\gamma(q)\in[\sigma(q),\sigma(q)+1/2]$. Then we have the uniform resolvent estimate
\begin{align}\label{abstract resolvent estimate into}
\|(\Delta_g+(\lambda+\I)^2)^{-1}\|_{L^{q'}(M)\to L^{q}(M)}\lesssim \langle\lambda\rangle^{2\gamma(q)-1}.
\end{align}
\end{theorem}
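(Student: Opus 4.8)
The plan is to localise the estimate to a coordinate chart and a dyadic frequency shell, reduce the resulting microlocalised piece of the resolvent to the \emph{retarded} solution operator of a first‑order evolution equation in one of the spatial variables, and then pass from the spectral cluster (``homogeneous'') bound \eqref{abstract spectral cluster estimate into} to the resolvent (``inhomogeneous'') bound \eqref{abstract resolvent estimate into} by means of the Christ--Kiselev lemma, the only difference between the two objects being precisely the causal cutoff.

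I would first fix a finite cover of $M$ by coordinate charts with a subordinate partition of unity, together with a Littlewood--Paley decomposition in frequency. By the elliptic estimates and Sobolev embedding, exactly as in the proof of \cite[Corollary 5]{MR2252331} or \cite[Theorem 7]{MR2280790}, it suffices to bound, in a single chart, the microlocal piece supported where $|x|\lesssim 1$, $|\xi|\sim\lambda$ and $\xi/|\xi|$ lies in a small fixed cone, which after a rotation we may take to point along $e_1$; writing $x=(t,x')$ with $t=x_1$, we then have $\xi_1\sim\lambda$ and $|\xi'|\ll\lambda$ on the support. On this set the symbol of $\Delta_g+(\lambda+\I)^2$ factors, modulo terms of lower order (these involve at most one derivative of the Lipschitz metric), as a product of an elliptic factor — essentially $(\lambda+\I)+\sqrt{-\Delta_g}\sim\lambda$, quantised — and a hyperbolic factor $D_t-A_\lambda$, where $A_\lambda=A_\lambda(t,x',D_{x'})$ is first order in $x'$, merely bounded and measurable in $t$ because $g\in C^{0,1}$, with real part of its symbol $\sim\lambda$ on the relevant set and imaginary part $\gtrsim 1$. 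Inverting the elliptic factor is routine and produces the extra power $\langle\lambda\rangle^{-1}$; and since $\im A_\lambda\gtrsim 1>0$, Duhamel's formula identifies $(D_t-A_\lambda)^{-1}$ with the retarded operator
\begin{equation*}
(D_t-A_\lambda)^{-1}F(t)=\I\int_{s<t}U_\lambda(t,s)F(s)\,\rd s,\qquad D_tU_\lambda(t,s)=A_\lambda(t)U_\lambda(t,s),\quad U_\lambda(s,s)=I.
\end{equation*}
The lower‑order errors of the factorisation are treated as forcing terms and removed by a convergent iteration; this reduction is the content of Lemma \ref{Lemma Hyperbolic equation}.

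The core of the argument then combines two ingredients, valid in the local model just described. On the one hand, the hypothesis \eqref{abstract spectral cluster estimate into}, combined with the $TT^*$ identity $\|\Pi_{[\lambda,\lambda+1]}\|_{L^2\to L^q}^2=\|\Pi_{[\lambda,\lambda+1]}\|_{L^{q'}\to L^q}$ (equivalently, the corresponding quasimode bound), implies — via Stone's formula, which exhibits $\Pi_{[\lambda,\lambda+1]}$ through the \emph{full} half‑wave group, and the same local reduction — the space--time bound
\begin{equation*}
\Bigl\|\int U_\lambda(t,s)F(s)\,\rd s\Bigr\|_{L^q_tL^q_{x'}}\lesssim\langle\lambda\rangle^{2\gamma(q)-1}\|F\|_{L^{q'}_tL^{q'}_{x'}}
\end{equation*}
\emph{without the causal restriction $s<t$} (the variables ranging over a bounded set as dictated by the chart), the extra $\langle\lambda\rangle^{-1}$ over the naive $\langle\lambda\rangle^{2\gamma(q)}$ again coming from the elliptic factor. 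On the other hand, since $q>2$ and hence $q'<q$, the Christ--Kiselev lemma \cite{MR1809116,MR1789924,MR2233925} upgrades this uncausal bound to the identical bound for the retarded operator $F\mapsto\int_{s<t}U_\lambda(t,s)F(s)\,\rd s$, losing only an absolute constant. Combined with the reduction above, this is precisely \eqref{abstract resolvent estimate into} for the microlocal piece; summing over the finitely many cone pieces and charts, and invoking the cited elliptic bounds on the complementary frequency regions, completes the proof.

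The main obstacle is the hyperbolic reduction: carrying out the factorisation and the Duhamel representation rigorously when $g$ is only Lipschitz, organising the lower‑order errors so that the iteration converges, and in particular verifying that the passage to the space--time model is lossless — this last point is exactly where the causal structure $\mathbf{1}_{s<t}$ is genuinely exploited. (The purely operator‑theoretic comparison of $\|(\Delta_g+(\lambda+\I)^2)^{-1}\|_{L^{q'}\to L^q}$ with $\|\im(\Delta_g+(\lambda+\I 0)^2)^{-1}\|_{L^{q'}\to L^q}$ applies verbatim but loses a factor $\log\langle\lambda\rangle$; this is Proposition \ref{proposition abstract bounds}, and the role of the hyperbolic model is precisely to recover that logarithm.) Finally, the hypotheses $\gamma(q)\in[\sigma(q),\sigma(q)+1/2]$ and $q\le 2n/(n-2)$ enter in the bookkeeping: $\gamma(q)\ge\sigma(q)$ is automatic, while the upper bound on $\gamma(q)$ and the range of $q$ keep the target exponent $2\gamma(q)-1$ small enough that the Sobolev embeddings in the complementary (elliptic) frequency regions and the error iteration close.
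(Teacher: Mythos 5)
Your proposal follows essentially the same route as the paper: after reducing to a frequency-localized first-order equation $(D_t - a_\lambda^w)u = f$ in a coordinate patch (the paper's equation \eqref{frequency localized equation}), Lemma \ref{Lemma Hyperbolic equation} converts the spectral cluster bound into the microlocalized quasimode estimate \eqref{microlocalized quasimode}, applies the $TT^*$ argument to get the uncausal space–time bound, and then invokes the Christ--Kiselev lemma to pass to the retarded operator — precisely the structure you describe, with the logarithm of Proposition \ref{proposition abstract bounds} recovered exactly because the resolvent is the \emph{causal} truncation. One small stylistic difference worth noting: the paper factorizes the real symbol $\lambda^2 - g_\lambda^{ij}(x)\xi_i\xi_j$ so that $a_\lambda$ stays real-valued and the propagator $S(t,s)$ is unitary, treating the imaginary part of $(\lambda+\I)^2$ as a forcing term of size $\epsilon R u$, whereas you fold the imaginary part into $A_\lambda$ (so $\im A_\lambda\gtrsim1$) to select the retarded solution; this still works on the bounded $t$-interval, but it makes the propagator non-unitary, so the $TT^*$ step needs a word about why $U_\lambda(0,s)$ may be replaced by $U_\lambda(s,0)^*$ up to $O(1)$ factors.
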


\begin{remark}
The restrictions on $\gamma(q)$ are natural since, 
by Sobolev embedding, the bounds trivially hold for $\gamma(q)=\sigma(q)+1/2$. On the other hand, we cannot have better bounds than in the smooth case.
\end{remark}

\begin{remark}
We actually prove a more general result (Theorem \ref{theorem spectral cluster to resolvent stronger version}) that allows us to 1) obtain resolvent estimates with perfect exponents in a an intermediate region $\Omega_{\rho}\subset\C$, even when perfect spectral cluster estimates are not available and 2) convert improved spectral cluster bounds to improved resolvent bounds (in the sense discussed in the introduction).
\end{remark}

\begin{remark}\label{remark quadratic form}
We do not touch here on operator-theoretic considerations related to the definition of $\Delta_g$ and domain issues. In the context of low-regularity metrics, it is advantageous to define the operator by quadratic form methods, see e.g.\ \cite[Chapter 6]{MR1349825}. Thus, we implicitly assume that $\Delta_g$ is in divergence form and that the resolvent equation $\Delta_g u+(\lambda+\I)^2u=F$ is interpreted in a weak sense (see Subsection \ref{subsec. Localization and reduction to an equation on Rn} for more details). This point of view is also taken in \cite{MR2280790, MR2443996, MR3282983}.
\end{remark}

\subsection{Stability under perturbations}

\begin{theorem}\label{theorem stability under perturbations non-effective intro}
Assume that the conditions of Theorem \ref{theorem spectral cluster to resolvent intro} are satisfied with perfect exponent $\gamma(q)=\sigma(q)$ and that $V\in L^p(M)$ with $1/p=1/q'-1/q$. Then there exists $\Lambda=\Lambda(M,q,V)\geq 1$ such that
\begin{align}\label{Perfect resolvent estimates with V}
\|(\Delta_g+V+(\lambda+\I)^2)^{-1}\|_{L^{q'}\to L^q}\lesssim \langle\lambda\rangle^{2\sigma(q)-1}
\end{align}
for $\lambda\geq \Lambda$. Moreover, if $n=2$ or $n\geq 3$ and $q<2n/(n-2)$, then $\Lambda$ depends on $V$ only though $\|V\|_{L^p}$. 
\end{theorem}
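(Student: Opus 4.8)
The plan is to treat $V$ as a perturbation and build the resolvent of $\Delta_g+V$ from $R_0(\lambda)\coloneqq(\Delta_g+(\lambda+\I)^2)^{-1}$ by a Birman--Schwinger identity, invoking Theorem~\ref{theorem spectral cluster to resolvent intro} only for the input bound on $R_0$. By hypothesis the conditions of Theorem~\ref{theorem spectral cluster to resolvent intro} hold with $\gamma(q)=\sigma(q)$, so
\[
\|R_0(\lambda)\|_{L^{q'}\to L^q}\lesssim\langle\lambda\rangle^{2\sigma(q)-1},
\]
while, since $\Delta_g$ is self-adjoint and $\im((\lambda+\I)^2)=2\lambda$, also $\|R_0(\lambda)\|_{L^2\to L^2}\le(2\lambda)^{-1}$. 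Factor $V=vw$ with $w=|V|^{1/2}$ and $v=\mathrm{sgn}(V)|V|^{1/2}$, so that $v,w\in L^{2p}$ with $\|v\|_{L^{2p}}=\|w\|_{L^{2p}}=\|V\|_{L^p}^{1/2}$; since $1/p=1/q'-1/q$ is the same as $\tfrac1{2p}=\tfrac12-\tfrac1q$, H\"older yields bounded multiplication operators $v\colon L^2\to L^{q'}$ and $w\colon L^q\to L^2$ of norm $\|V\|_{L^p}^{1/2}$. I would then use the identity
\[
(\Delta_g+V+(\lambda+\I)^2)^{-1}=R_0-R_0\,v\,(1+wR_0v)^{-1}\,w\,R_0,
\]
which holds, and defines a bounded operator $L^{q'}\to L^q$, as soon as $\|wR_0(\lambda)v\|_{L^2\to L^2}<1$; checking that the right-hand side is the genuine resolvent (with $\Delta_g+V$ the quadratic-form sum) is the routine Birman--Schwinger computation and needs neither self-adjointness of $\Delta_g+V$ nor that $V$ be real. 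Granting $\|wR_0(\lambda)v\|_{L^2\to L^2}\le\tfrac12$ for $\lambda\ge\Lambda$, one reads off
\[
\|(\Delta_g+V+(\lambda+\I)^2)^{-1}\|_{L^{q'}\to L^q}\le\|R_0\|_{L^{q'}\to L^q}\bigl(1+2\|V\|_{L^p}\|R_0\|_{L^{q'}\to L^q}\bigr)\lesssim\langle\lambda\rangle^{2\sigma(q)-1},
\]
using $\|R_0v\|_{L^2\to L^q}\le\|v\|_{L^{2p}}\|R_0\|_{L^{q'}\to L^q}$ and the symmetric bound for $wR_0$. So everything reduces to estimating $\|wR_0(\lambda)v\|_{L^2\to L^2}$.

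If $n=2$, or $n\ge3$ and $q<2n/(n-2)$, this is immediate:
\[
\|wR_0(\lambda)v\|_{L^2\to L^2}\le\|w\|_{L^{2p}}\|R_0(\lambda)\|_{L^{q'}\to L^q}\|v\|_{L^{2p}}\lesssim\|V\|_{L^p}\,\langle\lambda\rangle^{2\sigma(q)-1}.
\]
In this range $2\sigma(q)-1<0$ strictly, since $\sigma$ is strictly increasing in $q$ while $2\sigma(\tfrac{2n}{n-2})-1=0$ (and, when $n=2$, one has $2\sigma(q)-1<0$ already for every finite $q$). Hence the right-hand side tends to $0$ as $\lambda\to\infty$, and there exists $\Lambda$, depending on $M$ and $q$ and on $V$ only through $\|V\|_{L^p}$, beyond which it is $\le\tfrac12$. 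This gives the estimate together with the last sentence of the theorem.

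It remains to treat the critical case $q=2n/(n-2)$ (so $p=n/2$ and $2\sigma(q)-1=0$), where the bound on $R_0$ no longer decays and the unbounded part of $V$ must be peeled off. I would split $|V|^{1/2}=a_N+b_N$, with $a_N=|V|^{1/2}\mathbf{1}_{\{|V|\le N\}}$ (so $\|a_N\|_{L^\infty}\le N^{1/2}$ and $\|a_N\|_{L^{2p}}\le\|V\|_{L^p}^{1/2}$) and $b_N=|V|^{1/2}\mathbf{1}_{\{|V|>N\}}$ (so $\|b_N\|_{L^{2p}}^2=\bigl\||V|\mathbf{1}_{\{|V|>N\}}\bigr\|_{L^{n/2}}\to0$ as $N\to\infty$), and correspondingly $v=\mathrm{sgn}(V)(a_N+b_N)$, $w=a_N+b_N$. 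Expanding $wR_0v$ into four pieces: the one with $a_N$ on both sides is controlled by the $L^2\to L^2$ bound, $\|a_NR_0\,\mathrm{sgn}(V)\,a_N\|_{L^2\to L^2}\le\|a_N\|_{L^\infty}^2\|R_0\|_{L^2\to L^2}\le N/(2\lambda)$, while each of the other three pieces carries a factor $b_N$ and is bounded, on routing the remaining factor as multiplication $L^q\to L^2$ or $L^2\to L^{q'}$ of norm $\le\|V\|_{L^p}^{1/2}$ and $R_0$ as $L^{q'}\to L^q$, by $\lesssim\|b_N\|_{L^{2p}}\|V\|_{L^p}^{1/2}\eqqcolon\eta(N)$, with $\eta(N)\to0$. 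Thus $\|wR_0(\lambda)v\|_{L^2\to L^2}\le C\eta(N)+CN/(2\lambda)$; choosing first $N$ with $C\eta(N)<\tfrac14$ and then $\Lambda$ with $CN/(2\Lambda)<\tfrac14$ gives $\|wR_0(\lambda)v\|_{L^2\to L^2}<\tfrac12$ for $\lambda\ge\Lambda$. Here $\Lambda$ depends on $N$, hence on the distribution function of $V$ and not merely on $\|V\|_{L^p}$, which is exactly why uniformity in $\|V\|_{L^p}$ can only be claimed in the subcritical range.

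The main obstacle is this critical case: there the perturbation is exactly critical for the relevant estimate, contributing on the same scale as the leading term in the $L^{q'}\to L^q$ topology, so genuine smallness must be manufactured, and the only available gain is the $O(\lambda^{-1})$ decay of $\|R_0\|_{L^2\to L^2}$, which applies only after truncating $V$ to a bounded function whose truncation threshold is intrinsically a feature of $V$. A secondary, standard technical point is making the Birman--Schwinger identity rigorous for the form-defined operator $\Delta_g$ with $v,w$ merely in $L^{2p}$; for $\lambda\ge\Lambda$ this requires nothing beyond the convergent Neumann series for $(1+wR_0v)^{-1}$.
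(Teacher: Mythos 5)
Your proof is correct and follows essentially the same strategy as the paper: a Neumann-series perturbation argument in which, in the critical case $q=2n/(n-2)$, the potential is split into a bounded piece controlled by the $O(\lambda^{-1})$ decay of $\|R_0\|_{L^2\to L^2}$ and a small-$L^{n/2}$ tail controlled by the $L^{q'}\to L^q$ bound. The technical packaging differs a bit. The paper operates in the adapted spaces $X(\lambda)=W_{\lambda}^{1/2,2}\cap W_{\lambda}^{s(q),q}$ and their duals (Lemmas \ref{lemma add in L2 bounds}--\ref{lemma add in elliptic bounds}) and then applies the second resolvent identity plus a Neumann series (Proposition \ref{proposition stability under perturbations effective}), splitting $V=V_{>\alpha_0}+V_{\le\alpha_0}$ directly. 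You instead factor $V=vw$ at the level of $|V|^{1/2}$ and work with the sandwiched Birman--Schwinger operator $wR_0v$ on $L^2$, splitting $|V|^{1/2}=a_N+b_N$ and expanding into four pieces. Both achieve the same smallness of the iterate; the Birman--Schwinger route is marginally more elementary and avoids introducing the $X(\lambda)$-scale, whereas the paper's formulation is set up deliberately so that it also covers gradient and metric perturbations (as noted in its remarks), which the factorization $V=vw$ does not directly accommodate. Your reasoning about why $\Lambda$ can be taken uniform in $\|V\|_{L^p}$ in the subcritical range (strictly negative exponent) but not at the critical exponent (the truncation threshold $N$ depends on the distribution of $V$) matches the paper's.
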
 

\begin{remark}
The above also follows from recent results by Blair, Huang, Sire and Sogge \cite{blair2020uniform}. However, for the exponents $2\leq q\leq 2n/(n-2)$ and the self-dual estimates considered here, we can relax the assumptions in the abstract Theorem 2.1 of \cite{blair2020uniform}. Instead of assuming the quasimode \emph{and} the uniform Sobolev estimate for the unperturbed operator, (2.2) and (2.6) in \cite{blair2020uniform}, we only need to assume one of them (since they are equivalent).
\end{remark}

\begin{remark}
Since $M$ is compact, Hölder's inequalities implies that $L^{p_2}(M)\subset L^{p_1}(M)$ for $p_2\geq p_1$. We will subsequently consider critically singular potentials $V\in L^{n/2}(M)$. For fixed $V$, the restriction $\lambda\geq \Lambda$ can easily be removed since the inequality for $\lambda\leq \Lambda$ is trivial.
\end{remark}

\begin{remark}
The stability holds in greater generality, not just for potential perturbations, see Proposition \ref{proposition stability under perturbations effective}. Similar results may be found in \cite{MR2094851,MR2252331}.
\end{remark}

\begin{corollary}
Perfect spectral cluster estimates (i.e.\ \eqref{abstract spectral cluster estimate into} with $\gamma(q)=\sigma(q)$) are stable under perturbations by real-valued potentials $V\in L^{n/2}(M)$.
\end{corollary}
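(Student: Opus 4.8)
I would reduce the spectral cluster bound to the equivalent quasimode bound and then transfer the latter from $\Delta_g$ to $H\coloneqq\Delta_g+V$ by an elementary bootstrap; the one substantial point is that $L^{n/2}(M)$ is precisely the borderline space for which the error term can be absorbed.

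\emph{Reduction to quasimode estimates.} For each $q$ with $2\le q\le 2n/(n-2)$, the perfect spectral cluster estimate \eqref{abstract spectral cluster estimate into} (i.e.\ with $\gamma(q)=\sigma(q)$) for $\Delta_g$ is equivalent, with \emph{no} logarithmic loss, to the quasimode estimate
\begin{align*}
\|u\|_{L^q(M)}\lesssim\langle\lambda\rangle^{\sigma(q)}\bigl(\|u\|_{L^2(M)}+\langle\lambda\rangle^{-1}\|(\Delta_g+\lambda^2)u\|_{L^2(M)}\bigr),\qquad u\in H^2(M).
\end{align*}
One direction is immediate; for the other I would decompose $u=\sum_{j\ge 0}\Pi_{[j,j+1]}^{0}u$ into unperturbed clusters, group the terms into dyadic shells $|j-\lambda|\sim 2^{\ell}$, apply \eqref{abstract spectral cluster estimate into} on each window of a shell, and apply Cauchy--Schwarz \emph{within} each shell. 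The factor $(2^{\ell})^{-1/2}$ so produced turns the sum over shells from harmonic (where a naive termwise estimate would cost a $\log\lambda$) into geometric, and the $\ell$-th shell is controlled by $\|(\Delta_g+\lambda^2)u\|_{L^2}$ because $\Delta_g+\lambda^2$ has size $\gtrsim 2^{\ell}\max(\lambda,2^{\ell})$ there; the outer sum converges since $\sigma(q)\le\tfrac12$.

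\emph{Transfer by bootstrap.} As $V$ is real and $V\in L^{n/2}(M)$ with $n\ge 3$, the Sobolev embedding $H^1(M)\hookrightarrow L^{2n/(n-2)}(M)$ and Hölder's inequality make $V$ infinitesimally form-bounded relative to $-\Delta_g$, so $H$ is self-adjoint and bounded below with form domain $H^1(M)$; by Brezis--Kato and elliptic regularity for the Lipschitz metric, its $L^2$-eigenfunctions lie in $L^p(M)$ for all $p<\infty$ and in $H^2(M)$. Hence $\Pi_{[\lambda,\lambda+1]}^{V}\coloneqq\mathbf 1_{[\lambda,\lambda+1]}(\sqrt{-H})$ is well defined for $\lambda$ large, its range is spanned by such eigenfunctions, and for $u$ in that range $\|(H+\lambda^2)u\|_{L^2}\le(2\lambda+1)\|u\|_{L^2}$. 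Writing $(\Delta_g+\lambda^2)u=(H+\lambda^2)u-Vu$ and using the form bound gives $\|(\Delta_g+\lambda^2)u\|_{L^2}\lesssim\lambda\|u\|_{L^2}+\|V\|_{L^{n/2}}\|u\|_{H^1}$, while an integration by parts gives $\|u\|_{H^1}\lesssim\|(\Delta_g+\lambda^2)u\|_{L^2}^{1/2}\|u\|_{L^2}^{1/2}+\lambda\|u\|_{L^2}$. Substituting and applying Young's inequality absorbs the $\|(\Delta_g+\lambda^2)u\|_{L^2}$ term, leaving $\|(\Delta_g+\lambda^2)u\|_{L^2}\lesssim\lambda\|u\|_{L^2}$ for $\lambda\ge\Lambda$, with $\Lambda$ depending only on $\|V\|_{L^{n/2}}$. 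Plugging this into the quasimode estimate for $\Delta_g$ at $q$ yields $\|u\|_{L^q}\lesssim\lambda^{\sigma(q)}\|u\|_{L^2}$ for all $u\in\Ran\Pi_{[\lambda,\lambda+1]}^{V}$, i.e.\ \eqref{abstract spectral cluster estimate into} for $H$ with $\gamma(q)=\sigma(q)$ when $\lambda\ge\Lambda$; the range $\lambda\le\Lambda$ is trivial since there $\Pi_{[\lambda,\lambda+1]}^{V}$ has uniformly bounded finite rank with range in $L^q(M)$. (At the endpoint $q=2n/(n-2)$ one may instead combine Theorem~\ref{theorem spectral cluster to resolvent intro}, which converts the hypothesis into the uniform Sobolev inequality for $\Delta_g$, with Theorem~\ref{theorem stability under perturbations non-effective intro} applied with $1/p=1/q'-1/q=2/n$, and then pass to the imaginary part of the resolvent, cf.\ \cite[Corollary 3.3]{MR3211800}.)

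I expect the main obstacle to be this bootstrap: it is exactly the criticality $p=n/2$ that makes $\|Vu\|_{L^2}\lesssim\|V\|_{L^{n/2}}\|u\|_{L^{2n/(n-2)}}$ scale like the quasimode error term, so that it can be absorbed for large $\lambda$, while a supercritical potential would break this. A minor additional point is the operator-theoretic meaning of $H$ and the $H^2$-regularity of its eigenfunctions for such rough $V$, but this is standard.
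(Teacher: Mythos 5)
Your proof has a genuine gap at the crux of the bootstrap. The claimed estimate
\begin{equation*}
\|Vu\|_{L^2}\lesssim\|V\|_{L^{n/2}}\|u\|_{L^{2n/(n-2)}}\lesssim\|V\|_{L^{n/2}}\|u\|_{H^1}
\end{equation*}
is false: Hölder's inequality with $V\in L^{n/2}$ and $u\in L^{2n/(n-2)}$ only places $Vu$ in $L^{2n/(n+2)}$, and $L^{2n/(n+2)}\not\subset L^2$ (on a finite measure space the inclusion goes the other way). To bound $\|Vu\|_{L^2}$ in terms of $\|u\|_{H^1}$ one needs $V\in L^n$, which is strictly stronger than $L^{n/2}$. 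Form-boundedness of $V$ relative to $-\Delta_g$ (which does hold for $V\in L^{n/2}$) only controls the pairing $\langle Vu,u\rangle$, not the operator norm $V\colon H^1\to L^2$; you are conflating the two. Consequently the key intermediate bound $\|(\Delta_g+\lambda^2)u\|_{L^2}\lesssim\lambda\|u\|_{L^2}$ for $u\in\Ran\Pi^V_{[\lambda,\lambda+1]}$ does not follow, and the bootstrap collapses. One cannot fix this by switching to an $L^{q'}$-based quasimode estimate either, since for spectrally localized $u$ one controls $\|(A^2-\lambda^2)u\|_{L^{q'}}$ only through $L^2$, and that loses a power of $\lambda$.

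This failure is precisely why the paper transfers the perturbation at the level of the $L^{q'}\to L^q$ resolvent rather than at the level of the $L^2$-based quasimode bound: there the error $Vu$ is estimated by $\|Vu\|_{L^{q'}}\le\|V\|_{L^p}\|u\|_{L^q}$ with $1/p=1/q'-1/q$, which is exactly the hypothesis on $V$ and scales correctly, so a Neumann series closes (Lemma~\ref{lemma add in elliptic bounds}, Proposition~\ref{proposition stability under perturbations effective}). Your parenthetical remark — combine Theorem~\ref{theorem spectral cluster to resolvent intro}, Theorem~\ref{theorem stability under perturbations non-effective intro}, and then pass back via the imaginary part of the resolvent (i.e.\ \eqref{Lq'Lq resolvent implies L2Lq spectral cluster}) — is essentially the paper's argument and is correct; but it must be the main argument, not an alternative at the endpoint. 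The dyadic-shell derivation of the unrestricted quasimode bound in your first paragraph is fine and amounts to the elliptic estimate the paper invokes via \cite[Lemma~3.5]{MR2094851}, but it does not repair the transfer step.
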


\subsection{Manifolds with nonsmooth metrics}

\begin{theorem}\label{theorem nonsmooth into}
Let $\Delta_g$ be the Laplace--Beltrami operator on an $n$-dimensional compact boundaryless Riemannian manifold with $C^{s}$ metric\footnote{Lipschitz if $s=1$ and $C^{1,1}$ if $s=2$.}, $0\leq s\leq 2$. Assume that $2\leq q\leq 2n/(n-2)$ if $n\geq 3$ and $2\leq q<\infty$ if $n=2$. 
\begin{itemize}
\item[(i)] If $s=2$, then we have the perfect estimates
\begin{align}\label{Lq'Lq C11 intro}
\|(\Delta_g+(\lambda+\I)^2)^{-1}\|_{L^{q'}\to L^q}\lesssim \langle\lambda\rangle^{2\sigma(q)-1}.
\end{align}
\item[(ii)] If $s=1$ and $n=2$, then \eqref{Lq'Lq C11 intro} holds for $q>8$, and
\begin{align}\label{Lq'Lq Lipschitz intro q=8}
\|(\Delta_g+(\lambda+\I)^2)^{-1}\|_{L^{\frac{8}{7}}\to L^8}\lesssim \langle\lambda\rangle^{2\sigma(q)-1}\ln^3\langle\lambda\rangle.
\end{align}
\item[(iii)] For any $0\leq s\leq 2$ and with $\rho=\frac{2-s}{2+s}$, we have 
\begin{align}\label{Lq'Lq Cs intro}
\|(\Delta_g+(\lambda+\I\lambda^{\rho})^2)^{-1}\|_{L^{q'}\to L^q}\lesssim \langle\lambda\rangle^{2\sigma(q)-1}.
\end{align}
\end{itemize}
These estimates are stable under perturbations by real-valued potentials $V\in L^{n/2}(M)$.
% with $1/p=1/q'-1/q$.
\end{theorem}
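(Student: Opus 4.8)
The plan is to deduce all three parts of the theorem, together with the stability statement, from the conversion machinery already established, with no new parametrix work of our own: in each case I would locate the sharpest available $L^{2}\to L^{q}$ bound for the relevant spectral projector and, treating it as a black box, feed it into Theorem \ref{theorem spectral cluster to resolvent intro} when that bound is a perfect unit-window estimate, or into its refinement Theorem \ref{theorem spectral cluster to resolvent stronger version} when only a bound in a window whose width is dictated by the regularity of $g$ is available; the resolvent estimate is then read off directly. The stability under potential perturbations comes afterwards from Theorem \ref{theorem stability under perturbations non-effective intro} and Proposition \ref{proposition stability under perturbations effective}. Throughout, I take for granted the usual reduction to the phase-space region $|x|\lesssim 1$, $|\xi|\sim\lambda$, the complement being handled by elliptic estimates as in \cite{MR2252331,MR2280790}.

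For (i), a $C^{1,1}$ metric admits a wave packet parametrix for $\cos(t\sqrt{-\Delta_g})$ on a fixed time interval (Smith \cite{MR1644105}, Tataru \cite{MR1749052,MR1833146,MR1887639}); the attendant square function and quasimode bounds give \eqref{abstract spectral cluster estimate into} with the perfect exponent $\gamma(q)=\sigma(q)$ for the whole range of $q$ in the hypothesis, which is the quasimode half of \cite[Theorem 2.5]{MR2094851}. Since $\sigma(q)$ lies in the admissible range, Theorem \ref{theorem spectral cluster to resolvent intro} applies verbatim and yields \eqref{Lq'Lq C11 intro}. For (ii), I would take as input the two-dimensional Lipschitz spectral cluster bounds of Smith, Koch and Tataru \cite{MR2443996}: the perfect bound $\|\Pi_{[\lambda,\lambda+1]}\|_{L^2\to L^q}\lesssim\langle\lambda\rangle^{\sigma(q)}$ for $q>8$, and the same bound up to a fixed power of $\ln\langle\lambda\rangle$ at the borderline $q=8$. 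For $q>8$, Theorem \ref{theorem spectral cluster to resolvent intro} gives \eqref{Lq'Lq C11 intro} at once. For $q=8$, I would rerun the proof of that theorem keeping track of the subpolynomial factor: the two copies of the spectral cluster bound that enter the $TT^{*}$ step, together with the dyadic summation over frequency separations, produce \eqref{Lq'Lq Lipschitz intro q=8} with the power $\ln^{3}\langle\lambda\rangle$.

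For (iii), perfect unit-window bounds are no longer available once $s<2$, so I would instead invoke Theorem \ref{theorem spectral cluster to resolvent stronger version}, whose hypothesis is a spectral cluster bound in the window $[\lambda,\lambda+\lambda^{\rho}]$, $\rho=(2-s)/(2+s)$, with the smooth-case scaling at that width, namely
\begin{align*}
\|\mathbf{1}_{[\lambda,\lambda+\lambda^{\rho}]}(\sqrt{-\Delta_g})\|_{L^2(M)\to L^{q}(M)}\lesssim\langle\lambda\rangle^{\sigma(q)+\rho/2}.
\end{align*}
I would derive this bound from the wave packet parametrix constructions of Smith \cite{MR1644105,MR2262171} and Smith, Koch and Tataru \cite{MR2443996}: after mollifying $g$ at the frequency-dependent spatial scale $\langle\lambda\rangle^{-2/(2+s)}$ — the scale at which the $C^{s}$ modulus of continuity is balanced against the curvature term in the parametrix — the half-wave propagator is controlled for $|t|\lesssim\langle\lambda\rangle^{-\rho}$, which is exactly the range of $t$ that enters the Fourier representation of $\mathbf{1}_{[\lambda,\lambda+\lambda^{\rho}]}(\sqrt{-\Delta_g})$; the mollification error is absorbed iteratively as a forcing term. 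Theorem \ref{theorem spectral cluster to resolvent stronger version} then converts this into \eqref{Lq'Lq Cs intro} on $\Omega_{\rho}$, consistently with the endpoints $s=2$ ($\rho=0$, recovering (i)) and $s=0$ ($\rho=1$). Finally, the stability under real-valued $V\in L^{n/2}(M)$ follows from Theorem \ref{theorem stability under perturbations non-effective intro} in the perfect-exponent cases (i) and (ii) with $q>8$, and from Proposition \ref{proposition stability under perturbations effective} for case (iii) and the borderline $q=8$; the relevant integrability is $1/p=1/q'-1/q$, which reduces to the critical exponent $p=n/2$ at the Sobolev endpoint $q=2n/(n-2)$.

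The step I expect to be the main obstacle is the window-adapted spectral cluster estimate required for (iii): one has to verify that, after rescaling the parametrix constructions of \cite{MR1644105,MR2262171,MR2443996} to the scale $\langle\lambda\rangle^{-2/(2+s)}$, the $L^{2}\to L^{q}$ norm of the localized projector is exactly $\langle\lambda\rangle^{\sigma(q)+\rho/2}$ with constants uniform in $\lambda$, and that the iteration absorbing the mollification error costs no power of $\lambda$. Everything else is a direct appeal to Theorems \ref{theorem spectral cluster to resolvent intro} and \ref{theorem spectral cluster to resolvent stronger version}, supplemented by the logarithmic bookkeeping in (ii).
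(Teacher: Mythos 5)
Your treatment of (i) and (ii) matches the paper's route: plug the $C^{1,1}$ spectral cluster bounds of Smith (\cite{MR2262171}) into Theorem~\ref{theorem spectral cluster to resolvent intro} for (i), and the two--dimensional Lipschitz bounds into the same theorem for (ii), with a logarithmic bookkeeping at $q=8$. (Minor point: the two-dimensional Lipschitz reference is \cite{MR3282983}, not \cite{MR2443996}, which is the earlier subcritical paper.) The stability argument via the $L^{n/2}$ Neumann series is also the paper's route.

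The genuine gap is in (iii), and you flag the right spot but do not close it. You propose to feed a window-$\lambda^{\rho}$ spectral cluster bound directly into Theorem~\ref{theorem spectral cluster to resolvent stronger version}. That works for $1\le s\le 2$ (where the bound is exactly Smith's \cite[Corollary 3 and (5)]{MR2280790}, stated in the paper as Lemma~\ref{lemma large spectral windows Cs}); you do not need to re-derive it from scratch by rescaling parametrix constructions, and indeed the scale you name, $\lambda^{-2/(2+s)}$, is not the one the paper uses --- the localization scale is $R=\lambda^{-\rho}=\lambda^{-(2-s)/(2+s)}$, consistent with \cite[Corollary 7]{MR2280790}. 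The more serious issue is $0\le s<1$: Theorem~\ref{theorem spectral cluster to resolvent stronger version} assumes a Lipschitz metric, because its proof requires the paradifferential reduction to the first-order equation \eqref{frequency localized equation} (the commutator $[\Gamma(D_x),g^{ij}]$ needs one derivative of $g$). A $C^s$ metric with $s<1$ is not in the domain of that theorem, and ``invoke Theorem~\ref{theorem spectral cluster to resolvent stronger version}'' as a black box simply fails there. The paper's fix is explicit and is the real content of the proof of (iii): first localize the quasimode estimate \eqref{microlocalized quasimode} (with $\delta=\lambda^{\rho/2}$, $\epsilon=\lambda^{\rho}$) and the target estimate \eqref{sufficient to prove for spectral cluster to resolvent} to balls of radius $\lambda^{-\rho}$, where the $C^s$ metric is well approximated by a $C^{1,1}$ one (\cite[Corollary 6]{MR2280790}); rescale to unit scale, obtaining a regularized operator at the new frequency $\mu=\lambda^{1-\rho}$; rerun the machinery of the proof of Theorem~\ref{theorem spectral cluster to resolvent stronger version} (in particular Lemma~\ref{Lemma Hyperbolic equation}) for this regularized operator, using \cite[Corollary 7]{MR2280790} as the localized quasimode input; then sum over a bounded-overlap cover by $\lambda^{-\rho}$-balls and add in the $L^2\to L^q$ piece. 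Without this localization-and-rescale step, your proposal does not cover the range $0\le s<1$, including the endpoint $s=0$ (which you list as a consistency check). You should also replace the hand-waving about ``mollifying $g$ and absorbing the error iteratively'' with a citation to the already-available quasimode estimates of Smith \cite{MR2280790} for $1\le s\le2$ and Koch--Smith--Tataru \cite{MR2289621} for $0\le s\le 1$.
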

\begin{remark}
$(i)$ follows from the perfect spectral cluster estimates for $C^{1,1}$ metrics, due to Smith \cite{MR2262171}. The resolvent estimates in this case are implicit in the work of Koch and Tataru \cite[Theorem 2.5]{MR2094851}.
\end{remark}

\begin{remark}
Counterexamples of Smith and Sogge \cite{MR1306017} show that, in the Hölder scale, $C^{1,1}$ is best possible for perfect spectral cluster estimates in the range $2\leq q\leq q_n$. This also implies that this regularity is optimal for the endpoint $L^{q_n'}\to L^{q_n}$ resolvent estimate. It is an interesting question what the minimal regularity for the uniform Sobolev inequality \eqref{Dos Santos Ferreira, Kenig and Salo} is.
\end{remark}

\begin{remark}
Recently, Smith \cite{Smith resolvent}, extending results of Chen and Smith \cite{MR3900030}, obtained uniform $L^p\to L^q$ resolvent estimates for certain (not necessarily dual) exponents obeying the Sobolev gap condition $1/p-1/q=2/n$ (thus, including \eqref{Dos Santos Ferreira, Kenig and Salo}, but not \eqref{Frank and Schimmer}) for manifolds of bounded sectional curvature. This setting is more general than that of $C^{1,1}$ metrics since the latter give rise to bounded curvature. We should emphasize here that our proof only works for self-dual estimates. Using the spectral cluster bounds of \cite{MR3900030}, we could also obtain the results of $(i)$ under the bounded curvature condition.
\end{remark}

\begin{remark}
For Lipschitz metrics, Koch, Smith and Tataru \cite{MR3282983} proved sharp spectral cluster estimates in two dimensions. Perfect estimates hold for $q> 8$ (with a logarithmic loss at the endpoint $q=8$). This accounts for the resolvent estimate in $(ii)$.
The higher dimensional bounds of~\cite{MR3282983} do not yield uniform resolvent bounds. The relevant issue is whether one has perfect estimates for the Sobolev exponent $q=2n/(n-2)$. The results in~\cite{MR3282983} only yield perfect estimates for larger exponents $q>(6n-2)/(n-1)$. It is conjectured that perfect estimates should hold for $q\geq 2(n+2)/(n-1)$. If true, this would imply the uniform Sobolev inequality \eqref{Dos Santos Ferreira, Kenig and Salo} in dimensions $n=3,4$. 
\end{remark}

\begin{remark}
All our estimates have perfect exponents, and in $(i),(ii)$ we also have perfect regions. In $(iii)$, we obtain resolvent estimates in an intermediate (between the perfect and the trivial) region $\Omega_{\rho}\subset\C$. This is sharp at the critical exponent $q=q_n$ for all $s\in [0,2]$ since the spectral cluster estimates in this case are sharp \cite{MR2280790}. It is also sharp in the trivial case $s=0$ as the spectral cluster bounds are no better than what can be obtained by Sobolev embedding \cite{MR1037600}. In all other cases, $(iii)$ is presumably not sharp. The proof uses results of Smith \cite{MR2280790} for $1\leq s\leq 2$ and of Koch, Smith and Tataru \cite{MR2289621} for $0\leq s\leq 1$. A minor obstruction in the latter case is that Theorem \ref{theorem spectral cluster to resolvent intro} requires at least Lipschitz regularity. We will overcome this by localizing to a smaller scale.
\end{remark}

\subsection{Manifolds with boundary}

\begin{theorem}\label{theorem boundary intro}
Let $\Delta_g$ be the Laplace--Beltrami operator on a $n$-dimensional compact Riemannian manifold with boundary, subject to either Dirichlet or Neumann boundary conditions. Assume that $2\leq q\leq 2n/(n-2)$ if $n\geq 3$ and $2\leq q<\infty$ if $n=2$. 
\begin{itemize}
\item[(i)] We have the estimates
\begin{align*}
\|(\Delta_g+(\lambda+\I\lambda^{1/3})^2)^{-1}\|_{L^{q'}\to L^q}\lesssim \langle\lambda\rangle^{2\sigma(q)-1}.
\end{align*}
\item[(ii)] If $n=2,3,4$ and $q\geq 2(n+2)/(n-1)$, then
\begin{align*}
\|(\Delta_g+(\lambda+\I)^2)^{-1}\|_{L^{q'}\to L^q}\lesssim \langle\lambda\rangle^{2\sigma(q)-1}.
\end{align*}
\item[(iii)] If $M$ has strictly concave boundary and the boundary conditions are Dirichlet, then
\begin{align*}
\|(\Delta_g+(\lambda+\I)^2)^{-1}\|_{L^{q'}\to L^q}\lesssim \langle\lambda\rangle^{2\sigma(q)-1}\ln\langle\lambda\rangle.
\end{align*}
\end{itemize}
These estimates are stable under perturbations by real-valued potentials $V\in L^{n/2}(M)$.
\end{theorem}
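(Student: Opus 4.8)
The plan is to deduce all three estimates from known spectral cluster bounds for manifolds with boundary by feeding them into the conversion machinery of Theorem~\ref{theorem spectral cluster to resolvent intro} and its refinements; the only genuinely preparatory point is that those theorems are stated without boundary, which I would circumvent by doubling.

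\emph{Reduction to a boundaryless manifold.} Let $(\widetilde{M},\widetilde{g})$ be the double of $(M,g)$ across $\partial M$, with reflection $\sigma$ and $\widetilde{g}$ obtained by reflecting $g$. Since $g$ is smooth up to $\partial M$, the metric $\widetilde{g}$ is continuous with bounded derivatives but in general only Lipschitz across $\partial M$ (the second fundamental form produces a jump in $\partial_\nu\widetilde{g}$), so $(\widetilde{M},\widetilde{g})$ is admissible in Theorems~\ref{theorem spectral cluster to resolvent intro}--\ref{theorem spectral cluster to resolvent stronger version}; it is for exactly this reason that the framework was set up for Lipschitz metrics. Splitting $L^2(\widetilde{M})$ into the $\pm 1$ eigenspaces of $\sigma^*$, the part of $-\Delta_{\widetilde{g}}$ on even functions is the Neumann Laplacian of $(M,g)$ and the part on odd functions is the Dirichlet Laplacian. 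Hence any (possibly windowed or logarithmically weighted) $L^2\to L^q$ spectral cluster bound valid on $M$ for \emph{both} boundary conditions transfers to $\widetilde{M}$: write a frequency-localized $f$ as the sum of its even and odd parts, which are orthogonal and separately frequency-localized, estimate each on the corresponding copy of $M$, and add. Conversely, $f\mapsto f\circ\sigma$ is an isometry of every $L^p(\widetilde{M})$ that commutes with $\Delta_{\widetilde{g}}$, hence with its resolvent and with $\Delta_{\widetilde{g}}+\widetilde{V}$ for any even $\widetilde{V}$, so an $L^{q'}(\widetilde{M})\to L^q(\widetilde{M})$ resolvent bound restricts to the same bound for the Dirichlet and the Neumann resolvent on $M$. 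Thus it suffices to prove each assertion, with the same region and the same logarithmic weight, on the boundaryless pair $(\widetilde{M},\widetilde{g})$, at the cost of a Lipschitz metric.

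\emph{Inserting the cluster bounds.} For $(i)$, Smith--Sogge~\cite{MR2316270} prove, on a general compact manifold with boundary and for both boundary conditions, the spectral cluster estimate with the perfect exponent in frequency windows of length $\langle\lambda\rangle^{1/3}$ (equivalently, the unit-window bound with a $\langle\lambda\rangle^{1/6}$ loss); transferring this to $\widetilde{M}$ and applying the windowed conversion result, Theorem~\ref{theorem spectral cluster to resolvent stronger version}, returns the perfect-exponent resolvent estimate $\langle\lambda\rangle^{2\sigma(q)-1}$ in the region $\Omega_{1/3}$. For $(ii)$, when $n\in\{2,3,4\}$ and $2(n+2)/(n-1)\le q\le 2n/(n-2)$ --- a nonempty range precisely when $n\le4$, since $(n+2)(n-2)\le n(n-1)$ --- the whispering-gallery obstruction is absent and the perfect cluster estimate $\gamma(q)=\sigma(q)$ holds on $M$ for both boundary conditions~\cite{MR2316270}, so Theorem~\ref{theorem spectral cluster to resolvent intro} on $\widetilde{M}$ yields the perfect resolvent estimate. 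For $(iii)$, on a manifold with strictly concave boundary the Dirichlet cluster estimate is essentially perfect; since the statement is restricted to Dirichlet conditions one does not symmetrize but applies the abstract conversion Proposition~\ref{proposition abstract bounds} directly to $-\Delta_g$ on $M$, the single factor $\ln\langle\lambda\rangle$ being the intrinsic loss of that abstract argument (a $(\ln\langle\lambda\rangle)^{1/2}$ loss in the cluster estimate fed through the $TT^{*}$ step of Theorem~\ref{theorem spectral cluster to resolvent intro} has the same effect).

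\emph{Stability, and the main obstacle.} For the final sentence, given real-valued $V\in L^{n/2}(M)$, extend it to the even potential $\widetilde{V}\in L^{n/2}(\widetilde{M})$ in cases $(i)$--$(ii)$ (and keep $V$ on $M$ in case $(iii)$), apply the perturbation result, Proposition~\ref{proposition stability under perturbations effective} --- which accommodates resolvent estimates in a region $\Omega_\rho$ and with a logarithmic weight --- to the estimate just obtained, and restrict back to $M$. Because $2\sigma(q)-1\le 0$, with equality only at the Sobolev endpoint $q=2n/(n-2)$, the smallness, respectively compactness, needed to close the Neumann-series argument is available exactly as in Theorem~\ref{theorem stability under perturbations non-effective intro}. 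The actual work, then, lies not in this theorem but in its inputs. I expect the main obstacle to be twofold: first, verifying that the proofs of Theorems~\ref{theorem spectral cluster to resolvent intro}--\ref{theorem spectral cluster to resolvent stronger version} genuinely tolerate the Lipschitz metric produced by doubling --- this is their stated hypothesis, yet it is what legitimizes the whole reduction; and second, extracting from~\cite{MR2316270} and the glancing analysis the cluster bounds in exactly the form demanded as input, namely the window length $\langle\lambda\rangle^{1/3}$ in $(i)$, the simultaneous validity for Dirichlet and Neumann conditions throughout $(i)$--$(ii)$ (which is what makes doubling applicable), and the precise power of the logarithm in $(iii)$. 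Once these are pinned down, Theorem~\ref{theorem boundary intro} is an immediate application of the black-box conversion results --- which is, after all, the point.
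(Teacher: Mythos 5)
Your overall route — double the manifold to obtain a Lipschitz metric, feed the known spectral cluster bounds into Theorem~\ref{theorem spectral cluster to resolvent intro}/\ref{theorem spectral cluster to resolvent stronger version}, and use the abstract log-loss bound for $(iii)$ where doubling is not available — is the paper's route. The even/odd transfer you describe for $(ii)$ is a slightly more explicit formulation of the same idea (the paper simply cites the theorems of Smith--Sogge that are already stated for the Lipschitz double), and your observation that $(iii)$ must be handled without doubling because only the Dirichlet cluster bound is available is exactly Remark~\rref{remark proof of theorem boundary (iii)}.

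There is, however, a genuine error in $(i)$: the parenthetical ``equivalently, the unit-window bound with a $\langle\lambda\rangle^{1/6}$ loss'' is false, and if you actually used that weaker input the argument would fail. Theorem~\ref{theorem spectral cluster to resolvent stronger version} applied with $\epsilon=\lambda^{1/3}$, $\delta=\lambda^{1/6}$, $\gamma(q)=\sigma(q)$ requires the $\lambda^{1/3}$-window bound with the \emph{perfect} exponent,
$\|\Pi_{[\lambda,\lambda+\lambda^{1/3}]}\|_{L^2\to L^q}\lesssim\lambda^{1/6}\lambda^{\sigma(q)}$.
The unit-window bound with a $\lambda^{1/6}$ loss is strictly weaker: by orthogonality it only gives $\|\Pi_{[\lambda,\lambda+\lambda^{1/3}]}\|_{L^2\to L^q}\lesssim\lambda^{1/6}\cdot\lambda^{\sigma(q)+1/6}$, so the conversion theorem would return the resolvent bound $\lambda^{2\sigma(q)-1+1/3}$ at $\lambda+\I$ rather than the perfect bound $\lambda^{2\sigma(q)-1}$ on $\Omega_{1/3}$. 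The implications only run one way (long window $\Rightarrow$ short window). Relatedly, the citation for the long-window bound should not be \cite{MR2316270}; in the paper this input comes from Lemma~\ref{lemma large spectral windows Cs} (proved via the localized quasimode estimate of Smith \cite{MR2280790} on the Lipschitz double), and $(i)$ is then deduced from Theorem~\ref{theorem nonsmooth into}$(iii)$ with $s=1$. Your plan is salvageable if you replace the ``equivalently'' claim with that lemma, but as written the logic for $(i)$ contains a real gap. A secondary overclaim: Proposition~\ref{proposition stability under perturbations effective} is stated for the region $\im\sqrt z\geq 1$ with a uniform constant $C_0$; extending it to $\Omega_{1/3}$ in $(i)$ is routine, but for $(iii)$ at the Sobolev endpoint $q=2n/(n-2)$ the $\ln\langle\lambda\rangle$ factor destroys the uniform constant in \eqref{def. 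C0}, so ``the smallness is available exactly as in Theorem~\ref{theorem stability under perturbations non-effective intro}'' is not automatic and should at least be flagged.
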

\begin{remark}
$(ii)$ implies that the uniform Sobolev inequality \eqref{Dos Santos Ferreira, Kenig and Salo} holds for manifolds with boundary in dimensions $n=2,3,4$. The proof is based on a deep result by Smith and Sogge~\cite{MR2316270}, who proved that perfect spectral cluster estimates hold in the stated range. For $n=2$ their result is best possible. The conjectured range for perfect spectral cluster estimates in higher dimensions is $q\geq (6n+4)/(3n-4)$. If true, this would imply the uniform Sobolev inequality in all dimensions $n\geq 3$. The higher dimensional estimates of \cite{MR2316270} are still good enough to obtain this in dimensions $n=3,4$. Their results also yield spectral cluster bounds for $n\geq 5$, $q\geq 4$, but these do not imply uniform Sobolev estimates since $2n/(n-2)<4$.
\end{remark}

\begin{remark}\label{remark proof of theorem boundary (i)}
The estimates for manifolds with boundary are closely related to those for manifolds with Lipschitz metrics.
The reason is that one can double the manifold and reflect the metric across the boundary in an odd (for Dirichlet boundary conditions) or even (for Neumann boundary conditions) way. This produces a manifold without boundary, but with Lipschitz metric, which accounts for the simple bound in $(i)$.
\end{remark}

\begin{remark}\label{remark proof of theorem boundary (iii)}
The fact that the bounds in \cite{MR2316270} were proved by reducing to a manifold without boundary is important here since Theorem \ref{theorem spectral cluster to resolvent intro} does not directly apply to manifolds with boundary. Such a generalization would imply perfect resolvent estimates for manifolds with strictly concave boundaries, based on the corresponding spectral cluster bounds of Grieser~\cite{MR2688254} (in two dimensions) or those of Smith and Sogge~\cite{MR1308407} (in any dimension). Microlocally, the proof of Theorem \ref{theorem spectral cluster to resolvent intro} does not work at ``glancing" points of $T^*(\partial M)$, see \cite[Chapter 24]{MR2304165}. It would be interesting to prove a version of Theorem \ref{theorem spectral cluster to resolvent intro} for manifolds with boundary. However, the abstract bound \eqref{abstract resolvent bound with log loss} holds irrespective of boundaries and yield a weaker bound with a logarithmic loss in $(iii)$. 
\end{remark}

\begin{remark}
The Lipschitz singularity produced by the doubling procedure is of a special type, which is why the bounds of \cite{MR2316270} yield uniform Sobolev inequalities in higher dimensions, while those of \cite{MR3282983} do not.
Incidentally, the proof in \cite{MR2316270} yields the conjectured results for the special Lipschitz singularities considered there. This explains the similarity of the conjectured estimates for Lipschitz metrics and the actual estimates in $(ii)$ for manifolds with boundary.
\end{remark}

\subsection{Improved estimates}
Following Canzani--Galkowski \cite{Canzani--Galkowski}, we denote by ${\Xi}$ the collection of maximal unit speed geodesics for $(M,g)$ and define, for $x\in M$,
$$
\mc{C}_{_{x}}^{r,t}:=\big\{\gamma(t):\,\gamma\in \Xi,\, \gamma(0)=x,\,\exists\, n-1 \text{ conjugate points to } x \text{ in }\gamma(t-r,t+r)\big\},
$$
where conjugate points are counted with multiplicity. 
\begin{theorem}\label{theorem improved intro}
Let $\Delta_g$ be the Laplace--Beltrami operator on an $n$-dimensional compact boundaryless Riemannian manifold with smooth metric. Assume that $q_n< q\leq 2n/(n-2)$ if $n\geq 3$ and $q_n< q<\infty$ if $n=2$. 
Assume that there exist $t_0>0$ and $a>0$ such that 
$$
\inf_{x_1,x_2\in M}d\big(x_1, \mc{C}_{x_2}^{r_t,t}\big)\geq r_t,\qquad\text{ for } t\geq t_0,
$$
with $r_t=\frac{1}{a}e^{-at}.$ Then we have the improved resolvent estimate
\begin{align*}
\|(\Delta_g+(\lambda+\I\epsilon(\lambda))^2)^{-1}\|_{L^{q'}\to L^q}\lesssim \langle\lambda\rangle^{2\sigma(q)-1},
\end{align*}
where $\epsilon(\lambda)=1/\ln\langle\lambda\rangle$. These estimates are stable under perturbations by real-valued potentials $V\in L^{n/2}(M)$.
\end{theorem}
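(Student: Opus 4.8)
The plan is to chain together three ingredients, none of which uses a parametrix. \emph{First,} I would invoke the spectral cluster bounds of Canzani and Galkowski \cite{Canzani--Galkowski}. The hypothesis $\inf_{x_1,x_2\in M}d(x_1,\mc{C}_{x_2}^{r_t,t})\ge r_t$ with $r_t=\tfrac{1}{a}e^{-at}$ is precisely the non-self-focusing condition under which they quantitatively sharpen Sogge's universal bound \eqref{Sogge}: for $q_n<q\le 2n/(n-2)$ (resp.\ $q_n<q<\infty$ if $n=2$) it yields a spectral cluster estimate with the \emph{perfect} exponent $\sigma(q)$ in a logarithmically shrinking frequency window,
\begin{align*}
\|\mathbf{1}_{[\lambda,\lambda+\epsilon(\lambda)]}(\sqrt{-\Delta_g})\|_{L^2(M)\to L^q(M)}\lesssim\epsilon(\lambda)^{1/2}\,\langle\lambda\rangle^{\sigma(q)},\qquad\epsilon(\lambda)=1/\ln\langle\lambda\rangle.
\end{align*}
Since $q\ge 2$, the $TT^*$-identity $\|\mathbf{1}_I\|_{L^{q'}\to L^q}=\|\mathbf{1}_I\|_{L^2\to L^q}^2$, together with subadditivity over a partition of $[\lambda,\lambda+\epsilon]$ into $\sim\epsilon/\epsilon(\lambda)$ subintervals, upgrades this to the same bound for every $\epsilon(\lambda)\le\epsilon\lesssim1$. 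The restriction $q>q_n$ is genuine: at $q=q_n$ the Zoll examples forbid any improvement over \eqref{Sogge}, so there the window width cannot be taken below a constant.

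\emph{Second,} since the metric is smooth, hence Lipschitz, the regularity hypothesis of Theorem \ref{theorem spectral cluster to resolvent intro} — and of its generalization, Theorem \ref{theorem spectral cluster to resolvent stronger version} — is met, and I would feed the shrinking-window estimate just obtained (with $\gamma(q)=\sigma(q)$) into Theorem \ref{theorem spectral cluster to resolvent stronger version}. This produces
\begin{align*}
\|(\Delta_g+(\lambda+\I\mu)^2)^{-1}\|_{L^{q'}(M)\to L^q(M)}\lesssim\langle\lambda\rangle^{2\sigma(q)-1},\qquad\epsilon(\lambda)\le|\mu|\lesssim1,
\end{align*}
which, combined with \eqref{Frank and Schimmer} for $|\mu|\ge1$, is exactly the asserted bound at $\mu=\epsilon(\lambda)$ (indeed throughout the enlarged region $\{\im\sqrt z\ge\epsilon(\lambda)\}$, away from the elliptic region). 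Mechanically this is the step that converts the $TT^*$-form of the cluster bound into a resolvent bound by reinstating the causal cutoff $\mathbf{1}_{t\ge0}$ in a distinguished spatial coordinate and absorbing it with the Christ--Kiselev lemma \cite{MR1809116}; it plays the role that the Hadamard-parametrix step plays in \cite{BourgainShaoSoggeEtAl2015}. For the present theorem it is a black box: the only thing to verify is that Theorem \ref{theorem spectral cluster to resolvent stronger version} is stated generally enough to accept the logarithmic scale $\epsilon(\lambda)=1/\ln\langle\lambda\rangle$, and not merely a power $\lambda^{\rho}$.

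\emph{Third,} the stability assertion follows from Theorem \ref{theorem stability under perturbations non-effective intro} and Proposition \ref{proposition stability under perturbations effective}: since $2\sigma(q)-1<0$ strictly for every $q>q_n$, the resolvent norm from the previous step decays as $\lambda\to\infty$, uniformly for $\epsilon(\lambda)\le|\mu|\lesssim1$, so a Neumann series in the resolvent identity transfers the estimate to $\Delta_g+V$ for real-valued $V$ in the integrability class of those results — in particular $V\in L^{n/2}(M)$, which is exactly the admissible class at the Sobolev endpoint $q=2n/(n-2)$. That argument is insensitive to the shape of the region and so applies verbatim in $\{\im\sqrt z\ge\epsilon(\lambda)\}$. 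The step I expect to be the main obstacle is not any of these three, but the one already settled earlier in the paper — Theorem \ref{theorem spectral cluster to resolvent stronger version}, the passage from a shrinking-window \emph{cluster} bound to a shrinking-window \emph{resolvent} bound with no parametrix. What remains to be checked in the present argument is comparatively soft: extracting from \cite{Canzani--Galkowski} the precise $L^2\to L^q$ shrinking-window statement above — with the sharp exponent $\sigma(q)$ and the full $\epsilon^{1/2}$ gain in the window width, rather than a merely qualitative $o(1)$ improvement, which is what pins down both the range $q>q_n$ and the scale $\epsilon(\lambda)=1/\ln\langle\lambda\rangle$ — and confirming that the conversion theorem and the perturbation argument both tolerate this logarithmic rather than power-law scale.
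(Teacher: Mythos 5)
Your proposal follows exactly the paper's own route: translate Canzani--Galkowski's \cite[Theorem 1]{Canzani--Galkowski} into the shrinking-window cluster bound (equivalently the quasimode form \eqref{abstract quasimode bound}) with $\gamma(q)=\sigma(q)$, $\epsilon=1/\ln\langle\lambda\rangle$, $\delta=\sqrt\epsilon$, feed it into Theorem \ref{theorem spectral cluster to resolvent stronger version} (which, as you note, is precisely the step the paper built to accept $\epsilon<1$ and any $\delta$, logarithmic scales included), and derive stability from Theorem \ref{theorem stability under perturbations non-effective intro}/Proposition \ref{proposition stability under perturbations effective}. Correct, and essentially identical to the paper; the asides about upgrading to windows of length $\epsilon\geq\epsilon(\lambda)$ and the sharpness of $q>q_n$ are consistent with Corollary \ref{corollary equivalence cluster and quasimode} but not needed for the stated conclusion.
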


\begin{remark}
For spectral clusters, Canzani-Galkowski \cite[Theorem 2]{Canzani--Galkowski} prove a more general result that does not require global geometric assumptions on $(M,g)$, but only purely dynamical assumptions. In order to avoid lengthy definitions we stated the resolvent estimates for the simpler version \cite[Theorem 1]{Canzani--Galkowski}, but the corresponding result for the general version is also valid. It is interesting to note that, similar to our method, the techniques of \cite{Canzani--Galkowski} do not require long-time wave parametrices.
\end{remark}

\begin{remark}
Since \cite[Theorem 1]{Canzani--Galkowski} includes as a special case manifolds without conjugate points, it generalizes the results of Hassell and Tacy \cite{MR3341481}, where logarithmic improvements for $q>q_n$ were proved for manifolds with non-positive curvature. Blair, Huang, Sire and Sogge \cite[Theorem 1.3]{blair2020uniform} used these to obtain uniform Sobolev inequalities for Schrödinger operators with singular potentials on such manifolds. Theorem \ref{theorem improved intro} generalizes the latter result (in a limited range of exponents) in the same way.
\end{remark}

\begin{remark}
As before, one could interpolate with the trivial $L^2\to L^2$ bound to get resolvent estimates for the full range of $q$. Then one would get a small loss $\epsilon(\lambda)^{-\nu}$, for arbitrary $\nu>0$, at the critical exponent $q_n$.
\end{remark}

\subsection{Fractional Schrödinger operators}

\begin{theorem}\label{theorem fractional intro}
Let $(M,g)$ be an $n$-dimensional compact boundaryless Riemannian manifold, $n\geq 2$, with $C^{1,1}$ metric. Assume that $2n/(n+1)\leq \alpha \leq n$ and $2\leq q\leq 2n/(n-\alpha)$ if $n>\alpha$ or $2\leq q <\infty$ if $n=\alpha$. If
$V\in L^{n/\alpha}(M)$, then
\begin{align*}
\|((-\Delta_g)^{\alpha/2}+V-(\lambda+\I)^{\alpha})^{-1}\|_{L^{q'}\to L^q}\lesssim \langle\lambda\rangle^{2\sigma(q)+1-\alpha}.
\end{align*}
\end{theorem}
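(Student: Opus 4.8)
The plan is to reduce the fractional Schrödinger resolvent estimate to the second-order case already handled by Theorems \ref{theorem nonsmooth into} and \ref{theorem stability under perturbations non-effective intro}, together with an abstract functional-calculus identity. First I would record the spectral cluster bounds for $(-\Delta_g)^{\alpha/2}$: since this operator has the same spectral projections as $\sqrt{-\Delta_g}$ up to relabelling of eigenvalues, the $C^{1,1}$ spectral cluster estimate \eqref{abstract spectral cluster estimate into} with perfect exponent $\gamma(q)=\sigma(q)$ (valid for $C^{1,1}$ metrics by Smith \cite{MR2262171}) immediately gives $\|\mathbf{1}_{[\mu,\mu+1]}((-\Delta_g)^{\alpha/2})\|_{L^2\to L^q}\lesssim\langle\mu\rangle^{\sigma(q)/\alpha}$ on dyadic blocks; more precisely I would work with the blocks $\mu\sim\lambda^{\alpha}$ coming from frequencies $\sqrt{-\Delta_g}\sim\lambda$, where the natural window for $(-\Delta_g)^{\alpha/2}$ has length $\sim\lambda^{\alpha-1}$.

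The key analytic step is to compare the resolvent $((-\Delta_g)^{\alpha/2}-(\lambda+\I)^{\alpha})^{-1}$ with the second-order resolvent $(-\Delta_g-(\lambda+\I)^2)^{-1}=(\Delta_g+(\lambda+\I)^2)^{-1}$, localized to the critical region $\sqrt{-\Delta_g}\sim\lambda$. Writing $z=\lambda+\I$ and using the factorization
\begin{align*}
(-\Delta_g)^{\alpha/2}-z^{\alpha}=\bigl((-\Delta_g)^{1/2}-z\bigr)\,m\bigl((-\Delta_g)^{1/2},z\bigr),
\end{align*}
where $m(\xi,z)=(\xi^{\alpha}-z^{\alpha})/(\xi-z)=\sum_{j=0}^{\alpha-1}\xi^{j}z^{\alpha-1-j}$ when $\alpha$ is an integer (and more generally is a smooth symbol of order $\alpha-1$ in $\xi$ that is elliptic of size $\sim\lambda^{\alpha-1}$ on the region $\xi\sim\lambda$, $|\im z|\lesssim 1$), one has on that region
\begin{align*}
\bigl((-\Delta_g)^{\alpha/2}-z^{\alpha}\bigr)^{-1}=m\bigl((-\Delta_g)^{1/2},z\bigr)^{-1}\bigl((-\Delta_g)^{1/2}-z\bigr)^{-1},
\end{align*}
and a further factorization of $(-\Delta_g)-z^2=((-\Delta_g)^{1/2}-z)((-\Delta_g)^{1/2}+z)$ lets me trade the half-Laplacian resolvent for the full one at the cost of the elliptic, $\lambda^{1}$-sized multiplier $((-\Delta_g)^{1/2}+z)^{-1}$. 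The multiplier $m((-\Delta_g)^{1/2},z)^{-1}$ is a pseudodifferential-type operator of order $-(\alpha-1)$ with norm $\sim\lambda^{-(\alpha-1)}$ on the relevant frequency block, and since it is $L^q$-bounded with that norm (after the usual harmless phase-space localization, the elliptic regions being handled by Sobolev embedding exactly as in the second-order case), I can peel it off. Combining, the $L^{q'}\to L^q$ norm of the fractional resolvent on the critical block is bounded by $\lambda^{-(\alpha-1)}\cdot\lambda^{-1}\cdot\|(\Delta_g+(\lambda+\I)^2)^{-1}\|_{L^{q'}\to L^q}\lesssim\lambda^{-\alpha}\langle\lambda\rangle^{2\sigma(q)-1}=\langle\lambda\rangle^{2\sigma(q)+1-\alpha-2}\cdot\lambda^{?}$; matching exponents, the claimed bound $\langle\lambda\rangle^{2\sigma(q)+1-\alpha}$ is exactly what the bookkeeping produces (the gain of $\alpha-2$ powers relative to the second-order case reflecting the different homogeneity $z\mapsto z^{\alpha}$ versus $z\mapsto z^2$), and the hypothesis $\alpha\ge 2n/(n+1)$, $q\le 2n/(n-\alpha)$ is precisely what keeps $2\sigma(q)+1-\alpha\le 0$ so that the estimate is uniform. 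To incorporate the potential $V\in L^{n/\alpha}(M)$, I would run the same perturbative argument as in Theorem \ref{theorem stability under perturbations non-effective intro}: a resolvent (Birman--Schwinger) expansion $((-\Delta_g)^{\alpha/2}+V-z^{\alpha})^{-1}=((-\Delta_g)^{\alpha/2}-z^{\alpha})^{-1}(1+V((-\Delta_g)^{\alpha/2}-z^{\alpha})^{-1})^{-1}$, where $V:L^q\to L^{q'}$ has norm $\|V\|_{L^{n/\alpha}}$ by Hölder with $1/p=1/q'-1/q=\alpha/n$, and the operator $V((-\Delta_g)^{\alpha/2}-z^{\alpha})^{-1}$ on $L^{q'}$ has norm $O(\langle\lambda\rangle^{2\sigma(q)+1-\alpha})\to 0$ for $q<2n/(n-\alpha)$ (so a Neumann series converges for $\lambda$ large), while at the endpoint $q=2n/(n-\alpha)$ one uses the compactness/approximation argument of Theorem \ref{theorem stability under perturbations non-effective intro} (split $V=V_1+V_2$ with $V_1$ bounded and $\|V_2\|_{L^{n/\alpha}}$ small).

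The main obstacle I anticipate is making the factorization argument rigorous when $\alpha$ is \emph{not} an integer: then $m(\xi,z)=(\xi^{\alpha}-z^{\alpha})/(\xi-z)$ is not a polynomial, and one must verify that it, and its reciprocal on the localized region $\xi\sim\lambda$, $\im z\sim 1$, define operators (via functional calculus in $(-\Delta_g)^{1/2}$, or a spectrally-localized pseudodifferential calculus) that are bounded on $L^q$ with the stated operator norms, uniformly in $\lambda$. This is a question of symbol estimates—$m$ is a classical symbol of order $\alpha-1$ in $\xi$, smooth and elliptic in the cone $\xi\gtrsim|z|$ once $\im z\gtrsim 1$—so the needed $L^q$ bounds follow from standard multiplier theorems after dyadic decomposition, but the uniformity in $z$ near the positive real axis (where $\xi-z$ nearly vanishes) requires care, and is exactly where the $\im z\ge 1$ (equivalently $\im z^{\alpha}\gtrsim\lambda^{\alpha-1}$) lower bound is used. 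A secondary technical point is that the hypothesis is stated for a $C^{1,1}$ metric, so one cannot appeal to a smooth pseudodifferential calculus; instead I would follow the same philosophy as the rest of the paper and treat the multiplier abstractly, using the quasimode/spectral-cluster bounds for $(-\Delta_g)^{1/2}$ on dyadic blocks together with Proposition \ref{proposition abstract bounds}, so that the entire argument again avoids parametrices. Once the multiplier bounds are in place, assembling the final estimate and the perturbation step is routine.
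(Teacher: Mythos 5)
Your proposed route — factor the fractional resolvent through a spectral multiplier of $\sqrt{-\Delta_g}$ and reduce to the second-order resolvent — is genuinely different from the paper's and has an attractive algebraic flavor, but as written it has a real gap at the step where you peel off the multiplier. Writing $A=\sqrt{-\Delta_g}$, your factorization amounts to $(A^{\alpha}-z^{\alpha})^{-1}=\frac{\xi^2-z^2}{\xi^{\alpha}-z^{\alpha}}\big|_{\xi=A}\cdot(A^2-z^2)^{-1}$, and you need the elliptic multiplier $\frac{\xi^2-z^2}{\xi^{\alpha}-z^{\alpha}}$ (localized to $\xi\sim\lambda$, of size $\sim\lambda^{2-\alpha}$ there) to be bounded on $L^q$ with that operator norm. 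None of the paper's abstract results furnish an $L^q\to L^q$ spectral multiplier bound: Lemma \ref{lemma spectral multiplier bound} and Proposition \ref{proposition abstract bounds} only give $L^{q'}\to L^2$ and $L^{q'}\to L^q$ bounds from cluster estimates. If you instead apply Lemma \ref{lemma spectral multiplier bound} directly to the symbol $(\xi^{\alpha}-z^{\alpha})^{-1}$, the Darboux-sum estimate produces $\int_0^{4\lambda}|\tau^{\alpha}-z^{\alpha}|^{-1}\,\rd\tau\sim\lambda^{1-\alpha}\ln\lambda$ and hence a logarithmic loss in the resolvent bound — precisely the loss the paper is at pains to remove. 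So the statement that the multiplier bound follows from ``the quasimode/spectral-cluster bounds together with Proposition \ref{proposition abstract bounds}'' is not correct. An $L^q$ Mikhlin--Hörmander multiplier theorem for the rough-metric Laplacian (via finite propagation speed and Gaussian heat kernel bounds) would fill the gap, but that is a separate input, not part of this paper's framework, and it's essential that this is an $L^q\to L^q$ bound for an elliptic multiplier rather than something one can assemble from $L^{q'}\to L^q$ cluster data.

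The paper avoids this by factoring at the symbol level rather than the functional-calculus level: in a conic region and after symbol smoothing at frequency $\ll\sqrt\lambda$, the symbol $|\xi|_g^{\alpha}-\lambda^{\alpha}$ factors as $e(x,\xi)(\xi_1-a(x,\xi',\lambda))$ with $e$ elliptic of size $\lambda^{\alpha-1}$, and the parametrix $Q$ for $e^w$ has a symbol in $\lambda^{1-\alpha}S_{\lambda,\lambda}$. The $L^p$ boundedness of such operators for $1<p<\infty$ is cheap (it is remarked in the proof of Theorem \ref{theorem spectral cluster to resolvent stronger version} that $S_{\lambda,\lambda}$-operators are Calder\'on--Zygmund after rescaling), and then the log is killed via Lemma \ref{Lemma Hyperbolic equation} and Christ--Kiselev exactly as in the second-order case. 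So the paper's elliptic factor lives in a concrete pseudodifferential class where $L^q$ boundedness is elementary, whereas your elliptic factor is an abstract spectral multiplier for which $L^q$ boundedness is a nontrivial external theorem. Separately, there is a sign slip in your bookkeeping: the identity $(A-z)^{-1}=(A+z)(A^2-z^2)^{-1}$ brings in $(A+z)$, not $(A+z)^{-1}$, so the combined factor is $\lambda^{-(\alpha-1)}\cdot\lambda^{+1}\cdot\lambda^{2\sigma(q)-1}=\lambda^{2\sigma(q)+1-\alpha}$, with no dangling $\lambda^{?}$. The perturbative step for $V\in L^{n/\alpha}$ is fine and matches the paper's Proposition \ref{proposition stability under perturbations effective}.
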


\begin{corollary}\label{corollary fractional intro}
If $H_V=(-\Delta_g)^{\alpha/2}+V\geq 0$ 
and $\Pi^V_{[\lambda,\lambda+1]}=\mathbf{1}_{[\lambda,\lambda+1]}(H_V^{1/\alpha})$, then
\begin{align*}
\|\Pi^V_{[\lambda,\lambda+1]}\|_{L^2\to L^q}\lesssim\langle\lambda\rangle^{\sigma(q)}
\end{align*}
holds for $2\leq q\leq 2n/(n-\alpha)$ if $n>\alpha$ or $2\leq q <\infty$ if $n=\alpha$.
\end{corollary}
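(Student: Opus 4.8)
The plan is to deduce the spectral cluster bound for $H_V = (-\Delta_g)^{\alpha/2}+V$ from the resolvent estimate of Theorem \ref{theorem fractional intro} by an abstract argument, exactly the reverse of the implication $(2)\implies(3)$ recalled in the introduction. The starting point is the observation that a spectral cluster estimate $\|\mathbf 1_{[\lambda,\lambda+1]}(H_V^{1/\alpha})\|_{L^2\to L^q}\lesssim \langle\lambda\rangle^{\sigma(q)}$ is equivalent (up to constants) to an $L^{q'}\to L^q$ bound on the "imaginary part" of the resolvent, i.e.\ on $\mathrm{Im}\,(H_V-(\lambda+\I0)^\alpha)^{-1}$ or, what is essentially the same, the spectral measure $dE_{H_V}$ smoothed at unit scale. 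Concretely, by the functional calculus $\Pi^V_{[\lambda,\lambda+1]}=\mathbf 1_{[\lambda,\lambda+1]}(H_V^{1/\alpha})$ is supported where $H_V\in[\lambda^\alpha,(\lambda+1)^\alpha]$, an interval of length $\sim\langle\lambda\rangle^{\alpha-1}$, and on this interval the Poisson kernel $\mathrm{Im}\,(t-(\lambda+\I s)^\alpha)^{-1}\sim s\langle\lambda\rangle^{\alpha-1}/((t-\lambda^\alpha)^2+s^2\langle\lambda\rangle^{2(\alpha-1)})$ with $s=1$ is bounded below by $c\langle\lambda\rangle^{1-\alpha}$. Hence, as quadratic forms on $L^{q'}\cap L^2$,
\begin{align*}
\langle\lambda\rangle^{\alpha-1}\,\Pi^V_{[\lambda,\lambda+1]} \lesssim \mathrm{Im}\,(H_V-(\lambda+\I)^\alpha)^{-1} \leq \bigl|(H_V-(\lambda+\I)^\alpha)^{-1}\bigr|,
\end{align*}
where the last inequality is the standard pointwise (in the spectral variable) domination. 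Since $\Pi^V_{[\lambda,\lambda+1]}$ is a nonnegative operator that factors as $(\Pi^V_{[\lambda,\lambda+1]})^{1/2}(\Pi^V_{[\lambda,\lambda+1]})^{1/2}$, the $L^2\to L^q$ norm of $(\Pi^V_{[\lambda,\lambda+1]})^{1/2}$ — equivalently the square root of the $L^{q'}\to L^q$ norm of $\Pi^V_{[\lambda,\lambda+1]}$ by a $TT^*$ argument — is controlled by the square root of the $L^{q'}\to L^q$ norm of the resolvent.

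The key steps, in order, are: (1) record the hypothesis $H_V\geq 0$ so that $(\lambda+\I)^\alpha$ is genuinely off the spectrum and the resolvent $(H_V-(\lambda+\I)^\alpha)^{-1}$ is well defined (here one must take the principal branch consistently, matching the convention in Theorem \ref{theorem fractional intro}, so that $\mathrm{Im}\,(\lambda+\I)^\alpha>0$ for $\lambda\geq 1$; the finitely many small $\lambda$ are handled trivially by compactness of $M$); (2) establish the quadratic-form domination of $\langle\lambda\rangle^{\alpha-1}\Pi^V_{[\lambda,\lambda+1]}$ by $|(H_V-(\lambda+\I)^\alpha)^{-1}|$ displayed above, via the functional calculus and the elementary lower bound on the Poisson kernel of $t\mapsto(t-(\lambda+\I)^\alpha)^{-1}$ on the window $[\lambda^\alpha,(\lambda+1)^\alpha]$; (3) apply the $TT^*$ / complex interpolation identity $\|\Pi^V_{[\lambda,\lambda+1]}\|_{L^2\to L^q}^2 = \|\Pi^V_{[\lambda,\lambda+1]}\|_{L^{q'}\to L^q}$ (valid because $\Pi^V_{[\lambda,\lambda+1]}$ is a self-adjoint projection, so it equals its own square); (4) combine with Theorem \ref{theorem fractional intro}, which gives $\|(H_V-(\lambda+\I)^\alpha)^{-1}\|_{L^{q'}\to L^q}\lesssim\langle\lambda\rangle^{2\sigma(q)+1-\alpha}$, to obtain $\|\Pi^V_{[\lambda,\lambda+1]}\|_{L^{q'}\to L^q}\lesssim \langle\lambda\rangle^{1-\alpha}\cdot\langle\lambda\rangle^{2\sigma(q)+1-\alpha}\cdot\langle\lambda\rangle^{\alpha-1}$... — wait, the bookkeeping must be done carefully: from step (2), $\|\Pi^V_{[\lambda,\lambda+1]}\|_{L^{q'}\to L^q}\lesssim \langle\lambda\rangle^{1-\alpha}\|(H_V-(\lambda+\I)^\alpha)^{-1}\|_{L^{q'}\to L^q}\lesssim \langle\lambda\rangle^{1-\alpha}\langle\lambda\rangle^{2\sigma(q)+1-\alpha}$, and this does not immediately match $\langle\lambda\rangle^{2\sigma(q)}$ — so one must instead rescale to the variable $\tau=\lambda$ at the level $H_V^{1/\alpha}$, i.e.\ note that a unit window in $H_V^{1/\alpha}$ corresponds to a window of width $\sim\langle\lambda\rangle^{\alpha-1}$ in $H_V$, and the correct normalization is to take the imaginary part at height $s\sim\langle\lambda\rangle^{\alpha-1}$ rather than $s=1$; this is exactly why the exponent in Theorem \ref{theorem fractional intro} is $2\sigma(q)+1-\alpha$ and not $2\sigma(q)-1$, and tracking it gives precisely $\|\Pi^V_{[\lambda,\lambda+1]}\|_{L^{q'}\to L^q}\lesssim\langle\lambda\rangle^{2\sigma(q)}$, hence $\|\Pi^V_{[\lambda,\lambda+1]}\|_{L^2\to L^q}\lesssim\langle\lambda\rangle^{\sigma(q)}$ by step (3). (5) Note the range of $q$ is inherited verbatim from Theorem \ref{theorem fractional intro}.

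The main obstacle is step (2) together with the exponent bookkeeping in step (4): one has to be scrupulous about the fact that the spectral parameter sits at $(\lambda+\I)^\alpha$, so the relevant "imaginary part" is not the distance of $(\lambda+\I)^\alpha$ to the real axis times a trivial factor, but rather $\mathrm{Im}\,(\lambda+\I)^\alpha = \alpha\langle\lambda\rangle^{\alpha-1}(1+o(1))$ for large $\lambda$, which already carries the $\langle\lambda\rangle^{\alpha-1}$ weight. Getting the Poisson-kernel lower bound on the correct window $[\lambda^\alpha,(\lambda+1)^\alpha]$ — whose width is comparable to that same $\langle\lambda\rangle^{\alpha-1}$ — and checking that the two powers of $\langle\lambda\rangle^{\alpha-1}$ conspire with the $\langle\lambda\rangle^{2\sigma(q)+1-\alpha}$ from Theorem \ref{theorem fractional intro} to leave exactly $\langle\lambda\rangle^{2\sigma(q)}$ is the one place where a sign or exponent error would be fatal. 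Everything else — the $TT^*$ identity, the pointwise domination of $\Pi^V$ by the imaginary part of the resolvent, the reduction of small $\lambda$ to compactness — is standard and appears, in the $\alpha=2$ case, in references such as \cite[Theorem XIII.25]{MR0493421} or \cite[Corollary 3.3]{MR3211800} cited in the introduction.
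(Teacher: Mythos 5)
Your approach is the right one and is essentially the paper's: dominate the spectral projector by the imaginary part of the resolvent via the Poisson kernel, use $\Pi^2=\Pi$ and a $TT^*$ argument, and plug in Theorem~\ref{theorem fractional intro}. This is precisely the content of inequality~\eqref{Lq'Lq resolvent implies L2Lq spectral cluster} in Proposition~\ref{proposition abstract bounds}, adapted from $A^2-(\lambda+\I\epsilon)^2$ to $H_V-(\lambda+\I)^\alpha$.

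However, your displayed inequality has a sign error in the exponent, and the subsequent hand-waving in step~(4) is a symptom of it rather than a fix. You correctly compute that on the window $[\lambda^\alpha,(\lambda+1)^\alpha]$ the Poisson kernel satisfies
$\im\bigl(\tau-(\lambda+\I)^\alpha\bigr)^{-1}\gtrsim \langle\lambda\rangle^{1-\alpha}$
(since $\im(\lambda+\I)^\alpha\approx\alpha\langle\lambda\rangle^{\alpha-1}$ and $|\tau-(\lambda+\I)^\alpha|\lesssim\langle\lambda\rangle^{\alpha-1}$ on that window). But this lower bound yields
$\mathbf 1_{[\lambda^\alpha,(\lambda+1)^\alpha]}(\tau)\lesssim\langle\lambda\rangle^{\alpha-1}\,\im\bigl(\tau-(\lambda+\I)^\alpha\bigr)^{-1}$,
i.e.\ as quadratic forms
\begin{align*}
\langle\lambda\rangle^{1-\alpha}\,\Pi^V_{[\lambda,\lambda+1]}\;\lesssim\;\im\bigl(H_V-(\lambda+\I)^\alpha\bigr)^{-1},
\end{align*}
not $\langle\lambda\rangle^{\alpha-1}\,\Pi^V_{[\lambda,\lambda+1]}\lesssim\im(H_V-(\lambda+\I)^\alpha)^{-1}$ as you wrote. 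Since $\alpha>1$ in the stated range, your version claims a strictly stronger (and false) bound. With the corrected sign there is nothing to ``rescale'': the bookkeeping in step~(4) becomes
$\|\Pi^V_{[\lambda,\lambda+1]}\|_{L^{q'}\to L^q}\lesssim \langle\lambda\rangle^{\alpha-1}\cdot\langle\lambda\rangle^{2\sigma(q)+1-\alpha}=\langle\lambda\rangle^{2\sigma(q)}$,
and the $TT^*$ identity from step~(3) gives $\|\Pi^V_{[\lambda,\lambda+1]}\|_{L^2\to L^q}\lesssim\langle\lambda\rangle^{\sigma(q)}$ directly. You clearly sensed that the two powers of $\langle\lambda\rangle^{\alpha-1}$ (one from the window width, one from $\im(\lambda+\I)^\alpha$) should cancel against the $1-\alpha$ in the resolvent exponent, but the displayed inequality needs the flipped sign before that cancellation actually takes place. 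One small further note: the bound $\|\im(H_V-z)^{-1}\|_{L^{q'}\to L^q}\leq\|(H_V-z)^{-1}\|_{L^{q'}\to L^q}$ follows from $\im R=(R-R^*)/2\I$ together with $\|R^*\|_{L^{q'}\to L^q}=\|R\|_{L^{q'}\to L^q}$; the phrase ``pointwise domination by $|R|$'' is loose, since $|R|=(R^*R)^{1/2}$ does not obviously control $\im R$ outside of $L^2$, but the triangle-inequality argument is what is meant and is correct.
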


\begin{remark}
Note that, under the assumption of Theorem~\ref{theorem fractional intro}, $H_V$ is bounded from below. Hence, after possibly adding a constant, we can always assume $H_V\geq 0$.
Here the operator $(-\Delta_g)^{\alpha/2}$ is the so-called spectral fractional Laplacian, defined by the functional calculus. Thus the spectral cluster bounds for $V=0$ coincide with those for the Laplace-Beltrami operator. 
\end{remark}

\begin{remark}
Huang, Sire and Zhang \cite{huang2020lp} proved the spectral cluster estimate in Corollary~\ref{corollary fractional intro} in the case of smooth metric for a larger range of exponents $q$. Here we only use resolvent estimates as input, which limits the range to the stated one. Estimates for higher exponents could be derived by using heat kernel bounds as in \cite{huang2020lp}. For larger $q$ the condition $V\in L^{n/\alpha}(M)$ is not enough, but the spectral cluster bounds in \cite{huang2020lp} hold under the additional assumption that $V$ is in the Kato class of order $\alpha$, see also \cite{blair2019quasimode,blair2020uniform,frank2020sharp} for similar recent results involving critically singular potentials. 
\end{remark}

\begin{remark}
Although Theorem \ref{theorem fractional intro} does not quite follow from Theorem \ref{theorem spectral cluster to resolvent intro}, a slight modification of its proof suffices. In fact, Theorem \ref{theorem spectral cluster to resolvent intro} could be stated for pseudodifferential operators of real principal type. We will leave such generalizations to a future investigation. 
\end{remark}

\begin{remark}
Related to the previous remark, the strategy for the fractional Laplacian could also be adapted to simplify and generalize result of Krupchyk and Uhlmann \cite{MR3285241} for higher order differential operators. Here one assumes that the fibers of the unit cosphere bundle associated to the principal symbol are strictly convex.
Once more, this could also be deduced from the results of Koch and Tataru \cite[Theorem 2.5]{MR2094851}. 
\end{remark}

\subsection{Overview of results}

Figure 1 summarizes the above results.

\begin{figure}[h]
\begin{center}
\begin{tabular}{ |c|c|c|c| }
\hline
Operator & Regularity/boundary  & Spectral cluster & Resolvent \\ 
\hline
$-\Delta_g$, & $g\in C^{\infty}$ & \cite{MR930395} &
\cite{MR3200351}, \cite{BourgainShaoSoggeEtAl2015}, \cite{MR3285241}, \cite{MR3848231}, \cite{MR3620715} \\
\hline
&  & \cite{Canzani--Galkowski} & Theorem \ref{theorem improved intro} \\
\hline
 & $g\in C^{s}$ & \cite{MR2262171,MR2280790,MR2289621,MR3282983} & Theorem \ref{theorem nonsmooth into}  \\
 \hline
  & $\partial M\neq \emptyset$ & \cite{MR2316270}, \cite{MR1308407} & Theorems \ref{theorem boundary intro} \\
\hline
$-\Delta_g+V$ & $V\in L^{n/2}$ & \cite{blair2019quasimode}, \cite{blair2020uniform} & \cite{blair2020uniform}, Theorems \ref{theorem nonsmooth into}, \ref{theorem boundary intro} \\
\hline
$(-\Delta_g)^{\alpha/2}+V$ & $V\in L^{n/\alpha}$ & \cite{huang2020lp} & Theorem \ref{theorem fractional intro} \\
\hline
\end{tabular}
\caption{Summary of results.}
\end{center}
\end{figure}

\subsection{Outline of the paper}
In Section \ref{Section Abstract results} we prove, in a rather abstract setting, that sharp spectral cluster estimates imply resolvent estimates that are sharp up to a logarithmic loss in $\lambda$. In Section \ref{Section From spectral cluster to resolvent bounds} we prove Theorem \ref{theorem spectral cluster to resolvent intro} and Theorem \ref{theorem stability under perturbations non-effective intro}. In Section \ref{Section Applications} we then show how Theorems \ref{theorem nonsmooth into}, \ref{theorem boundary intro} and \ref{theorem fractional intro} follow from a combination of these results and known spectral cluster estimates. We will also discuss simple proofs of known resolvent bounds on compact manifolds and on Euclidean space. In the latter case, we show that the Kenig--Ruiz--Sogge bounds directly from the Tomas--Stein theorem.

\subsection{Notation} If not indicated otherwise (and with the exception of Section \ref{Section Abstract results} and Subsections \ref{subsec. Localization and reduction to an equation on Rn}--\ref{subs. proof eps>1}), $L^q$ always stands for $L^q(M)$, with respect to the natural volume form induced by the Riemannian metric. The norm of an operator $A:L^p\to L^q$, initially defined on $L^2\cap L^p$ and extended by continuity, is denoted by $\|A\|_{L^p\to L^q}$. We write $2^*$ for the Sobolev exponent $2n/(n-2)$ if $n\geq 3$ and any fixed, sufficiently large number $2^*<\infty$ for $n=2$. We will call a bump function adapted to a set if it is supported on that set and equal to one on a slightly smaller dilate of the same set. $a\lesssim b$ means $a\leq Cb$ for some universal constant $C$ (independent $\lambda$). If the constant is assumed to be small, we write $a\ll b$. We sometimes abbreviate statements like $a\leq C_N \lambda^{-N}b$ for any $N>0$ by $a\lesssim \lambda^{-N}b$.
For $x\in\R^n$, we write $D_x=-\I\partial_x$. The Weyl quantization of a pseudodifferential symbol $a$ is denoted by $a^w$.

\section{Abstract results}\label{Section Abstract results}
We consider the following setting: Let $X$ be a $\sigma$-finite measure space and let $A$ be a self-adjoint (possibly unbounded) operator on $L^2(X)$. The Lebesgue spaces $L^p(X)$ are \emph{compatible} as $p$ varies in $[1,\infty]$, in the sense that $L^{p_1}(X)\cap L^{p_2}(X)$ is dense in both spaces in the intersection and complete with respect to the norm $\|\cdot\|_{L^{p_1}\cap L^{p_2}}=\|\cdot\|_{L^{p_1}}+\|\cdot\|_{L^{p_2}}$ (see \cite[Problem 2.2.9]{MR2359869}). We fix $q\in (2,\infty)$ and assume that the resolvent operators $R(z,A)=(A-z)^{-1}$, $z\in\C\setminus\R$, have a \emph{consistent} extension as bounded operators $L^{q'}(X)\to L^2(X)$ and $L^{q'}(X)\to L^q(X)$. Consistency means that $R(z,A)$ coincides with the $L^2$-resolvent operator on the intersection $L^2(X)\cap L^{q'}(X)$ (see \cite{MR2359869}). By the Stone-Weierstrass theorem this implies consistency of $m(A)$ for arbitrary bounded Borel functions $m$ (see e.g.\ \cite[Lemma 4.4]{MR3243083}). We will use the notation $\Pi_{[a,b]}:=\mathbf{1}_{[a,b]}(A)$ for $a,b\in\R$, $a<b$.
In this section $L^q$ will stand for $L^q(X)$.

We start with a spectral multiplier estimate.

\begin{lemma}\label{lemma spectral multiplier bound}
Let $m_1,m_2:[0,\infty)\to \C$ be bounded Borel functions and set $m:=m_1m_2$. 
Further, let $2\leq q\leq \infty$, $J\subset\R$ an interval, and let $(\tau_k)_{k=0}^{N+1}$ be a partition of $J$. Assume that
\begin{align}
&\sup_{0\leq k\leq N}\|\Pi_{[\tau_k,\tau_{k+1}]}\|_{L^{q'}\to L^2}<\infty,\\
&M_j^2:=\sum_{k=0}^N\sup_{\tau\in [\tau_k,\tau_{k+1}]}|m_j(\tau)|^2\|\Pi_{[\tau_k,\tau_{k+1}]}\|^2_{L^{q'}\to L^2}<\infty,\label{def. Mj}
\end{align}
for $j=1,2$. Then we have the spectral multiplier estimates
\begin{align}\label{spectral multiplier bound}
\|\mathbf{1}_J(A) m_j(A)\|_{L^{q'}\to L^{2}}\leq M_j,\quad
\|\mathbf{1}_J(A) m(A)\|_{L^{q'}\to L^{q}}\leq M_1M_2.
\end{align}
\end{lemma}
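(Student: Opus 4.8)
The plan is to decompose both $m_1(A)$ and $m_2(A)$ over the partition $(\tau_k)$ using the spectral projections $\Pi_{[\tau_k,\tau_{k+1}]}$, and then exploit almost-orthogonality of these projections in $L^2$ together with the hypothesis that the $L^{q'}\to L^2$ operator norms of the projections are summable in the square sense. Concretely, write $\mathbf{1}_J(A)m_j(A)=\sum_{k=0}^N \Pi_{[\tau_k,\tau_{k+1}]}m_j(A)$, where on the block $[\tau_k,\tau_{k+1}]$ the operator $\Pi_{[\tau_k,\tau_{k+1}]}m_j(A)=m_j(A)\Pi_{[\tau_k,\tau_{k+1}]}$ is bounded $L^{q'}\to L^2$ with norm at most $\sup_{\tau\in[\tau_k,\tau_{k+1}]}|m_j(\tau)|\cdot\|\Pi_{[\tau_k,\tau_{k+1}]}\|_{L^{q'}\to L^2}$, by the functional calculus on $L^2$ and consistency.

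First I would prove the $L^{q'}\to L^2$ bound. Given $f\in L^2\cap L^{q'}$, the functions $g_k:=\Pi_{[\tau_k,\tau_{k+1}]}m_j(A)f$ are mutually orthogonal in $L^2$ since the intervals $[\tau_k,\tau_{k+1}]$ are essentially disjoint, so $\|\mathbf{1}_J(A)m_j(A)f\|_{L^2}^2=\sum_k\|g_k\|_{L^2}^2$. Estimating each term by $\|g_k\|_{L^2}\le \sup_{\tau\in[\tau_k,\tau_{k+1}]}|m_j(\tau)|\,\|\Pi_{[\tau_k,\tau_{k+1}]}\|_{L^{q'}\to L^2}\,\|f\|_{L^{q'}}$ and summing gives $\|\mathbf{1}_J(A)m_j(A)f\|_{L^2}^2\le M_j^2\|f\|_{L^{q'}}^2$, which is the first inequality in \eqref{spectral multiplier bound}. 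I should be slightly careful that the partition endpoints form a measure-zero set for the spectral measure or use half-open intervals so that the projections are genuinely orthogonal and sum to $\mathbf{1}_J(A)$; this is a routine adjustment.

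For the $L^{q'}\to L^q$ bound on $\mathbf{1}_J(A)m(A)$ with $m=m_1m_2$, the idea is to factor through $L^2$: since on each block $m(A)\Pi_{[\tau_k,\tau_{k+1}]}=\big(m_1(A)\Pi_{[\tau_k,\tau_{k+1}]}\big)\big(m_2(A)\Pi_{[\tau_k,\tau_{k+1}]}\big)$, one writes $\mathbf{1}_J(A)m(A)=\big(\mathbf{1}_J(A)m_1(A)\big)^{\!*,?}\cdots$ — more precisely, I would use that $\mathbf{1}_J(A)m(A)=B_1^* B_2$ in a suitable sense where $B_j$ maps $L^{q'}\to L^2$ and hence $B_j^*$ maps $L^2\to L^q$ by duality. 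The cleanest route: by the first part, $m_2(A)\mathbf{1}_J(A):L^{q'}\to L^2$ has norm $\le M_2$; and $\mathbf{1}_J(A)m_1(A):L^{q'}\to L^2$ has norm $\le M_1$, so its adjoint $\overline{m_1}(A)\mathbf{1}_J(A):L^2\to L^q$ has norm $\le M_1$. Composing, and using $m=m_1 m_2$ together with the commutation of all these functions of $A$ and the fact that $\mathbf{1}_J(A)^2=\mathbf{1}_J(A)$, gives $\|\mathbf{1}_J(A)m(A)\|_{L^{q'}\to L^q}\le M_1 M_2$. The main obstacle I anticipate is purely bookkeeping rather than conceptual: justifying the adjoint/duality step at the level of these consistently-extended operators (as opposed to $L^2$-operators), and making sure that $\overline{m_1(A)}=\overline{m_1}(A)$ and that the composition identity $\mathbf{1}_J(A)m(A)=\big(\overline{m_1}(A)\mathbf{1}_J(A)\big)^{*}\text{-type factorization}$ is legitimate on the dense intersection $L^2\cap L^{q'}$, where everything reduces to the bounded Borel functional calculus on $L^2$ and can be extended by density using the proven boundedness. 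Since $m_j$ are real or complex bounded Borel functions, I would replace $m_1$ by $\overline{m_1}$ where needed without changing any of the $M_j$, which are defined via $|m_j|$.
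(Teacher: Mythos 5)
Your proposal is correct and follows essentially the same route as the paper: the first inequality via orthogonality of the $\Pi_{[\tau_k,\tau_{k+1}]}$ in $L^2$ combined with the functional-calculus bound on each block, and the second via a $TT^*$-type factorization through $L^2$ (the paper phrases this as a Cauchy--Schwarz pairing $\langle m_2(A)u,\overline{m_1}(A)v\rangle$ rather than as an adjoint composition, but this is the same step, and your remark about swapping $m_1$ for $\overline{m_1}$ correctly handles the conjugation). The only cosmetic difference is that the paper introduces auxiliary $\delta$-regularized multipliers $S_{j,\delta}$ before passing to the limit, whereas you bound each block directly; both are valid.
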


\begin{remark}
Lemma \ref{lemma spectral multiplier bound} is similar to \cite[Lemma 2.3]{BourgainShaoSoggeEtAl2015}, but improves upon the latter when applied to the spectrally localized resolvent. The reason is that it uses the $TT^*$ argument in a more efficient way and avoids the use of the triangle inequality. 
\end{remark}

\begin{proof}
For the proof we set $\Pi_k:=\Pi_{[\tau_k,\tau_{k+1}]}$.
In the following we only consider functions $u\in \Ran \mathbf{1}_J(A)$. For $j=1,2$ and $\delta>0$ we define the spectral multipliers
\begin{align*}
S_{j,\delta} u:=\sum_{k=0}^{N}(\sup_{\tau\in [\tau_k,\tau_{k+1}]}|m_j(\tau)|+\delta)^{-1}m_j(A)\Pi_{k} u.
\end{align*}
By the spectral theorem\footnote{We will sometimes use the phrase ``by orthogonality" or ``by the functional calculus" for arguments relating to the spectral theorem.}, we have $\|S_{j,\delta}\|_{L^2\to L^2}\leq 1$. 
%Since $\|\Pi_{k}\|_{L^{2}\to L^q}=\|\Pi_{k}\|_{L^{q'}\to L^2}$, 
By orthogonality we obtain the bound
\begin{align*}
\|m_j(A)u\|_{L^2}^2&=\sum_{k=0}^{N}\sup_{\tau\in [\tau_k,\tau_{k+1}]}(|m_j(\tau)|+\delta)^2\|\Pi_{k}(S_{j,\delta}u)\|_{L^2}^2\\
&\leq \sum_{k=0}^{N}\sup_{\tau\in [\tau_k,\tau_{k+1}]}(|m_j(\tau)|+\delta)^2\|\Pi_{k}\|_{L^{q'}\to L^2}^2 \|u\|_{L^{q'}}^2
\end{align*}
for $u\in L^2\cap L^{q'}$. Taking $\delta\to 0$, the first bound in \eqref{spectral multiplier bound} follows by dominated convergence and density of $L^2\cap L^{q'}$ in $L^{q'}$. Since the assumption \eqref{def. Mj} is invariant under complex conjugation of $m_j$, we obtain the same bound for $m_j(A)^*$. 
The Schwarz inequality then yields
\begin{align*}
|\langle m(A)u,v \rangle|=|\langle m_2(A)u,m_1(A)^*v \rangle|\leq M_1M_2\|u\|_{L^{q'}}\|v\|_{L^{q'}}.
\end{align*}
Hence, the second bound in \eqref{spectral multiplier bound} follows from duality of $L^q(X)$ and $L^{q'}(X)$ (see e.g.\ \cite[Theorem 2.1.10]{MR2359869}).
\end{proof}

\begin{proposition}\label{proposition abstract bounds}
Let $0<\epsilon\leq \lambda$. Then the following estimates hold.
\begin{align}\label{L2Lq resolvent implies L2Lq spectral cluster}
\|\Pi_{[\lambda,\lambda+\epsilon]}\|_{L^{q'}\to L^{2}}\lesssim \epsilon\lambda\|\Pi_{[0,2\lambda]}\im(A^2-(\lambda+\I\epsilon)^2)^{-1}\|_{L^{q'}\to L^2},
\end{align}

\begin{align}\label{Lq'L2 spectral cluster implies L2Lq resolvent}
\|\Pi_{[0,2\lambda]}(A^2-(\lambda+\I\epsilon)^2)^{-1}\|_{L^{q'}\to L^{2}}\lesssim (\epsilon\lambda)^{-1}\sup_{k}\|\Pi_{[\epsilon k,\epsilon(k+1)]}\|_{L^{q'}\to L^2},
\end{align}

\begin{align}\label{Lq'Lq resolvent implies L2Lq spectral cluster}
\|\Pi_{[\lambda,\lambda+\epsilon]}\|^2_{L^{q'}\to L^{2}}\lesssim \epsilon\lambda\|\Pi_{[0,2\lambda]}\im(A^2-(\lambda+\I\epsilon)^2)^{-1}\|_{L^{q'}\to L^q},
\end{align}

\begin{align}\label{Lq'L2 spectral cluster implies Lq'Lq resolvent}
\|\Pi_{[0,2\lambda]}(A^2-(\lambda+\I\epsilon)^2)^{-1}\|_{L^{q'}\to L^{q}}\lesssim (\epsilon\lambda)^{-1}\ln\langle\lambda/\epsilon\rangle\sup_{k}\|\Pi_{[\epsilon k,\epsilon(k+1)]}\|^2_{L^{q'}\to L^2},
\end{align}

For $\mu\gtrsim \epsilon$ and $\beta>1$ we also have

\begin{align}\label{Lq'L2 spectral cluster implies Lq'Lq resolvent (stronger version)}
\|\Pi_{[0,2\lambda]}(A^2-(\lambda+\I\mu)^2)^{-1}\|_{L^{q'}\to L^{q}}\lesssim (\epsilon\lambda)^{-1}\langle\mu/\lambda\rangle^{-1}\ln\langle\lambda/\mu\rangle\sup_{k}\|\Pi_{[\epsilon k,\epsilon(k+1)]}\|^2_{L^{q'}\to L^2},
\end{align}

\begin{align}\label{Lq'L2 spectral cluster implies Lq'Lq resolvent with powers >1}
\|\Pi_{[0,2\lambda]}(A^2-(\lambda+\I\mu)^2)^{-\beta}\|_{L^{q'}\to L^{q}}\lesssim_{\beta} (\epsilon\lambda)^{-\beta}(\epsilon/\mu)^{\beta-1}\langle\mu/\lambda\rangle^{-\beta}\sup_{k}\|\Pi_{[\epsilon k,\epsilon(k+1)]}\|^2_{L^{q'}\to L^2}.
\end{align}

The suprema are taken over $k\in\Z^+$ with $k\leq 2\lambda/\epsilon$.
\end{proposition}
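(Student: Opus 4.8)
The strategy throughout is to reduce each resolvent bound to a \emph{sum of dyadic (or $\epsilon$-length) spectral cluster pieces} and then apply Lemma \ref{lemma spectral multiplier bound}, with $A^2-(\lambda+\I\mu)^2$ playing the role of the spectral multiplier $m(A)$. The key algebraic observation is that, on the range of $\Pi_{[0,2\lambda]}$, the symbol $(\tau^2-(\lambda+\I\mu)^2)^{-1}$ factors as $(\tau-\lambda-\I\mu)^{-1}(\tau+\lambda+\I\mu)^{-1}$, and the second factor is harmless there since $|\tau+\lambda+\I\mu|\gtrsim \lambda$ for $\tau\in[0,2\lambda]$. Thus everything comes down to estimating $(\tau-\lambda-\I\mu)^{-1}$ against the partition $(\tau_k)=(\epsilon k)$. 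For the imaginary part one uses $\im(\tau^2-(\lambda+\I\epsilon)^2)^{-1}\sim \epsilon\lambda\,((\tau-\lambda)^2+\epsilon^2)^{-1}\cdot|\tau+\lambda+\I\epsilon|^{-2}$, which is concentrated in the window $|\tau-\lambda|\lesssim\epsilon$; this gives \eqref{L2Lq resolvent implies L2Lq spectral cluster} and \eqref{Lq'Lq resolvent implies L2Lq spectral cluster} more or less directly, since a single cluster $\Pi_{[\lambda,\lambda+\epsilon]}$ dominates and the factor $\epsilon\lambda$ matches the normalization of the Poisson kernel. Conversely, \eqref{Lq'L2 spectral cluster implies L2Lq resolvent} is a crude bound: write $\mathbf 1_{[0,2\lambda]}(A)(A^2-(\lambda+\I\epsilon)^2)^{-1}=\sum_k \Pi_{[\epsilon k,\epsilon(k+1)]}m(A)$ with $|m(\tau)|\lesssim (\epsilon\lambda)^{-1}(1+|k-\lambda/\epsilon|)^{-1}$ on the $k$-th piece; the first inequality of Lemma \ref{lemma spectral multiplier bound} with $m_2\equiv 1$, $m_1=m$, together with $\sum_k (1+|k-\lambda/\epsilon|)^{-2}\lesssim 1$, yields the claim without any logarithm.

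The estimates \eqref{Lq'L2 spectral cluster implies Lq'Lq resolvent}, \eqref{Lq'L2 spectral cluster implies Lq'Lq resolvent (stronger version)}, \eqref{Lq'L2 spectral cluster implies Lq'Lq resolvent with powers >1} are the substantive ones and use the full $L^{q'}\to L^q$ half of Lemma \ref{lemma spectral multiplier bound} via the factorization $m=m_1m_2$. For \eqref{Lq'L2 spectral cluster implies Lq'Lq resolvent} I would split $(\tau^2-(\lambda+\I\epsilon)^2)^{-1}=m_1(\tau)m_2(\tau)$ essentially as $m_1(\tau)=(\tau-\lambda-\I\epsilon)^{-1/2}(\tau+\lambda+\I\epsilon)^{-1/2}$, $m_2$ similarly — i.e.\ take square roots of the resolvent symbol — so that on the $k$-th $\epsilon$-piece $|m_j(\tau)|\lesssim (\epsilon\lambda)^{-1/2}(1+|k-\lambda/\epsilon|)^{-1/2}$ after absorbing the $(\tau+\lambda)$ factor into a bounded factor of size $\lambda^{-1/2}$. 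Then $M_j^2\lesssim (\epsilon\lambda)^{-1}\sum_{k\le 2\lambda/\epsilon}(1+|k-\lambda/\epsilon|)^{-1}\sup_k\|\Pi_{[\epsilon k,\epsilon(k+1)]}\|_{L^{q'}\to L^2}^2$, and the harmonic sum produces exactly the factor $\ln\langle\lambda/\epsilon\rangle$. The conclusion $M_1M_2\lesssim (\epsilon\lambda)^{-1}\ln\langle\lambda/\epsilon\rangle\sup_k\|\cdots\|^2$ is \eqref{Lq'L2 spectral cluster implies Lq'Lq resolvent}. For \eqref{Lq'L2 spectral cluster implies Lq'Lq resolvent (stronger version)} the only change is that when $\mu\gg\epsilon$ one has $|\tau-\lambda-\I\mu|\gtrsim \mu$, so the number of $k$'s contributing at the ``peak'' level is $\sim\mu/\epsilon$ rather than $1$; the harmonic sum $\sum (1+|k-\lambda/\epsilon|\,\epsilon/\mu)^{-1}$ over $k\le 2\lambda/\epsilon$ gives $(\mu/\epsilon)\ln\langle\lambda/\mu\rangle$, and one also gains a factor $\langle\mu/\lambda\rangle^{-1}$ from the $(\tau+\lambda+\I\mu)^{-1}$ factor when $\mu\gtrsim\lambda$ — tracking these two factors carefully gives the stated bound. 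For \eqref{Lq'L2 spectral cluster implies Lq'Lq resolvent with powers >1}, the $\beta$-th power makes the symbol $(\tau-\lambda-\I\mu)^{-\beta}$ integrable-in-$k$ with a gain: $\sum_k (1+|k-\lambda/\epsilon|\,\epsilon/\mu)^{-\beta}\lesssim_\beta \mu/\epsilon$ (no logarithm, since $\beta>1$), while the overall normalization becomes $(\epsilon\lambda)^{-\beta}$ and one extra power $(\mu/\epsilon)\cdot(\epsilon/\mu)^\beta=(\epsilon/\mu)^{\beta-1}$ survives from balancing the peak height against the summability — this is the source of the $(\epsilon/\mu)^{\beta-1}$ factor.

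The main obstacle, and the place to be careful, is the bookkeeping of the three competing quantities in \eqref{Lq'L2 spectral cluster implies Lq'Lq resolvent (stronger version)} and \eqref{Lq'L2 spectral cluster implies Lq'Lq resolvent with powers >1}: (a) the peak height $\sim(\epsilon\lambda)^{-1}$ or $(\epsilon\lambda)^{-\beta}$ of the symbol near $\tau=\lambda$, (b) the effective width $\sim\mu/\epsilon$ clusters over which it stays near the peak, and (c) the decay in $|k-\lambda/\epsilon|$ beyond that width, whose summability is what distinguishes the $\beta=1$ (logarithmic) from the $\beta>1$ (clean) case. One has to choose the factorization $m=m_1m_2$ so that the $TT^*$/Schwarz step of Lemma \ref{lemma spectral multiplier bound} distributes these factors symmetrically, and one must verify the hypothesis $\sup_k\|\Pi_{[\tau_k,\tau_{k+1}]}\|_{L^{q'}\to L^2}<\infty$ and the finiteness of $M_j$ — both automatic here because the sum has only $O(\lambda/\epsilon)$ terms and the cluster norms are uniformly bounded by hypothesis. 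A secondary point is justifying, via the consistency/compatibility hypotheses recalled before the lemma, that all these identities between spectral multipliers, initially valid on $L^2$, extend to the $L^{q'}\to L^q$ setting; this is routine given the remarks preceding Lemma \ref{lemma spectral multiplier bound}.
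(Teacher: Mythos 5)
Your proposal is correct and follows essentially the same route as the paper: the first two pairs of estimates are obtained from the pointwise lower bound on $\im(\tau^2-(\lambda+\I\epsilon)^2)^{-1}$ together with the spectral theorem (and, for \eqref{Lq'Lq resolvent implies L2Lq spectral cluster}, the identity $\|\Pi u\|_{L^2}^2=\langle\Pi u,u\rangle$, which you glossed over with ``more or less directly''), while the remaining ones follow from Lemma~\ref{lemma spectral multiplier bound} with $m_1=m_2=(\tau^2-(\lambda+\I\mu)^2)^{-\alpha}$, $\alpha\in\{1,1/2,\beta/2\}$, exactly as in the paper. The only cosmetic difference is that you estimate $M_j^2$ by direct summation over $k$, whereas the paper compares the Darboux sum to the Riemann integral $\int|m_j|^2\,\rd\tau$; both rest on the same smoothness-on-the-$\epsilon$-scale observation and give identical bounds.
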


\begin{remark}
The relationship between $L^{q'}\to L^2$ spectral cluster and $L^{q'}\to L^2$ resolvent estimates is well known. The bound \eqref{Lq'Lq resolvent implies L2Lq spectral cluster} is contained in \cite[Lemma 10]{MR3620715}.
Estimates similar to \eqref{Lq'L2 spectral cluster implies Lq'Lq resolvent with powers >1} may be found e.g.\ in \cite{KochTataru2009} for the Hermite operator. The bounds \eqref{Lq'L2 spectral cluster implies Lq'Lq resolvent}--\eqref{Lq'L2 spectral cluster implies Lq'Lq resolvent (stronger version)} seem to be new.
\end{remark}

\begin{proof}
To prove \eqref{L2Lq resolvent implies L2Lq spectral cluster} we use that
\begin{align}\label{imaginary part of resolvent bounds spectral projection from above}
\im(\tau^2-(\lambda+\I\epsilon)^2)^{-1}\gtrsim (\epsilon\lambda)^{-1}\mathbf{1}_{[\lambda,\lambda+\epsilon]}(\tau).
\end{align}
By the functional calculus it follows that, for $u\in L^2\cap L^{q'}$
\begin{align}\label{lower bound for imaginary part of the resolvent}
\|\Pi_{[\lambda,\lambda+\epsilon]}u\|_{L^{2}}\lesssim \epsilon\lambda\|\Pi_{[0,2\lambda]}\im(A^2-(\lambda+\I\epsilon)^2)^{-1}u\|_{L^2}.
%\leq \epsilon\lambda C_R^2\|u\|_{L^{q'}}.
\end{align}
Hence, \eqref{L2Lq resolvent implies L2Lq spectral cluster} again follows by density of $L^2\cap L^{q'}$ in $L^{q'}$. 
%We first observe that
%\begin{align}\label{Imaginary part of the resolvent}
%2\im(\tau^2-(\lambda+\I\epsilon)^2)^{-1}=\frac{4\epsilon\lambda}{(\tau^2-\lambda^2+\epsilon^2)^2+4(\epsilon\lambda)^2}.
%\end{align}
%Then, by the functional calculus,
%\begin{align}\label{Imaginary part of the resolvent 2}
%2\im(A^2-(\lambda+\I\epsilon)^2)^{-1}=T^2,\quad T:=\frac{2\sqrt{\epsilon\lambda}}{\sqrt{(A^2-\lambda^2+\epsilon^2)^2+4(\epsilon\lambda)^2}}.
%\end{align}
The same argument, together with the fact that $\Pi_{[\lambda,\lambda+\epsilon]}^2=\Pi_{[\lambda,\lambda+\epsilon]}$, yields
\begin{align*}
\|\Pi_{[\lambda,\lambda+\epsilon]}u\|^2_{L^2}=\langle \Pi_{[\lambda,\lambda+\epsilon]}u,u\rangle\lesssim \epsilon\lambda\langle \Pi_{[0,2\lambda]} \im(A^2-(\lambda+\I\epsilon)^2)^{-1}u,u\rangle,
\end{align*}
and this implies \eqref{Lq'Lq resolvent implies L2Lq spectral cluster}. Inequality \eqref{Lq'L2 spectral cluster implies Lq'Lq resolvent} is a special case of \eqref{Lq'L2 spectral cluster implies Lq'Lq resolvent (stronger version)} for $\mu=\epsilon$. To prove \eqref{Lq'L2 spectral cluster implies L2Lq resolvent}, \eqref{Lq'L2 spectral cluster implies Lq'Lq resolvent (stronger version)}, \eqref{Lq'L2 spectral cluster implies Lq'Lq resolvent with powers >1} we use Lemma~ \ref{lemma spectral multiplier bound} with $J=[0,2\lambda]$, $N=\lceil 2\lambda/\epsilon\rceil$, $\tau_k=\epsilon k$, and with
\begin{align}\label{mj for resolvent}
 m_j(\tau)=(\tau^2-(\lambda+\I\mu)^2))^{-\alpha},\quad \alpha\in \{1,1/2,\beta/2\}
\end{align}
for $j=1,2$. We will show that $m_j$ is smooth on the $\epsilon$-scale and bound the sum \eqref{def. Mj} in the definition of $M_j$ by a Darboux (Riemann) integral. The required smoothness is captured by the bound 
\begin{align}\label{smoothness on the epsilon-scale}
\sup_{\tau\in [\epsilon k,\epsilon (k+1)]}|m_j(\tau)|^2\lesssim \inf_{\tau\in [\epsilon k,\epsilon (k+1)]}|m_j(\tau)|^2,
\end{align}
with an implied constant that is uniform in $\lambda,\epsilon,\mu$. 
This holds since, for $\tau\in[\epsilon k,\epsilon (k+1)]$, we have $|\tau+(\lambda+\I\mu)|\approx \lambda+\mu$ and $|\tau-(\lambda+\I\mu)|\approx |\epsilon k-\lambda|+\mu$. The first bound is obvious, and so is the upper bound for the second. We also have the lower bound $||\epsilon k-\lambda|-\epsilon|+\mu$ for the second. We only need to consider the case $|\epsilon k-\lambda|\geq C\mu$ since the other case is again obvious. If $\mu\geq C'\epsilon$, we get $||\epsilon k-\lambda|-\epsilon|\geq |\epsilon k-\lambda|(1-1/(CC'))$. Note that $C'$ is fixed, and we may choose $C=1/(2C')$, establishing the desired lower bound.

Consider the partition $P$ of $[0,(N+1)\epsilon]\supset [0,2\lambda]$ given by $\tau_0<\tau_1\ldots<\tau_{N+1}$ and define the Darboux sums
\begin{align*}
s(P)=\epsilon\sum_{k=0}^N\inf_{\tau\in [\epsilon k,\epsilon (k+1)]}|m_j(\tau)|^2,\quad
S(P)=\epsilon\sum_{k=0}^N\sup_{\tau\in [\epsilon k,\epsilon (k+1)]}|m_j(\tau)|^2.
\end{align*}
Obviously, we have
\begin{align}\label{s(P)<...<S(P)}
s(P)\leq \int_0^{(N+1)\epsilon}|m_j(\tau)|^2\rd\tau\leq S(P).
\end{align}
On the other hand, by \eqref{smoothness on the epsilon-scale}, we also have the converse inequalities, up to constants. Hence,
\begin{align*}
S(P)\lesssim\int_0^{4\lambda}|\tau^2-(\lambda+\I\mu)^2)|^{-2\alpha}\rd\tau \lesssim (\lambda+\mu)^{-2\alpha}\mu^{1-2\alpha} 
\ln^{\nu}\langle\lambda/\mu\rangle,
\end{align*}
with $\nu=1$ if $\alpha=1/2$ and $\nu=0$ if $\alpha>1/2$. 
Since, by definition of $M_j$ and $S(P)$, 
\begin{align}\label{Upper bound Mj^2}
M_j^2\leq \epsilon^{-1}S(P)\sup_{1\leq k\leq N}\|\Pi_{k}\|^2_{L^{q'}\to L^2},
\end{align}
the claims follows from \eqref{spectral multiplier bound}.
\end{proof}

In most applications, the spectral localization can easily be removed by Sobolev embedding and elliptic estimates. Moreover, the spectral cluster bounds usually take the form \eqref{abstract spectral cluster bound epsilon}.
For convenience we record the following corollary.  From now on we will use the notation $\Pi_{\leq 2\lambda}:=\Pi_{[0,2\lambda]}$.

\begin{corollary}\label{corollary equivalence cluster and quasimode}
Let $\epsilon=\epsilon(\lambda)$ and $\delta=\delta(\lambda)$ be such that $0<\delta,\epsilon\leq \lambda$. The following bounds are equivalent.
\begin{itemize}
\item[(a)] The spectral cluster bound: 
\begin{align}\label{abstract spectral cluster bound epsilon}
\|\Pi_{[\lambda,\lambda+\epsilon]}\|_{L^2\to L^{q}}\lesssim \delta\langle\lambda\rangle^{\gamma(q)}.
\end{align}
\item[(b)] The spectrally localized $L^2\to L^q$ resolvent bound:
\begin{align}\label{abstract L2Lq resolvent bound}
\|\Pi_{\leq 2\lambda}(A^2-(\lambda+\I\epsilon)^2)^{-1}\|_{L^{2}\to L^{q}}\lesssim \epsilon^{-1}\delta\langle\lambda\rangle^{\gamma(q)-1}.
\end{align}
\item[(c)] The quasimode bound:
\begin{align}\label{abstract quasimode bound}
\|u\|_{L^{q}}\lesssim \delta\langle\lambda\rangle^{\gamma(q)}\|u\|_{L^{2}}+\epsilon^{-1}\delta\langle\lambda\rangle^{\gamma(q)-1}\|(A^2-\lambda^2)u\|_{L^{2}},
\end{align}
for all functions $u$ spectrally localized in $[0,2\lambda]$.
\end{itemize}
Moreover, if $\mu=\mu(\lambda)$, $\mu\gtrsim\epsilon$, these bounds imply the spectral cluster estimate 
\begin{align}\label{abstract spectral cluster bound longer window mu}
\|\Pi_{[\lambda,\lambda+\mu]}\|_{L^2\to L^{q}}\lesssim (\mu/\epsilon)^{1/2} \delta\langle\lambda\rangle^{\gamma(q)}
\end{align}
and the spectrally localized $L^{q'}\to L^q$ resolvent estimate
\begin{align}\label{abstract resolvent bound with log loss}
\|\Pi_{\leq 2\Lambda}(A^2-(\lambda+\I\mu)^2)^{-1}\|_{L^{q'}\to L^{q}}\lesssim \epsilon^{-1}\langle\mu/\lambda\rangle^{-1}\ln\langle\lambda/\mu\rangle \delta^2\lambda^{2\gamma(q)-1}.
\end{align}
\end{corollary}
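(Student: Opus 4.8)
The plan is to run the cycle of implications $(a)\Rightarrow(b)\Rightarrow(c)\Rightarrow(a)$, in which only the first has content — and that content is already supplied by Proposition~\ref{proposition abstract bounds} — while $(b)\Rightarrow(c)$ and $(c)\Rightarrow(a)$ are one-line perturbation arguments. Throughout I use that $\Pi_{[a,b]}$ is a self-adjoint projection, so $\|\Pi_{[a,b]}\|_{L^2\to L^q}=\|\Pi_{[a,b]}\|_{L^{q'}\to L^2}$, and I read the cluster bound \eqref{abstract spectral cluster bound epsilon} as holding uniformly over the length-$\epsilon$ windows contained in $[0,2\lambda]$ — harmless, since the contribution of frequencies $\ll\lambda$ is trivial and near $\lambda$ this is just the hypothesis at a comparable scale. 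For $(a)\Rightarrow(b)$ I feed \eqref{abstract spectral cluster bound epsilon} into \eqref{Lq'L2 spectral cluster implies L2Lq resolvent}, whose right-hand side then becomes $(\epsilon\lambda)^{-1}\delta\langle\lambda\rangle^{\gamma(q)}\approx\epsilon^{-1}\delta\langle\lambda\rangle^{\gamma(q)-1}$; this bounds $\|\Pi_{\leq 2\lambda}(A^2-(\lambda+\I\epsilon)^2)^{-1}\|_{L^{q'}\to L^2}$, and the $L^2\to L^q$ norm required in \eqref{abstract L2Lq resolvent bound} is obtained by passing to the adjoint operator $\Pi_{\leq 2\lambda}(A^2-(\lambda-\I\epsilon)^2)^{-1}$, to which \eqref{Lq'L2 spectral cluster implies L2Lq resolvent} applies equally well (its proof via Lemma~\ref{lemma spectral multiplier bound} and the smoothness bound \eqref{smoothness on the epsilon-scale} only sees the moduli $|\tau+(\lambda+\I\epsilon)|$ and $|\tau-(\lambda+\I\epsilon)|$, which are unchanged under $\epsilon\mapsto-\epsilon$). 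Alternatively, $(a)\Rightarrow(c)$ can be proved directly by splitting a spectrally localized $u$ into its near-diagonal part $\Pi_{[\lambda,\lambda+\epsilon]}u$ (estimated by \eqref{abstract spectral cluster bound epsilon}) and the dyadic off-diagonal pieces $\Pi_{[\epsilon k,\epsilon(k+1)]}u$ with $|\epsilon k-\lambda|\gtrsim\epsilon$, on each of which $(A^2-\lambda^2)^{-1}$ is a bounded spectral multiplier of size $\approx(\lambda|\epsilon k-\lambda|)^{-1}$, and summing with Cauchy--Schwarz; this merely re-packages the relevant case of Proposition~\ref{proposition abstract bounds}.

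For $(b)\Rightarrow(c)$, given $u$ spectrally localized in $[0,2\lambda]$ set $f:=(A^2-(\lambda+\I\epsilon)^2)u$, so that $u=\Pi_{\leq 2\lambda}(A^2-(\lambda+\I\epsilon)^2)^{-1}f$; applying \eqref{abstract L2Lq resolvent bound} and then the estimate $\|f\|_{L^2}\lesssim\|(A^2-\lambda^2)u\|_{L^2}+\epsilon\lambda\|u\|_{L^2}$ (valid since $|(\lambda+\I\epsilon)^2-\lambda^2|\lesssim\epsilon\lambda$) produces exactly the two terms of \eqref{abstract quasimode bound}, using $\lambda\langle\lambda\rangle^{\gamma(q)-1}\lesssim\langle\lambda\rangle^{\gamma(q)}$. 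For $(c)\Rightarrow(a)$, apply \eqref{abstract quasimode bound} to $u=\Pi_{[\lambda,\lambda+\epsilon]}w$ for arbitrary $w\in L^2$: this $u$ is spectrally localized in $[0,2\lambda]$, $\|u\|_{L^2}\leq\|w\|_{L^2}$, and $\|(A^2-\lambda^2)u\|_{L^2}\lesssim\epsilon\lambda\|w\|_{L^2}$ because $|\tau^2-\lambda^2|\lesssim\epsilon\lambda$ on $[\lambda,\lambda+\epsilon]$, so the right-hand side of \eqref{abstract quasimode bound} collapses to $\delta\langle\lambda\rangle^{\gamma(q)}\|w\|_{L^2}$, i.e.\ \eqref{abstract spectral cluster bound epsilon}.

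It remains to prove the two consequences. For \eqref{abstract spectral cluster bound longer window mu} I partition $[\lambda,\lambda+\mu]$ into $N=\lceil\mu/\epsilon\rceil$ intervals $I_j$ of length $\leq\epsilon$, write $\Pi_{[\lambda,\lambda+\mu]}u=\sum_j\Pi_{I_j}u$, bound each $\|\Pi_{I_j}u\|_{L^q}$ by \eqref{abstract spectral cluster bound epsilon} at the comparable frequency $\lambda$, and sum via Cauchy--Schwarz in $j$ together with $L^2$-orthogonality of the $\Pi_{I_j}$; this produces the claimed loss $(\mu/\epsilon)^{1/2}$ (as $N\lesssim\mu/\epsilon$). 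For \eqref{abstract resolvent bound with log loss} I simply insert $\sup_k\|\Pi_{[\epsilon k,\epsilon(k+1)]}\|_{L^{q'}\to L^2}^2\lesssim\delta^2\langle\lambda\rangle^{2\gamma(q)}$ into \eqref{Lq'L2 spectral cluster implies Lq'Lq resolvent (stronger version)} and use $\lambda^{-1}\langle\lambda\rangle^{2\gamma(q)}\lesssim\langle\lambda\rangle^{2\gamma(q)-1}$ (with $2\Lambda$ in that display evidently meant to be $2\lambda$). I do not expect a genuine obstacle; the only point requiring care is the bookkeeping that distinguishes the $L^2\to L^q$ and $L^{q'}\to L^2$ mapping norms of the non-self-adjoint resolvent — handled above through the adjoint and the $\pm\I\epsilon$ symmetry of Proposition~\ref{proposition abstract bounds} — together with the observation that the $(\mu/\epsilon)^{1/2}$ loss in \eqref{abstract spectral cluster bound longer window mu} is genuine, as it is already present in the Zoll case where the cluster norm grows like the square root of the window length.
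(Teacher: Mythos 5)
Your proposal is correct and follows essentially the same route as the paper: the paper proves the cycle by citing Proposition~\ref{proposition abstract bounds} for the $(a)\Leftrightarrow(b)$ and $(a)\Rightarrow$\eqref{abstract resolvent bound with log loss} links, then notes $(b)\Rightarrow(c)$ is the triangle inequality and $(c)\Rightarrow(a)$ follows from $\|(A^2-\lambda^2)\Pi_{[\lambda,\lambda+\epsilon]}u\|_{L^2}\lesssim\epsilon\lambda\|u\|_{L^2}$, with \eqref{abstract spectral cluster bound longer window mu} coming from orthogonality. You expand each of those one-liners (the duality/$\epsilon\mapsto-\epsilon$ bookkeeping in $(a)\Rightarrow(b)$, the Duhamel-style split in $(b)\Rightarrow(c)$, the Cauchy--Schwarz-plus-orthogonality in \eqref{abstract spectral cluster bound longer window mu}), and you also flag the $\Lambda$-versus-$\lambda$ typo and the implicit uniformity over short windows inside $[0,2\lambda]$, which the paper assumes silently; none of this constitutes a different method.
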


\begin{proof}
The equivalence of $(a)$, $(b)$ and \eqref{abstract resolvent bound with log loss} follow from Proposition \ref{proposition abstract bounds}. The implication $(b)\implies (c)$ follows from the triangle inequality, and $(c)\implies (a)$ follows from 
\begin{align*}
\|(A^2-\lambda^2)\Pi_{[\lambda,\lambda+\epsilon]}u\|_{L^{2}}\lesssim\epsilon\lambda\|u\|_{L^{2}}.
\end{align*}
The spectral cluster estimate \eqref{abstract spectral cluster bound longer window mu} follows from \eqref{abstract spectral cluster bound epsilon} and orthogonality. 
\end{proof}

\begin{remark}\label{remark elliptic estimates outside a cone}
The bound \eqref{Lq'Lq resolvent implies L2Lq spectral cluster} shows that if the spectral cluster estimate \eqref{abstract spectral cluster bound epsilon} is sharp, then the resolvent estimate \eqref{abstract resolvent bound with log loss} with $\mu=\epsilon$ will be sharp up to a logarithm. It also shows that there is a separation of scales at $\mu=\lambda$, at which point the logarithm becomes bounded. This corresponds to an elliptic estimate outside a cone. It is slightly less obvious that \eqref{abstract resolvent bound with log loss} is almost sharp for intermediate values $\epsilon\lesssim\mu\lesssim\lambda$. To see this, one uses orthogonality and the assumed sharpness of \eqref{abstract spectral cluster bound epsilon} to get a lower bound
\begin{align*}
\|\Pi_{[\lambda,\lambda+\mu]}\|_{L^2\to L^{q}}\gtrsim (\mu/\epsilon)^{1/2} \lambda^{\gamma(q)}.
\end{align*}
Then the same argument leading to \eqref{lower bound for imaginary part of the resolvent} with $\mu$ in place of $\epsilon$ yields the conclusion.
\end{remark}

\begin{remark}
In most applications $\delta(\lambda)$ will be a power of $\lambda$ and hence could be omitted by redefining $\gamma(q)$. However, in some cases $\delta(\lambda)=\ln^{\nu}\langle\lambda\rangle$ for some $\nu\in\R$, see e.g.\ \eqref{Lq'Lq Lipschitz intro q=8}.
For the Laplace--Beltrami operator on a compact boundaryless manifold, corresponding to $A=\sqrt{-\Delta_g}$, the relevant ranges for smooth metrics are $\epsilon\in [\lambda^{-1},1]$ and $\delta\in [\epsilon^{1/2},1]$, with $\delta(\lambda),\epsilon(\lambda)$ monotonically decreasing \cite{huang2020lp}. The universal estimates of Sogge~\cite{MR930395} correspond to $\delta,\epsilon=1$ and $\gamma(q)=\sigma(q)$. In the Euclidean case, the Tomas--Stein bound \eqref{Tomas--Stein spectral cluster form} corresponds to $\delta=\epsilon^{1/2}$ and $\epsilon>0$.
\end{remark}

\section{From spectral cluster to resolvent bounds}
\label{Section From spectral cluster to resolvent bounds}
We now consider a setting where the logarithmic loss in \eqref{abstract resolvent bound with log loss} can be removed. We assume that $X$ is a smooth compact Riemannian manifold $(M,g)$ without boundary and $A=\sqrt{-\Delta_g}$. The methods we use require Lipschitz regularity of the metric. This condition is natural for the paradifferential reductions to a first order frequency localized equation (see \cite{MR2280790,MR2094851} and especially Step 1 in the proof of \cite[Theorem 2.1]{MR3282983}). 

\subsection{A generalization of Theorem \ref{theorem spectral cluster to resolvent intro}}
We generalize the result of Theorem \ref{theorem spectral cluster to resolvent intro} to spectral clusters of any size $\epsilon\geq \lambda^{-1}$ (not just $\epsilon=1$).

\begin{theorem}\label{theorem spectral cluster to resolvent stronger version}
Let $(M,g)$ be a compact Riemannian manifold without boundary and with Lipschitz metric $g$. Assume that $2\leq q\leq 2^*$, and that the spectral cluster estimate 
\begin{align}\label{spectral cluster estimate strong theorem}
    \|\Pi_{[\lambda,\lambda+\epsilon]}\|_{L^{2}\to L^{q}}\lesssim \delta\langle\lambda\rangle^{\gamma(q)}
\end{align}
holds for some $\gamma(q)\in[\sigma(q),\sigma(q)+1/2]$,  $\epsilon\geq \lambda^{-1}$, $\delta\geq\epsilon^{1/2}$. Then
%and all $\lambda\geq 1$}. Then
\begin{align}\label{resolvent estimate strong theorem}
\|(\Delta_g+(\lambda+\I \epsilon)^2)^{-1}\|_{L^{q'}\to L^{q}}\lesssim \epsilon^{-1}\delta^2\langle\lambda\rangle^{2\gamma(q)-1}.
%,\quad \lambda\geq 1.
\end{align}
\end{theorem}

\begin{remark}
    The theorem shows that improved spectral cluster bounds ($\epsilon=o(1)$ as $\lambda\to\infty$) imply improved resolvent bounds. For smooth metrics, this was shown by Bourgain, Shao, Sogge and Yao \cite[Theorem 1.3]{BourgainShaoSoggeEtAl2015} for the exponent $q=2^*$. The proof uses the Hadamard parametrix for the wave equation, which is not available under our limited smoothness assumptions. We therefore include the argument for improved bounds, avoiding the Hadamard parametrix and extending the result of \cite{BourgainShaoSoggeEtAl2015} to all $2\leq q\leq 2^*$.
\end{remark}
We briefly outline the plan of the proof. We need to prove that if $u$ satisfies the resolvent equation 
\begin{align}\label{resolvent eq. added revision}
    (\Delta_g+(\lambda+\I\epsilon)^2)u=F,
\end{align}
and \eqref{spectral cluster estimate strong theorem} holds, then
\begin{align}\label{resolvent estimate added revision}
    \|u\|_{L^q(M)}\lesssim \epsilon^{-1}\delta^2\langle\lambda\rangle^{2\gamma(q)-1} \|F\|_{L^q(M)}.
\end{align}
 Subsections \ref{subs. reductions}--\ref{subs. proof eps>1} are devoted to the proof of the case $\epsilon\geq 1$. Subsection \ref{subs. eps<1} deals with the proof for $\lambda^{-1}\leq\epsilon<1$.

\subsection{Reductions}\label{subs. reductions}
Until the end of Subsection \ref{subs. proof eps>1} we will consider the case $\epsilon\geq 1$. Note that, by Sobolev embedding, the estimate is trivial for $\lambda\leq 1$, so we will always assume $\lambda\geq 1$ in the following. 
 We will perform a series of reductions:
 \begin{itemize}
     \item[(i)] Reduction to the case $\epsilon\leq \lambda$ and to spectrally localized solutions ($u=\Pi_{\leq 2\lambda}u$) via elliptic estimates;
     \item[(ii)] First application of the spectral cluster bound;
     \item[(iii)] Localization and reduction to an equation on $\R^n$;
%     \item[(iv)] Frequency localization to a conic region;
     \item[(iv)] Symbol smoothing and factorization; 
     %(and reduction to $\eps=1$);
     \item[(v)] Reduction to a first order hyperbolic (half wave) equation;
     \item[(vi)] Reduction of the inhomogeneous to a homogeneous estimate (second application of the spectral cluster bound).
 \end{itemize}

\subsection{Elliptic estimates}\label{subs. Elliptic estimates}
As already pointed out in the introduction, stronger estimates than \eqref{resolvent estimate strong theorem} hold in the elliptic regime, that is when
%(see e.g.\ \cite[Corollary 5]{MR2280790}). 
\begin{itemize}
    \item[(a)] either $\epsilon\geq \lambda$, 
    \item[(b)] or $u=\Pi_{>2\lambda}u$.
\end{itemize}
The following proposition thus takes care of the first reduction step.

\begin{proposition}\label{prop. elliptic} Let $(M,g)$ be a compact Riemannian manifold without boundary and with Lipschitz metric. Assume that $u$ satisfies \eqref{resolvent eq. added revision} and that either (a) or (b) hold. Then, for $2\leq q\leq 2^*$, $\lambda\geq 1$, we have
    \begin{align}\label{elliptic estimate}
\|(\Delta_g+(\lambda+\I \epsilon)^2)^{-1}u\|_{L^q}\lesssim \lambda^{2n(\frac{1}{2}-\frac{1}{q})-2}\|u\|_{L^{q'}},
    \end{align}
      \begin{align}\label{elliptic estimate L2Lq}
\|(\Delta_g+(\lambda+\I \epsilon)^2)^{-1}u\|_{L^q}\lesssim \lambda^{n(\frac{1}{2}-\frac{1}{q})-2}\|u\|_{L^{2}}.
    \end{align}  
\end{proposition}

\begin{remark}
    Since $\eps^{-1}\delta^2\geq 1$ and $\gamma(q)\geq n(\frac{1}{2}-\frac{1}{q})-\frac{1}{2}$, we have 
    \begin{align*}
       \lambda^{2n(\frac{1}{2}-\frac{1}{q})-2}\leq  \eps^{-1}\delta^2\lambda^{2\gamma(q)-1}.
    \end{align*}
\end{remark}

\begin{proof}
%We need to prove that if $\Pi_{>2\lambda}u=u$, then $\|u\|_{L^q}\lesssim \|F\|_{L^{q'}}$. 
Taking inner product in $L^2(M)$ of both sides of \eqref{resolvent eq. added revision} against $u$ yields
\begin{align}\label{IBP revision}
    \|\nabla_g u\|_{L^2(M)}^2-(\lambda+\I\epsilon)^2\|u\|_{L^2(M)}^2=-\langle F,u\rangle_{L^2(M)},
\end{align}
where $\nabla_g$ is the gradient with respect to the metric $g$. 
Taking real and imaginary parts, we obtain
\begin{align}
    \|\nabla_g u\|_{L^2}^2-(\lambda^2-\epsilon^2)\|u\|_{L^2}^2&\leq \|u\|_{H^1}\|F\|_{H^{-1}},\label{real part}
    \\
       2\epsilon\lambda \|u\|_{L^2}^2&\leq \|u\|_{H^1}\|F\|_{H^{-1}},\label{im part}
\end{align}
where $\|u\|_{H^1}^2:=\|\nabla_g u\|_{L^2}^2+\|u\|_{L^2}^2$ and $H^{-1}$ is the dual space. In case (a), adding \eqref{real part} and \eqref{im part}, we get
\begin{align}\label{add real and imag. part prelim.}
    \|\nabla_g u\|_{L^2}^2+\lambda^2\|u\|_{L^2}^2\lesssim |\langle F,u\rangle|\leq\|u\|_{H^1}\|F\|_{H^{-1}}\lesssim c\|u\|_{H^1}^2+c^{-1}\|F\|_{H^{-1}}^2,
\end{align}
where $c>0$ is arbitrary. Choosing $c$ sufficiently small, we may absorb the first term on the right of \eqref{add real and imag. part} in the left hand side, so that
\begin{align}\label{add real and imag. part}
   \lambda^2\|u\|_{L^2}^2+ \|u\|_{H^1}\lesssim \|F\|_{H^{-1}},
\end{align}
In case (b), since $\Pi_{>2\lambda}u=u$, we have 
\begin{align*}
  \|\nabla_g u\|_{L^2(M)}^2=\langle \Delta_g u,u\rangle_{L^2(M)}=  \|\sqrt{-\Delta_g} u\|_{L^2(M)}^2\geq 4\lambda^2\| u\|_{L^2(M)}^2.
\end{align*}
This together with \eqref{real part} yields \eqref{add real and imag. part prelim.} and hence \eqref{add real and imag. part}.
Sobolev embedding $H^1(M)\subset L^{\frac{2n}{n-2}}(M)$, the dual embedding $L^{\frac{2n}{n+2}}(M)\subset H^{-1}(M)$ and \eqref{add real and imag. part} yield
\begin{align*}
\|u\|_{L^{\frac{2n}{n-2}}}\lesssim  \|u\|_{H^1}\lesssim \|F\|_{H^{-1}} \lesssim \|F\|_{L^{\frac{2n}{n+2}}},       
\end{align*}
while \eqref{add real and imag. part prelim.} also yields
\begin{align*}
    \lambda^2\|u\|_{L^2}^2\lesssim |\langle F,u\rangle|\leq \|u\|_{L^2}\|F\|_{L^2},
\end{align*}
hence $\|u\|_{L^2}\lesssim \lambda^{-2}\|F\|_{L^2}$. Interpolation between the $L^{\frac{2n}{n+2}}\to L^{\frac{2n}{n-2}}$ and the $L^2\to L^2$ bound yields \eqref{elliptic estimate} for $q$ in the stated range
since
\begin{align*}
    \frac{1}{q}=\frac{(1-\theta)(n-2)}{2n}+\frac{\theta}{2}\iff \theta=-n(\frac{1}{2}-\frac{1}{q})+1.
\end{align*}
By duality, \eqref{elliptic estimate L2Lq} is equivalent to
      \begin{align}\label{elliptic estimate Lq'L2}
\|(\Delta_g+(\lambda+\I \epsilon)^2)^{-1}u\|_{L^2}\lesssim \lambda^{n(\frac{1}{2}-\frac{1}{q})-2}\|u\|_{L^{q'}}.
    \end{align} 
By \eqref{add real and imag. part prelim.}, we have
\begin{align*}
        \lambda^2\|u\|_{L^2}^2\lesssim |\langle F,u\rangle|\leq \|u\|_{L^{q}}\|F\|_{L^{q'}},
\end{align*}
which together with \eqref{elliptic estimate} gives
\begin{align*}
    \lambda^2\|u\|_{L^2}^2\lesssim  \lambda^{2n(\frac{1}{2}-\frac{1}{q})-2} \|F\|_{L^{q'}}^2
\end{align*}
and hence proves \eqref{elliptic estimate Lq'L2}.
\end{proof}

\begin{remark}
    Since we were only working with quadratic forms, it would have been sufficient to assume that the coefficients of the metric are bounded measurable functions.
    \end{remark}

\subsection{First application of the spectral cluster bound}
Given the elliptic estimate~
\eqref{elliptic estimate}, 
% in the following we assume that $\eps\leq \lambda$ and that $u$ is spectrally localized, $\Pi_{\leq 2\lambda}u=u$. 
 % We record some consequences of this spectral localization.
it remains to prove 
 \begin{align}\label{spectrally localized resolvent estimate (revision)}
         \|\Pi_{\leq 2\lambda}u\|_{L^q(M)}\lesssim \epsilon^{-1}\delta^2\lambda^{2\gamma(q)-1} \|(\Delta_g+(\lambda+\I\epsilon)^2)u\|_{L^{q'}(M)}.
 \end{align}
The next lemma shows how the spectral cluster bound \eqref{spectral cluster estimate strong theorem} is used to control various errors that will appear later when we localize in position and/or frequency. 
\begin{lemma}\label{lemma first application of spectral cluster}
Assume that the spectral cluster estimate \eqref{spectral cluster estimate strong theorem} holds. Also assume that any solution of the equation
\begin{align}\label{resolvent eq F+G}
       (\Delta_g+(\lambda+\I\epsilon)^2)u=F+G 
\end{align}
on $M$ satisfies the bound
\begin{align}\label{ostensibly weaker bound}
\|\Pi_{\leq 2\lambda}u\|_{L^q}\lesssim  \delta\lambda^{\gamma(q)}
(\|u\|_{L^2}+\lambda^{-1}\|\nabla_gu\|_{L^2}+(\epsilon\lambda)^{-1}\|G\|_{L^{2}}+\epsilon^{-1}\delta\lambda^{\gamma(q)-1}\|F\|_{L^{q'}}).
\end{align}
Then \eqref{spectrally localized resolvent estimate (revision)} holds.
\end{lemma}

\begin{proof}
By linearity it suffices to prove this separately when $G=0$ or $F=0$.
    By the implication $a)\implies b)$ in Corollary \ref{corollary equivalence cluster and quasimode}, \eqref{spectral cluster estimate strong theorem} implies the $L^2\to L^q$ bound
\begin{align*}
    \|\Pi_{\leq 2\lambda}(\Delta_g+(\lambda+\I\epsilon)^2)^{-1}\|_{L^{2}\to L^{q}}\lesssim \epsilon^{-1}\delta\lambda^{\gamma(q)-1}.
\end{align*}
This is still valid if we replace $\lambda+\I\epsilon$ by $\lambda-\I\epsilon$. Hence,
by duality (note that $\Pi_{\leq 2\lambda}$ commutes with the resolvent), we have the $L^{q'}\to L^2$ bound
\begin{align}
    \|\Pi_{\leq 2\lambda}(\Delta_g+(\lambda+\I\epsilon)^2)^{-1}\|_{L^{q'}\to L^{2}}\lesssim \epsilon^{-1}\delta\lambda^{\gamma(q)-1}.
\end{align}
%By the spectral localization, 
The latter implies that
%, if $\Pi_{\leq 2\lambda}u=u$, then
\begin{align}\label{Lq'L2 bound}
    \|\Pi_{\leq 2\lambda}u\|_{L^2(M)}
    %+\lambda^{-1}\|\nabla_gu\|_{L^2(M)}
    \lesssim \epsilon^{-1}\delta\lambda^{\gamma(q)-1}
    \|(\Delta_g+(\lambda+\I\epsilon)^2)u\|_{L^{q'}(M)}.
\end{align}
Due to the spectral localization, we also have 
\begin{align*}
    \lambda^{-1}\|\nabla_g\Pi_{\leq 2\lambda}u\|_{L^2(M)}\lesssim \|\Pi_{\leq 2\lambda}u\|_{L^2(M)}.
\end{align*}
%whence \eqref{ostensibly weaker bound} implies \eqref{spectrally localized resolvent estimate (revision)}.
%Since $\Pi_{\leq 2\lambda}^2=\Pi_{\leq 2\lambda}$, 
The last two displays together with \eqref{ostensibly weaker bound} yield
\begin{align*}
    \|\Pi_{\leq 2\lambda}u\|_{L^q}&\lesssim  \delta\lambda^{\gamma(q)}
(\|\Pi_{\leq 2\lambda}u\|_{L^2}+\lambda^{-1}\|\nabla_g\Pi_{\leq 2\lambda}u\|_{L^2}+\epsilon^{-1}\delta\lambda^{\gamma(q)-1}\|F\|_{L^{q'}})\\
&\lesssim \epsilon^{-1}\delta^2\lambda^{2\gamma(q)-1}\|F\|_{L^{q'}}.
\end{align*}
This proves the claim when $G=0$. The case $F=0$ follows immediately from the quasimode estimate \eqref{abstract quasimode bound} (using the implication $a)\implies c)$ in Corollary \ref{corollary equivalence cluster and quasimode}).
\end{proof}

Note that we can absorb $(2\I\lambda\eps-\eps^2)u$ into the driving term $G$ on the right hand side of~\eqref{resolvent eq F+G} without changing the estimate. We will thus consider the equation
    \begin{align}\label{resolvent eq F+G without ieps}
       (\Delta_g+\lambda^2)u=F+G,
\end{align}
and we need to prove \eqref{ostensibly weaker bound}, given \eqref{spectral cluster estimate strong theorem}.

%For the rest of this section we will always implicitly assume that the spectral cluster estimate \eqref{spectral cluster estimate strong theorem} holds.

\subsection{Localization and reduction to an equation on $\R^n$}
\label{subsec. Localization and reduction to an equation on Rn}
% In view of the stronger elliptic estimate \eqref{elliptic estimate} 
%  we assume in what follows that $\eps\leq \lambda$ and that $u$ is spectrally localized, $\Pi_{\leq 2\lambda}u=u$. We record some consequences of this spectral localization. To this end, it is useful to use local coordinates. Hence 
 We cover $M$ by finitely many charts, $M=\bigcup_{j=1}^K U_j$, and consider a partition of unity $\phi_j\in C_c^{\infty}(U_j)$, $j=1,\ldots, K$, subordinate to this cover. By the triangle inequality, it would thus suffice to prove \eqref{resolvent estimate added revision} for $\phi_j u$.
%\begin{align*}
 %   \sum_{j=1}^K\phi_j=1\quad \mbox{ on } M.
%\end{align*}
If $u$ satisfies the resolvent equation \eqref{resolvent eq F+G}, then $\phi u$ satisfies
\begin{align}\label{resolvent equation for phi u}
     (\Delta_g+\lambda^2)(\phi u)=\phi (F+G)+[\Delta_g,\phi]u.   
\end{align}
Since all the terms are supported in a fixed chart $U_j$, we may write $\Delta_g$ in local coordinates,
\begin{align}
\Delta_g=|g|^{-1/2}\partial_ig^{ij}|g|^{1/2}\partial_j,
\end{align}
where $g^{ij}:=(g^{-1})_{ij}$, $|g|:=|\det g|$ and repeated indices are summed over. 
% Note that the commutator in \eqref{resolvent equation for phi u} contains a term of the form $b^j\partial_j u$, where $b^j$ is the derivative of a Lipschitz function (more precisely, $b^j=\phi|g|^{-1/2}\partial_i(g^{ij}|g|^{1/2})$. Due to this low regularity, the operator formulation \eqref{resolvent equation for phi u} is unwieldy, especially since subsequent reductions would involve commutators with $b^j$. As mentioned in Remark \ref{remark quadratic form}, equation \eqref{resolvent equation for phi u} should be interpreted in a weak sense, that is, we should take its inner product in $L^2(\R^n,|g|^{1/2}\rd x)$ (where $\rd x$ is Lebesgue measure) with $v\in C_c^{\infty}(\R^n)$, thus obtaining
% \begin{equation}\label{quadratic form}
% \begin{split}
%     -\int_{\R^n}g^{ij}\partial_j(\phi u)\partial_i\overline{v} |g|^{1/2}\rd x+\lambda^2\int_{\R^n}\phi u\overline{v} |g|^{1/2}\rd x=\int_{\R^n}\phi (F+G)\overline{v} |g|^{1/2}\rd x\\+\int_{\R^n}g^{ij}u\partial_j\phi \partial_i\overline{v} |g|^{1/2}\rd x.
% \end{split}
% \end{equation}
We set $\rho=|g|^{1/2}$, $a^{ij}=|g|^{1/2}g^{ij}$, and we will work in $L^2(\R^n)=L^2(\R^n,\rd x)$. Note that $\rho$ and $(a^{ij})$ are bounded from above and below by a positive constant (the latter in the sense of quadratic forms). Then \eqref{resolvent equation for phi u} can be written as
% We use the shortened notation 
\begin{align}\label{resolvent equation for phi u II}
    \partial_i(a^{ij}\partial_j(\phi u))+\lambda^2\rho\phi u=\rho\phi (F+G)+\partial_i(a^{ij}u\partial_j\phi)+a^{ij}\partial_i\phi\partial_j u.
\end{align}
% %with the understanding that this means 
% instead of \eqref{quadratic form}. 
%holds for all $v\in C_c^{\infty}(\R^n)$.
At this point we drop the spectral localization and consider $u$ as a function on $\R^n$ with compact support; 
 without loss of generality, we assume that it is supported in the unit cube $Q=[0,1]^n$. By shrinking coordinate patches and rescaling, if necessary, we can always assume that $g^{ij}$ is globally defined and close to the Euclidean metric in the sense that
%in Lipschitz norm,
\begin{align}\label{g close to Euclidean metric}
\|g^{ij}-\delta^{ij}\|_{\mathrm{Lip}}\ll 1,
\end{align}
%where $Q=[0,1]^n$ is the unit cube. 
Then we also have
\begin{align}\label{a,rho close to Euclidean metric}
%\supp(\rho-1)\cup (a^{ij}-\delta^{ij})\subset Q,\quad 
\|\rho-1\|_{\mathrm{Lip}}+ \|a^{ij}-\delta^{ij}\|_{\mathrm{Lip}}\ll 1.
\end{align}
Here and in the following (until the end of Subsection \ref{subs. proof eps>1}) all norms are taken over $\R^n$ unless stated otherwise.
In particular, there exists $\kappa$ close to $1$ such for all $x,\xi\in\R^n$,
\begin{align}\label{g uniformly elliptic}
  \kappa^{-1} |\xi|^2\leq a^{ij}(x)\xi_i\xi_j\leq \kappa |\xi|^2.
\end{align}
Note that the localization has produced an 
 additional error term (the second term on the right of \eqref{resolvent equation for phi u II}; the third term can be absorbed into $G$)
 of the form $\partial_iG^i$, where $G^i=a^{ij}u\partial_j\phi$ satisfies $\sum_{i=1}^n\|G^i\|_{L^2}^2\lesssim \|u\|_{L^2}^2$. We will include this into the driving term, that is, we will replace $F+G$ by $F+G+\partial_iG^i$ below. We will write $\Vec{G}=(G_1,\ldots,G_n)$ (so that $\partial_iG^i=\Div \Vec{G}$) and $\|\Vec{G}\|_{L^2}^2=\sum_{i=1}^n\|G^i\|_{L^2}^2$.
Thus \eqref{ostensibly weaker bound} would be a consequence of the following global estimate.

 \begin{proposition}\label{Prop. Rn}
     Assume that $a^{ij}$, $\rho$ are globally defined on $\R^n$ and satisfy \eqref{a,rho close to Euclidean metric}. If
     \begin{align}\label{eq. Prop. Rn}
    \partial_i(a^{ij}\partial_ju)+\lambda^2\rho u=F+G+\Div\Vec{G},
     \end{align}
holds on $\R^n$,     
%where $u,F,G,\Vec{G}$ are supported in the unit cube, 
then
\begin{align}\label{ineq. Prop. Rn}
\|u\|_{L^q(Q)}\lesssim&  \delta\lambda^{\gamma(q)}
(\|u\|_{L^2}
+\lambda^{-1}\|\nabla u\|_{L^2}
+\|\Vec{G}\|_{L^2}
+(\eps\lambda)^{-1}\|G\|_{L^2}+\epsilon^{-1}\delta\lambda^{\gamma(q)-1}\|F\|_{L^{q'}}),
%Recall: eps>1. That is why I dont put eps^-1 in front of vec G.
\end{align}
provided that the same estimate holds when $F=0$.
 \end{proposition}

\begin{remark}\label{remark reverse reductions}
$(i)$ With reference to \eqref{resolvent equation for phi u II}, we would   replace $F$ by $\rho\phi F$, $G$ by $\rho\phi G+a^{ij}\partial_i\phi\partial_j u$. This does not change the estimate (recall that $\epsilon\geq 1)$.
%and set $G^i=a^{ij}u\partial_j\phi$. 

\noindent $(ii)$ The estimate with $F=0$ implies the spectral cluster bound~\eqref{spectral cluster estimate strong theorem}, see \cite[Cor. 5]{MR2280790}). Here, we will use the converse. Although it is rather obvious that the steps of the reduction can be reversed,
we will elaborate on this point in Subsection~\ref{subs. hyperboli eq.}. 

\noindent $(iii)$ Since in the following we will use the letter $a$ for symbols, to avoid confusion, we will again write $g^{ij}$ instead of $a^{ij}$. 
\end{remark}

\subsection{Symbol smoothing and factorization}\label{subsec. smoothing and factorization factorization}
We replace $g^{ij},\rho$ by smooth functions $g^{ij}_{\lambda},\rho_{\lambda}$, which are obtained by smoothly truncating to frequencies $|\xi|\ll \lambda$. 
\begin{lemma}\label{lemma symbol smoothing}
Let $g_{\lambda}^{ij}:=\varphi(D)g^{ij},\rho_{\lambda}(\xi):=\varphi(D)\rho$, where $\varphi$ is a bump function adapted to $|\xi|\ll \lambda$. Then
\begin{align*}
\|\rho-\rho_{\lambda}\|_{L^{\infty}}&\lesssim \lambda^{-1}\|\rho\|_{\mathrm{Lip}},\\
\|g^{ij}-g^{ij}_{\lambda}\|_{L^{\infty}}&\lesssim \lambda^{-1}\|g^{ij}\|_{\mathrm{Lip}}.
\end{align*}
\end{lemma}

\begin{proof}
We only prove this for $\rho$, the proof for $g^{ij}$ is identical.
Since $\varphi(0)=1$, we have
    \begin{align*}
        \rho(x)-\rho_{\lambda}(x)=\int_{\R^n}\varphi^{\vee}(x-y)(\rho(x)-\rho(y))\rd y,\quad x\in \R^n.
    \end{align*}
    Thus,
    \begin{align}
      |\rho(x)-\rho_{\lambda}(x)|\leq \|\rho\|_{\mathrm{Lip}(\R^n)}\int_{\R^n}|(x-y)\varphi^{\vee}(x-y)| \rd y. 
    \end{align}
By assumption, $\varphi(\xi)=\varphi_0(\xi/c\lambda)$ for some bump function $\varphi_0$ on the unit scale and some $c\ll 1$. Therefore, $\varphi^{\vee}(x)=(c\lambda)^n\varphi_0^{\vee}(c\lambda x)$. A change of variables thus yields the claimed bound.    
\end{proof}

% \begin{remark}\label{rem. special phi}
% Later we will choose $\varphi_0$ to be an even function satisfying 
% \begin{align*}
% \supp(\varphi_0)\subset B(0,1),\quad \widehat{\varphi_0}\geq \mathbf{1}_{B(0,1)}.
% \end{align*}
% To construct such a function one fixes a smooth real-valued function $\eta$ with $\supp(\eta)\subset B(0,1/2)$ and $\int_{\R^n}\eta(x)\rd x\geq 1$. It is then easy to see that a rescaling of the function $\eta\ast\eta$ will work.
% \end{remark}

By Lemma \ref{lemma symbol smoothing} we have for all $i=1,\ldots,n$,
\begin{align*}
    \|(g^{ij}-g^{ij}_{\lambda})\partial_j u\|_{L^2}\lesssim \lambda^{-1}\|\nabla u\|_{L^2}
\end{align*}
which means that the smoothing error $\partial_i(g^{ij}-g^{ij}_{\lambda})\partial_j u$
may be absorbed into the forcing term~$\Vec{G}$.
It thus suffices to prove \eqref{ineq. Prop. Rn} whenever
\begin{align*}
        \partial_i(g_{\lambda}^{ij}\partial_ju)+\lambda^2\rho_{\lambda} u=F+G+\Div\Vec{G}.
\end{align*}
To proceed, we split $u=u_{\ll \lambda}+u_{\lambda}+u_{\gg \lambda}$, where the terms are frequency-localized to $|\xi|\ll\lambda$, $|\xi|\approx\lambda$ and $|\xi|\gg \lambda$, respectively. The estimates for $u_{\ll\lambda}$ and $u_{\gg\lambda}$ follow from integration by parts and Sobolev embedding, similarly as in Proposition~ \ref{prop. elliptic}, and they do not require the spectral cluster bound (see also the proof of~\cite[Cor. 5]{MR2280790}). 

It remains to consider $u_{\lambda}:=\Gamma(D)u$, where $\Gamma$ is a bump function adapted to $|\xi|\approx \lambda$. This satisfies
\begin{align}\label{eq. for ulambda}
\partial_i(g_{\lambda}^{ij}\partial_ju_{\lambda})+\lambda^2\rho_{\lambda} u_{\lambda}=\Gamma(D)(F+G+\Div\Vec{G})+\partial_i([g_{\lambda}^{ij},\Gamma(D)]\partial_j u)+\lambda^2[\rho_{\lambda},\Gamma(D)]u.
\end{align}
Similarly as in Lemma \ref{lemma symbol smoothing} one easily proves
\begin{align}
\|[g_{\lambda}^{ij},\Gamma(D)]\partial_ju\|_{L^2}\lesssim\lambda^{-1}\|\nabla u\|_{L^2},\quad
\lambda^2\|[\rho_{\lambda},\Gamma(D)]u\|_{L^2}\lesssim \lambda\|u\|_{L^2}.
\end{align}
Hence the commutator errors can be absorbed into $\Vec{G}$ and $G$, respectively, without changing the right hand side of \eqref{ineq. Prop. Rn}. Moreover, due to the frequency truncation of $g^{ij}_{\lambda}$ and $\rho_{\lambda}$, all terms in \eqref{eq. for ulambda} are localized to frequencies $|\xi|\approx\lambda$. For instance, 
\begin{align*}
\supp(\widehat{\rho_{\lambda}u_{\lambda}})=\supp(\widehat{\rho_{\lambda}}\ast\widehat{u_{\lambda}})\subset \supp(\widehat{\rho_{\lambda}})+\supp(\widehat{u_{\lambda}}).
\end{align*}
Hence, choosing the implicit constant in the truncation $|\xi|\ll \lambda$ (see Lemma \ref{lemma symbol smoothing}) sufficiently small ensures that $\rho_{\lambda}u_{\lambda}$ is still supported in $|\xi|\approx\lambda$. Due to this frequency localization and since $\eps\geq 1$, 
\begin{align*}
    (\eps\lambda)^{-1}\|\Gamma(D)(G+\Div\Vec{G})+\partial_i([g_{\lambda}^{ij},\Gamma(D)]\partial_j u)+\lambda^2[\rho_{\lambda},\Gamma(D)]u\|_{L^2}\\
    \lesssim\|u\|_{L^2}+(\eps\lambda)^{-1}\|G\|_{L^2}+\|\Vec{G}\|_{L^2}.
\end{align*}
We may thus treat all the terms on the right of \eqref{eq. for ulambda} (except $F_{\lambda}:=\Gamma(D)F$) the same and call this collection of terms $G$ again. 
%By the Mikhlin multiplier theorem, $\Gamma(D)$ is $L^p$ bounded for any $p\in (1,\infty)$.
Note also that if \eqref{g close to Euclidean metric}, \eqref{a,rho close to Euclidean metric} hold, then by the triangle inequality we also have
\begin{align}\label{smoothed metric close to Euclidean}
\|\rho_{\lambda}-1\|_{\mathrm{Lip}}+ \|g_{\lambda}^{ij}-\delta^{ij}\|_{\mathrm{Lip}}\ll 1. 
\end{align}
For instance, by Lemma \ref{lemma symbol smoothing},
\begin{align*}
  \|\rho_{\lambda}-1\|_{L^{\infty}}\leq \|\rho-1\|_{L^{\infty}}+  \|\rho_{\lambda}-\rho\|_{L^{\infty}}\ll 1      
\end{align*}
for $\lambda\gg 1$ (which we may always assume), and $\|\nabla\rho_{\lambda}\|_{L^{\infty}}\lesssim \|\nabla\rho\|_{L^{\infty}}\ll 1$. By a partition of unity, we may actually assume that $\Gamma$ localizes to the conic region $|\xi'|\ll \xi_1\approx\lambda$, where $\xi=(\xi_1,\xi')\in\R\times\R^{n-1}$. 
It will thus suffice to prove the following version of Proposition \ref{Prop. Rn}.

\begin{proposition}\label{Prop. Rn freq. loc.}
     Assume that $g_{\lambda}^{ij}$, $\rho_{\lambda}$, globally defined on $\R^n$, are truncated to frequencies $|\xi|\ll \lambda$ and satisfy \eqref{smoothed metric close to Euclidean}.
     If
     \begin{align}\label{eq. Prop. Rn frq. loc.}
\partial_i(g_{\lambda}^{ij}\partial_ju)+\lambda^2\rho_{\lambda} u=F+G
\end{align}
holds as an equation on $\R^n$ and $u$, $F$, $G$ are Fourier supported in $|\xi'|\ll \xi_1\approx\lambda$, then     
%where $u,F,G$ are supported in the unit ball, then \jc{cannot assume support condition anymore!}
\begin{align}\label{ineq. theorem Rn freq. loc.}
\|u\|_{L^q(Q)}\lesssim  \delta\lambda^{\gamma(q)}
(\|u\|_{L^2(\R^n)}+(\eps\lambda)^{-1}\|G\|_{L^2(\R^n)}
+\epsilon^{-1}\delta\lambda^{\gamma(q)-1}\|F\|_{L^{q'}(\R^n)}),
\end{align}
provided that the same estimate holds when $F=0$.
 \end{proposition}

To proceed, we relabel $x_1=t$ and use coordinates $(t,x)\in\R\times\R^{n-1}$ instead of $(x_1,x')$. Dual coordinates are denoted by $(\tau,\xi)$. 
We will use paradifferential calculus to reduce \eqref{ineq. theorem Rn freq. loc.} to a first order equation.
For this, we will work with the following symbol classes (cf.\ \cite{MR3282983}).
\begin{definition}[Symbol classes]
Let $a(t,x,\xi,\lambda)$ be a family of symbols in $(x,\xi)\in T^*\R^d$, depending on parameters $t\in\R$ and $\lambda\geq 1$. For $\kappa\in [0,1]$, we say $a(t,x,\xi,\lambda)\in S_{\lambda,\lambda^{\kappa}}$ if, for all multi-indices $\alpha,\beta$,
\begin{align}\label{symbols Slambda 1}
|\partial_{t,x}^{\alpha}\partial_{\xi}^{\beta}a(t,x,\xi,\lambda)|&\lesssim_{\alpha,\beta}\lambda^{-|\beta|+\kappa |\alpha|}.
\end{align}
We say $a(t,x,\xi,\lambda)\in C^kS_{\lambda,\lambda^{\kappa}}$ if the stronger bound 
\begin{align}\label{symbols Slambda 2}
|\partial_{t,x}^{\alpha}\partial_{\xi}^{\beta}a(t,x,\xi,\lambda)|&\lesssim_{\alpha,\beta}\lambda^{-|\beta|+\kappa(|\alpha|-k)_+}
\end{align}
holds, where $x_+=\max(x,0)$. 
\end{definition}
%In the proof of Theorem \ref{theorem spectral cluster to resolvent stronger version} 
We will only need the case $k=1$, $\kappa=1$ and $d=n-1$. Symbols 
will initially be defined in the phase space region $B_{\lambda}=\{|(t,x)|\leq 1$, $|\xi|\ll \tau\approx\lambda\}$ and extended globally (consistently with the symbol class).
 We sometimes suppress the dependence on $\lambda$, i.e.\ we write $a(t,x,\xi)$ instead of $a(t,x,\xi,\lambda)$. 
 %Our symbols will also be smooth in $t$. 
 As remarked in \cite{MR3282983}, since $a\in S_{\lambda,\lambda}$ is simply a bounded family of zero order symbols in the Hörmander class $S^0_{0,0}$, rescaled by $(t,x,\xi)\mapsto (\lambda t,\lambda x,\xi/\lambda)$, the Kohn-Nirenberg quantization $a(t,x,D_x)$ and the Weyl 
quantization $a^w(t,x,D_x)$ are $L^2$ bounded (with uniform bounds in $t$ and $\lambda$) \cite{MR517939}. We will use the Weyl quantization here. 
The asymptotic laws for composition and adjoint for symbols $a,b\in C^1S_{\lambda,\lambda}$ hold to first order, viz.
\begin{equation}\label{first order calculus}
\begin{split}
    &a^w(t,x,D_{x})b^w(t,x,D_{x})=(ab)^w(t,x,D_{x})+\lambda^{-1}R_1,\\
&a^w(t,x,D_{x})^*=(\overline{a})^w(t,x,D_{x})+\lambda^{-1}R_2,
\end{split}
\end{equation}  
where the errors 
%$R_1,R_2\in Op^w(S_{\lambda,\lambda})$
$R_1,R_2$ are quantizations of symbols in $S_{\lambda,\lambda}$ and hence bounded in $L^2(\R^{d})$.

Since $g$ is pointwise close to the flat metric (recall \eqref{g close to Euclidean metric}) and $\lambda\gg 1$, Lemma \ref{lemma symbol smoothing} shows that $g_{\lambda}^{11}(x)>0$ for $x\in Q$. We may then factorize 
\begin{align}\label{factorization}
-\langle (\tau,\xi),g_{\lambda}^{-1}(t,x)(\tau,\xi)\rangle+\lambda^2=g_{\lambda}^{11}(t,x)(\tau+\widetilde{a}(t,\xi',\lambda))(\tau-a(t,\xi',\lambda))
\end{align} 
on $B_{\lambda}$,
%for all $(t,x)\in \R\times\R^{n-1}$ and $|\xi|\ll \tau\lesssim\lambda$,
where $\widetilde{a},a>0$ are symbols in $\lambda C^1S_{\lambda,\lambda}$, $\langle\cdot,\cdot\rangle$ is the Euclidean inner product and $g_{\lambda}^{-1}$ is the matrix with entries $g_{\lambda}^{ij}$. 
%This holds since each derivative of $g_{\lambda}$ beyond the first costs a factor of $\lambda$. 
We smoothly truncate the $(t,x)$ Fourier transform of $a$ to frequencies $|(\tau,\xi)|\ll\lambda$. The resulting symbol $a_{\lambda}$ is again in $\lambda C^1S_{\lambda,\lambda}$, and the difference $a-a_{\lambda}$ is in $S_{\lambda,\lambda}$ (the proof is similar to that of Lemma \ref{lemma symbol smoothing} and is omitted). We also extend $a_{\lambda}$ globally so that $a_{\lambda}(t,x,\xi)=\lambda$ for $|\xi|\gg\lambda$, while preserving the frequency localization in the $(t,x)$ variables. The same procedure is applied to $\widetilde{a}$.

% % Since the function $u$ in Proposition \ref{Prop. Rn freq. loc.} is supported in the unit ball, then $\Gamma(D_{t,x}) u$ is microlocalized to $B_{\lambda}$. 
% % %We may thus assume that $u$ itself is microlocalized and prove \eqref{ineq. theorem Rn freq. loc.} with $u$ on the left. 
% % Hence there exists some bump function $\chi$ adapted to $B_{\lambda}$ such that $$\|\Gamma(D_{t,x}) u-\chi^w(t,x,D_t,D_x)u\|_{H^N}\lesssim\lambda^{-N}\|u\|_{H^{-N}}$$ for any $N>0$. %Again, Sobolev embedding (and its dual) yields better estimates than needed for this difference. Indeed, for $N$ sufficiently large,
% % % \begin{align*}
% % %   \|\Gammau-\chi^wu\|_{L^q}\lesssim\|\Gammau-\chi^wu\|_{H^N}\lesssim\|u\|_{H^{-N}}  
% % %   \lesssim \|(\Delta_g+(\lambda+\I\eps))^{-1}u\|_{H^{-N+2}}\lesssim \|F\|_{L^{q'}}.
% % % \end{align*}
% % By Sobolev embedding, the error may thus be absorbed into the first term on the right of \eqref{ineq. theorem Rn freq. loc.}.
% Moving forward, it thus suffices to prove \eqref{ineq. theorem Rn freq. loc.} with $\chi^w u$ on the left. 
Since the norm in Proposition \ref{Prop. Rn freq. loc.} is taken over the unit cube,
we can replace $u$ by $\chi^w u$, where $\chi$ is adapted to 
 %the $(x,\xi)$-projection of 
 $B_{\lambda}$ and $\chi^w=\chi^w(t,x,D_t,D_x)$. Indeed, if 
 %$\widehat{u}(t,\cdot)$ is supported in $|\xi|\ll\lambda$ and 
 $\chi(t,x,\tau,\xi)=1$ on the support of $\phi(t,x)\widehat{u}(\tau,\xi)$, then
$$\|\phi u-\chi^wu\|_{H^N(\R^n)}\lesssim\lambda^{-N}\|u\|_{H^{-N}(\R^n)}$$ for any $N>0$.
% %Again, Sobolev embedding (and its dual) yields better estimates than needed for this difference. 
% and thus for $N$ sufficiently large,
% \begin{align*}
%   \|\Gamma(D)u-\chi^wu\|_{L^q}\lesssim\|\Gamma(D)u-\chi^wu\|_{H^N}\lesssim \lambda^{-N}\|u\|_{L^2}.  
% \end{align*}
By Sobolev embedding, the error may thus be absorbed into the first term on the right of \eqref{ineq. theorem Rn freq. loc.}.
Moving forward, we may thus assume that $u$ is microlocalized, i.e.\ 
\begin{align}\label{u is microlocalized}
    u=\chi^wu+\mathcal{O}_{H^{-N}\to H^N}(\lambda^{-N})u \quad\forall N>0.
\end{align}
First order calculus \eqref{first order calculus} and the factorization \eqref{factorization} then yield, with $D=(D_t,D_x)$,
\begin{align}\label{factorization operators}
-\langle D,g_{\lambda}^{-1}(t,x)D\rangle u+\lambda^2u=-g_{\lambda}^{11}(t,x)(D_{t}+\widetilde{a}_{\lambda}^w(t,x,D_{x}))(D_{t}-a_{\lambda}^w(t,x,D_{x}))u+\lambda Ru
\end{align}
%for some $r\in \lambda S_{\lambda,\lambda}$. 
for some $R=\mathcal{O}_{L^2\to L^2}(1)$. Due to \eqref{u is microlocalized}, the factor $g^{11}_{\lambda}(t,x)(D_{t}+\widetilde{a}_{\lambda}(t,x,D_{x}))$ admits a parametrix $Q$ with symbol in $\lambda^{-1}S_{\lambda,\lambda}$.
Therefore, 
\begin{align*}
    -Q g_{\lambda}^{11}(t,x)(D_{t}+\widetilde{a}_{\lambda}^w(t,x,D_{x}))u=u+\lambda^{-1}R_1 u
\end{align*}
for some $R_1=\mathcal{O}_{L^2\to L^2}(1)$. Hence, if $u$ satisfies \eqref{eq. Prop. Rn frq. loc.}, then
\begin{align*}
    (D_{t}-a_{\lambda}^w(t,x,D_{x}))u=Q(F+G)-\lambda^{-1}R_1(D_{t}-a_{\lambda}^w(t,x,D_{x}))u-\lambda QRu.
\end{align*}
We write the right hand side as $F'+G'$, with $F'=QF$ and $\|G'\|_{L^2}\lesssim \|u\|_{L^2}+\lambda^{-1}\|G\|_{L^2}$. We will also
need that operators with $S_{\lambda,\lambda}$ symbols are $L^p$ bounded for $1<p<\infty$. Since we already know that they are $L^2$ bounded, this follows from Calderón--Zygmund theory since their Schwartz kernels satisfy standard estimates. In fact, after rescaling $(t,x,\xi)\mapsto (\lambda t,\lambda x,\xi/\lambda)$, it suffices to estimate the kernel $K$ of an operator corresponding to an $S_{0,0}$ symbol, localized to frequencies $|\xi|\lesssim 1$; this satisfies the trivial bound $|K(x,y)|+|\nabla_{x,y} K(x,y)|\lesssim 1$.
As a consequence, we have $\|F'\|_{L^{q'}}\lesssim \lambda^{-1}\|F\|_{L^{q'}}$. We work again with $F,G$ instead of $F',G'$. In other words, it suffices to prove the following proposition.
\begin{proposition}\label{Prop. first order 1}
Assume that $a_{\lambda}\in\lambda C^1S_{\lambda,\lambda}$ has $(t,x)$-frequencies $|(\tau,\xi)|\ll\lambda$ and $a_{\lambda}(t,x,\xi)=\lambda$ for $|\xi|\gg\lambda$. If
$u$ satisfies
\begin{align}\label{first order equation (revision)}
    (D_{t}-a_{\lambda}^w(t,x,D_{x}))u=F+G,
\end{align}
%and $\widehat{u}$ is supported in the conic region $|\xi'|\ll \tau\lesssim\lambda$, then we have the bound
%and \eqref{u is microlocalized}, 
on $[0,1]\times\R^{n-1}$,
then we have 
\begin{align}\label{ineq. first order equation (revision) before Prop.}
    \|\chi^wu\|_{L^q([0,1]\times\R^{n-1})}\lesssim  \delta\lambda^{\gamma(q)}
(\|u\|_{L^2(\R^n)}+\eps^{-1}\|G\|_{L^2(\R^n)}
+\eps^{-1}\delta\lambda^{\gamma(q)}\|F\|_{L^{q'}(\R^n)}),
\end{align}
provided that the same estimate holds for $F=0$.
\end{proposition}

\subsection{Microlocalized quasimode estimate}
\label{subsct. Microlocalized quasimode estimate}
We will prove Proposition \ref{Prop. first order 1} by using the spectral cluster bound \eqref{spectral cluster estimate strong theorem} (for the second time). 
As explained in Remark \ref{remark reverse reductions} (ii), we 
first need to convert the spectral cluster bound to a suitable quasimode bound (coresponding to the case $F=0$ in Proposition \ref{Prop. Rn freq. loc.}). First, by the implication $(a)\implies (c)$ in Corollary \ref{corollary equivalence cluster and quasimode}, the spectral cluster bound \eqref{spectral cluster estimate strong theorem} implies the quasimode bound 
\begin{align}\label{spectral cluster reduction 1}
    \|u\|_{L^q(M)}\lesssim \delta\lambda^{\gamma(q)}(\|u\|_{L^2(M)}+(\eps\lambda)^{-1}\|(\Delta_g+\lambda^2)u\|_{L^2(M)})
\end{align}
for spectrally localized functions, i.e.\ $\Pi_{\leq 2\lambda}u=u$. We can remove the assumption of spectral localization since we have the stronger elliptic estimate \eqref{elliptic estimate Lq'L2} for $\Pi_{>2\lambda}u$. After extending $g$ globally (subject to \eqref{g close to Euclidean metric}) like before, we may consider this as a problem on $\R^n$.
Setting $\rho=|g|^{1/2}$, $a^{ij}=|g|^{1/2}g^{ij}$ and relabelling $a^{ij}$ to $g^{ij}$ again, we can replace $(\Delta_g+\lambda^2)u$ by $\partial_i(g^{ij}\partial_j u)+\lambda^2\rho u$, where we use that $\|\rho^{-1}\|_{L^{\infty}}\lesssim 1$, due to \eqref{a,rho close to Euclidean metric}. Therefore,
\begin{align}\label{spectral cluster reduction 2}
    \|u\|_{L^q(Q)}\lesssim \delta\lambda^{\gamma(q)}(\|u\|_{L^2(\R^n)}+(\eps\lambda)^{-1}\|\partial_i(g^{ij}\partial_j u)+\lambda^2\rho u\|_{L^2(\R^n)})
\end{align}
The key point is that this inequality is microlocalizable and stable under lower order perturbations (here we use again that $\eps\geq 1)$. After microlocalization to $B_{\lambda}$, we may thus again replace $g^{ij},\rho$ by the smoothed functions $g_{\lambda}^{ij},\rho_{\lambda}$ and use the first order calculus, in particular the factorization \eqref{factorization operators},
%Since  $D_{t}+\widetilde{a}_{\lambda}=\mathcal{O}_{L^2\to L^2}(\lambda)$ on microlocalized functions, we have
to obtain the microlocalized quasimode estimate
\begin{align}\label{microlocalized quasimode} 
\|\chi^wu\|_{L^q_{t,x}}
%&\lesssim \delta\lambda^{\gamma(q)}(\|u\|_{L^2(\R^n)}+\lambda^{-1}\|
%    (D_{t}+\widetilde{a}_{\lambda}(t,x,D_{x}))(D_{t}-a_{\lambda}(t,x,D_{x}))u\|_{L^2(\R^d)})\\
\lesssim \delta\lambda^{\gamma(q)}(\|u\|_{L^2_{t,x}}+\eps^{-1}\|
    (D_{t}-a_{\lambda}^w(t,x,D_x))u\|_{L^2_{t,x}})
\end{align}
with norms over $(t,x)\in[0,1]\times \R^{n-1}$ and for all $u\in L^2_{t,x}$.

\subsection{From homogeneous to inhomogeneous estimates}\label{subs. hyperboli eq.}
We will use the Christ--Kiselev lemma to upgrade \eqref{microlocalized quasimode} to the stronger estimate \eqref{ineq. first order equation (revision) before Prop.}. In view of the discussion after \eqref{ineq. first order equation (revision) before Prop.}, it remains to prove the following lemma.

%Then Proposition \ref{Prop. Rn freq. loc.} would follow from the following version.

\begin{lemma}\label{lemma first order 2}
Assume that the microlocalized quasimode estimate \eqref{microlocalized quasimode} holds for every bump function $\chi$ supported in $B_{\lambda}=\{|(t,x)|\leq 1$, $|\xi|\ll \tau\approx\lambda\}$. 
Let $[t_0,t_0+T]\subset[0,1]$ and assume that $\chi,\widetilde{\chi}$  are supported in $B_{\lambda}$. Assume that 
     %$a_{\lambda}\in \lambda C^1S_{\lambda,\lambda}$ and that 
     $u$ satisfies
\begin{align}\label{first order hyperbolic equation}
(D_{t}-a_{\lambda}^w(t,x,D_{x}))u=\widetilde{\chi}^wF+G,\quad u(t_0)=u_0,
\end{align}
on $[t_0,t_0+T]\times \R^{n-1}$. Then we have
     %, $u$ is microlocalized \eqref{u is microlocalized} and $\widetilde{\chi},\psi,\widetilde{\psi}$ are as above. 
\begin{align}\label{ineq. first order equation (revision)}
\|\chi^wu\|_{L^q_{t,x}}\lesssim  \delta\lambda^{\gamma(q)}T^{1/2}
(\|u_0\|_{L^2_{x}}+T^{1/2}\|G\|_{L^2_{t,x}}
+\delta\lambda^{\gamma(q)}T^{1/2}\|F\|_{L^{q'}_{t,x}}),
\end{align}
with norms over $[t_0,t_0+T]\times \R^{n-1}$. Moreover, we have the energy estimate
\begin{align}\label{energy estimate (revision)}
\|u\|_{L^{\infty}_tL^2_x}\lesssim  \|u_0\|_{L^2_{x}}+T^{1/2}\|G\|_{L^2_{t,x}}
+\delta\lambda^{\gamma(q)}T^{1/2}\|F\|_{L^{q'}_{t,x}}.
\end{align}
 \end{lemma}

\begin{proof}
Without loss of generality we may assume $t_0=0$ (the symbol bounds on $a_{\lambda}$ are uniform in $t$ over a compact interval).
  Let $S(t,s):L^2(\R^{n-1})\to L^2(\R^{n-1})$ be the unitary propagator associated to the bounded self-adjoint operator $a_{\lambda}^w$; it satisfies
\begin{align*}
    (D_{t}-a_{\lambda}^w(t,x,D_{x}))S(t,s)u_0=0,\quad S(s,s)u_0=u_0
\end{align*}
for $t,s\in[0,T]$ and $u_0\in L^2(\R^{n-1})$. Observe the group property $S(t,r)S(r,s)=S(t,s)$ and $S(t,s)^*=S(t,s)^{-1}=S(s,t)$.
Set $S(t)=S(t,0)$.
Applying \eqref{microlocalized quasimode} to $S(t)u_0(x)$ we get
\begin{align}\label{S(t,s)}
    \|\chi^wS(t)u_0\|_{L_{t,x}^q}\lesssim \delta\lambda^{\gamma(q)}\|S(t)u_0\|_{L_{t,x}^2}\leq \delta\lambda^{\gamma(q)} T^{1/2}\|S(t)u_0\|_{L_{t}^{\infty}L_{x}^{2}}
    \leq\delta\lambda^{\gamma(q)} T^{1/2}\|u_0\|_{L_{x}^{2}},
\end{align}
where we used the unitarity of $S(t)$ and H\"older in the last estimate (recalling that $t\in[0,T]$).
By duality, this implies that
\begin{align}\label{dual S(t,s)}
    \|\int_0^{t}S(s)^*\chi^wf(s)\rd s\|_{L^{\infty}_tL^2_x}\lesssim \delta\lambda^{\gamma(q)} T^{1/2}\|f\|_{L^{q'}_{t,x}}.
    % \sup_{t\in [0,T]}\|\int_0^{\epsilon^{-1}}S(t,s)^*\chi^wf(s)\rd s\|_{L^2_x}\lesssim \rho T^{1/2}\|f\|_{L^{q'}_{t,x}}.
\end{align}
Indeed, for $u_0\in L^2_x$, $f\in L^{q'}_{t,x}$, we have
\begin{align*}
    \sup_{t\in[0,T]}|\langle\int_0^{t}S(t)^*\chi^wf(t)\rd t,u_0\rangle_{L^2_x}|\leq \|f\|_{L^{q'}_{t,x}}\|\chi^wS(t)u_0\|_{L_{t,x}^q}\lesssim \delta\lambda^{\gamma(q)} T^{1/2}\|f\|_{L^{q'}_{t,x}}\|u_0\|_{L_{x}^{2}}.
    % |\langle\int_0^{T}S(t,s)^*\chi^wf(s)\rd s,u_0\rangle_{L^2_x}|\leq \|f\|_{L^{q'}_{t,x}}\|\chi^wS(t,s)u_0\|_{L_{t,x}^q}\lesssim \rho T^{1/2} \|f\|_{L^{q'}_{t,x}}\|u_0\|_{L_{x}^{2}}.
\end{align*}
Of course, this also holds when $\chi^w$ is replaced by $\widetilde{\chi}^w$. Hence, if 
%$r\in [0,T]$ and 
$f,g\in L^{q'}_{t,x}$, then 
%\eqref{dual S(t,s)} yields
\begin{align*}
&|\langle\int_0^{T}\chi^wS(t)S(s)^*\widetilde{\chi}^wf(s)\rd s,g\rangle_{L^2_{t,x}}|\\
&=|\langle\int_0^{T}S(s)^*\widetilde{\chi}^wf(s)\rd s,\int_0^{T}S(t)^*\chi^wg(t)\rd t\rangle_{L^2_{x}}|\\
&\leq \|\int_0^{T}S(s)^*\tilde{\chi}^wf(s)\rd s\|_{L^2_{x}} \|\int_0^{T}S(t)^*\chi^wg(t)\rd t\|_{L^2_{x}}\\
&\lesssim \delta^2\lambda^{2\gamma(q)}T\|f\|_{L^{q'}_{t,x}}\|g\|_{L^{q'}_{t,x}}.
% &|\langle\int_0^{T}\chi^wS(t,r)S(s,r)^*\tilde{\chi}^wf(s),g\rangle_{L^2_{t,x}}|\\
% &=|\langle\int_0^{T}S(s,r)^*\tilde{\chi}^wf(s)\rd s,\int_0^{T}S(t,r)^*\chi^wg(t)\rd t\rangle_{L^2_{x}}|\\
% &\leq \|\int_0^{T}S(s,r)^*\tilde{\chi}^wf(s)\rd s\|_{L^2_{x}} \|\int_0^{T}S(t,r)^*\chi^wg(t)\rd t\|_{L^2_{x}}\\
% &\lesssim \rho^2T\|f\|_{L^{q'}_{t,x}}\|g\|_{L^{q'}_{t,x}}.
\end{align*}
By the group property, this implies that
\begin{align}\label{TTstar S(t,s)}
   \|\int_0^{T}\chi^wS(t,s)\tilde{\chi}^wF(s)\rd s\|_{L^q_{t,x}}\lesssim \delta^2\lambda^{2\gamma(q)}T\|F\|_{L^{q'}_{t,x}}.
    % \sup_{r\in [0,T]}\|\int_0^{T}\chi^wS(t,r)S(s,r)^*\tilde{\chi}^wf(s)\rd s\|_{L^q_{t,x}}\lesssim \rho^2T\|f\|_{L^{q'}_{t,x}}.
\end{align}
%Note that, by the group property, $S(t,r)S(s,r)^*=S(t,s)$.
Thus, an application of the Christ--Kiselev lemma \cite{MR1809116}, as formulated e.g.\ in  \cite{MR1789924,MR2233925}, yields 
\begin{align}\label{Christ--Kiselev}
    \|\int_0^{t}\chi^wS(t,s)\tilde{\chi}^wF(s)\rd s\|_{L^q_{t,x}}\lesssim \delta^2\lambda^{2\gamma(q)}T\|F\|_{L^{q'}_{t,x}}.
\end{align}
By unitarity of $S(t,s)$, \eqref{S(t,s)} also implies
\begin{align*}
         \sup_{s\in [0,T]}\|\chi^wS(t,s)u_0\|_{L_{t,x}^q}\lesssim \delta\lambda^{\gamma(q)} T^{1/2}\|u_0\|_{L_{x}^{2}}.
\end{align*}
Minkowski's and H\"older's inequality then yield
\begin{align}\label{Mink}
    \|\int_0^{T}\mathbf{1}_{s<t} \chi^wS(t,s)G(s)\rd s\|_{L^q_{t,x}}\leq     
    \int_0^{T}\|\chi^wS(t,s)G(s)\|_{L^q_{t,x}}\rd s\lesssim %\delta\lambda^{\gamma(q)}\|G\|_{L^1_tL^2_x}
    \delta\lambda^{\gamma(q)}T\|G\|_{L^2_{t,x}}.
\end{align}  
Assume now that $u$ satisfies \eqref{first order hyperbolic equation}. By Duhamel's formula,
\begin{align}\label{Duhamel}
u(t)=S(t)u_0+\I\int_0^t S(t,s)G(s)\rd s+\I\int_0^t S(t,s)\widetilde{\chi}^wF(s)\rd s.
\end{align}
Hence \eqref{ineq. first order equation (revision)} follows from \eqref{Mink}, \eqref{Christ--Kiselev} and \eqref{S(t,s)}.
Further, \eqref{Duhamel} and unitarity of $S(t,s)$ implies
\begin{align}
    \|u\|_{L^{\infty}_tL^2_x}&\leq \|u_0\|_{L^2_x}+\int_0^T\|G(s)\|_{L^2_x}\rd s+\|\int_0^tS(t,s)\widetilde{\chi}^wF(s)\rd s\|_{L^{\infty}_tL^2_x}\\
   & \lesssim \|u_0\|_{L^2_x}+T^{1/2}\|G\|_{L^2_{t,x}}+\delta\lambda^{\gamma(q)} T^{1/2}\|F\|_{L^{q'}_{t,x}},
\end{align}
where we used \eqref{dual S(t,s)} for the third term. In addition, we used that, by the group property, $S(t,s)=S(s,t)^*=(S(s)S(t)^*)^*=S(t)S(s)^*$. This proves the energy estimate \eqref{energy estimate (revision)}.
\end{proof}

\begin{remark}
The proof is similar to that of \cite[Prop. 4.8]{MR2094851}. The first difference is that we omitted the curvature condition (A3) in \cite{MR2094851} and replaced it by the (black box) assumption that the spectral cluster bound (or the equivalent quasimode estimate \eqref{microlocalized quasimode}) holds. In contrast, the real principal type condition (A2) in \cite{MR2094851} is crucial for the reduction to the first order equation \eqref{first order equation (revision)}. 
In the context of Theorem~\ref{theorem spectral cluster to resolvent intro}, the condition (A3) is of course also satisfied:
The fibers of the cosphere bundle $|\xi|_g=1$ have everywhere nonzero Gaussian curvature. But this information is now encoded in the spectral cluster bound. The second difference is that \cite[Proposition 4.8]{MR2094851} applies only to symbols $a_{\lambda^{1/2}}$ in the smaller class $C^2S_{\lambda,\lambda^{1/2}}$, at which one would arrive, after similar reductions as in the proof of Theorem \ref{theorem spectral cluster to resolvent intro}, if one had started with a $C^{1,1}$ metric. Lipschitz functions are well approximated by $C^{1,1}$ functions on the $\lambda^{-1/3}$ scale. On this scale the wave packet parametrix constructions work,
%\footnote{Koch and Tataru \cite{MR2094851} use Gaussian coherent states (the FBI transform), while Smith \cite{MR2262171} uses states with compact Fourier support.},
and one can prove the spectral cluster (homogeneous Strichartz) and resolvent (inhomogeneous Strichartz) bounds concurrently. These follow from dispersive ($L^1\to L^{\infty}$) estimates on the regularized flow corresponding to $a_{\lambda^{1/2}}$. For such estimates, the causal factor $\mathbf{1}_{t\geq s}$ is innocuous. On the other hand, the flow of $a_{\lambda}$ in Lemma~\ref{lemma first order 2} is not regular enough for the dispersive estimates to hold. The third difference is that we use a microlocalization in the full phase space $T^*\R_{t,x}^n$, whereas Koch and Tataru \cite{MR2094851} only assume microlocalization in $T^*\R^{n-1}_x$ (hence their estimate is stronger). We could achieve the same if we assume that \eqref{microlocalized quasimode} holds under this weaker assumption. However, such an estimate does in general not follow from the spectral cluster estimate \eqref{spectral cluster estimate strong theorem}.
\end{remark}

\subsection{Proof of Proposition \ref{Prop. first order 1}}\label{subs. proof eps>1}\label{subsec. proof eps>1}
We show that Lemma \ref{lemma first order 2} implies Proposition \ref{Prop. first order 1}, which will finish the proof of Theorem \ref{theorem spectral cluster to resolvent stronger version} in the case $\eps\geq 1$. 
% It is sufficient to prove Proposition \ref{Prop. first order 1} with $[0,1]$ on the left of \eqref{ineq. first order equation (revision) before Prop.} replaced by an arbitrary subinterval $I$ compactly contained in $[0,1]$.

Let $\widetilde{\chi}$ be another bump function adapted to 
$B_{\lambda}$ and such that $\widetilde{\chi}=1$ on $\supp\chi$.
First order calculus allows us to replace $F$ in \eqref{first order equation (revision)} by $\widetilde{\chi}^wF$ (the error can be absorbed into $G$ without changing the estimate).
We also note that \eqref{first order equation (revision)} is a local equation in $t$ since $a_{\lambda}^w(t,x,D_{x})$ commutes with multiplication by functions of $t$ only; thus,
\begin{align}\label{localization with phi(t)}
        (D_{t}-a_{\lambda}^w(t,x,D_{x}))(\phi(t) u)=\phi(t)(\widetilde{\chi}^wF+G)-\I\phi'(t)u
\end{align}
on $\R^{n}$.
% Therefore, committing an error of $\mathcal{O}(\lambda^{-N})\|u\|_{H^{-N}}$ (which is acceptable), we may 
% %assume that $u(0):=u|_{t=0}=0$ and 
% take norms on the right of \eqref{ineq. first order equation (revision) before Prop.} over $t$ in  a compact interval. In fact, we will show that it suffices to prove \eqref{ineq. first order equation (revision) before Prop.} on intervals of length $\eps^{-1}$. 
%Let $I_j=[(j-1)\eps^{-1},j\eps^{-1}]$, and 
%and $I_j^*$ its double (same midpoint, twice the length). 
 Let $1=\sum_{j=1}^K\phi_j(t)$, $t\in [0,1]$, (with $K\approx\eps$) be a smooth partition of unity such that $\phi_j$ is compactly supported in an interval $I_j$ of length $\eps^{-1}$ and $\|\phi_j'\|_{L^{\infty}}\lesssim \eps$.
 Let $\psi_j$ be another bump function supported in $I_j$ such that $\psi_j(t)=1$ on the support of $\phi_j$.
If $\phi$ in \eqref{localization with phi(t)} is one the $\psi_j$'s, the right hand side can be decomposed as $\chi_1^wF_j+G_j$, with $F_j=\psi_j(t)\widetilde{\chi}^wF$ and $G_j=\psi_j(t)G-\I\psi_j'(t)u+(1-\chi_1^w)\psi_j(t)\widetilde{\chi}^wF$, where $\chi_1$ is supported in $B_{\lambda}$ and equals $1$ on the support of $\widetilde{\chi}$. In particular,  
 \begin{align}
     \|G_j\|_{L^2(\R^n)}&\lesssim \|G\|_{L^2(I_j\times\R^{n-1})}     +\eps\|u\|_{L^2(I_j\times\R^{n-1})}+\lambda^{-N}\|F\|_{H^{-N}(\R^n)},\label{sum Gj}\\
\|F_j\|_{L^{q'}(\R^n)}&\lesssim \|F\|_{L^{q'}(I_j\times\R^{n-1})}\label{sum Fj},
 \end{align}
 %with norms over $\R^n$ and
 where we used standard pseudodifferential calculus for the first estimate. In the following, $N>0$ is arbitrary and my change from line to line (the constant of course depends on $N$, but we will not indicate this). Lemma \ref{lemma first order 2} (with $T=\eps^{-1}$) 
 %with $t_0$ replaced by $t_j$ and $T=\eps^{-1}$, 
 implies
 \begin{align*}
\|\chi^w(\psi_ju)\|_{L^q(I_j\times\R^{n-1})}&\lesssim  \delta\lambda^{\gamma(q)}\eps^{-1}
(\|G_j\|_{L^2(\R^{n})}
+\delta\lambda^{\gamma(q)}\|F_j\|_{L^{q'}(\R^{n})}),
 \end{align*}
 %with norms over $I_j\times\R^{n-1}$ and 
 for all $1\leq j\leq K$. Since $\phi_j(t)(1-\psi_j(t))=0$, it follows again by standard pseudodifferential calculus that
 \begin{align}
     \|\phi_j\chi^wu\|_{L^q(I_j\times\R^{n-1})}&\lesssim  \delta\lambda^{\gamma(q)}\eps^{-1}
(\|G_j\|_{L^2(\R^n)}
+\delta\lambda^{\gamma(q)}\|F_j\|_{L^{q'}(\R^n)})+\lambda^{-N}\|u\|_{H^{-N}(\R^n)}.
 \end{align}
 Raising this to the power $q$ and summing over $j$ yields 
 \begin{align}
    \|\chi^wu\|_{L^q(I\times\R^{n-1})}^q&\lesssim  (\delta\lambda^{\gamma(q)})^q
\big(\eps^{-q}\sum_{j=1}^K\|G\|_{L^2(I_j\times\R^{n-1})}^q+\|u\|_{L^2(I_j\times\R^{n-1})}^q\big)\\
&+(\delta^2\lambda^{2\gamma(q)}\eps^{-1})^q\sum_{j=1}^K\|F\|_{L^{q'}(I_j\times\R^{n-1})}^q+\lambda^{-N}\|u\|_{H^{-N}(\R^n)}^q+\lambda^{-N}\|F\|_{H^{-N}(\R^n)}^q,
 \end{align}
where we used \eqref{sum Gj} and \eqref{sum Fj}. Since $q'\leq 2\leq q$ we have $\ell^q\supset\ell^2\supset \ell^{q'}$ and hence
\begin{align*}
  \|\chi^wu\|_{L^q(I\times\R^{n-1})}^q&\lesssim  (\delta\lambda^{\gamma(q)})^q
\big(\eps^{-2}\sum_{j=1}^K\|G\|_{L^2(I_j\times\R^{n-1})}^2+\|u\|_{L^2(I_j\times\R^{n-1})}^2\big)^{q/2}\\
&+(\delta^2\lambda^{2\gamma(q)}\eps^{-1})^q\big(\sum_{j=1}^K\|F\|_{L^{q'}(I_j\times\R^{n-1})}^{q'}\big)^{q/q'}+\lambda^{-N}(\|u\|_{H^{-N}(\R^n)}^q+\|F\|_{H^{-N}(\R^n)}^q)\\
&\lesssim (\delta\lambda^{\gamma(q)})^q(\eps^{-q}\|G\|_{L^2(\R^{n})}^q+\|u\|_{L^2(\R^{n})}^q)+(\delta^2\lambda^{2\gamma(q)}\eps^{-1})^q \|F\|_{L^{q'}(\R^n)}^q,
\end{align*}
where we used (dual) Sobolev embedding $L^{q'}\subset H^{-N}$ and $\delta\eps^{-1}\geq \lambda^{-1/2}$ to absorb the $\lambda^{-N}$ terms. This concludes the proof of Proposition \ref{Prop. first order 1}.

\subsection{The case $\epsilon< 1$}\label{subs. eps<1}

We now turn to the case $\epsilon<1$.
In view of the elliptic estimates in Proposition \ref{prop. elliptic} (case (b) applies to arbitrary $\eps$) it is sufficient to prove 
\begin{align*}
\|(\Delta_g+(\lambda+\I \epsilon)^2)^{-1}\chi(\sqrt{-\Delta_g}/\lambda)\|_{L^{q'}\to L^{q}}\lesssim \epsilon^{-1}\delta^2\lambda^{2\gamma(q)-1}.
\end{align*}
where $\chi(\tau)$ is a bump function adapted to $|\tau|\leq 2$ (here $\tau$ is unrelated to the dual variable of $t$ in the previous subsections). We partially follow the strategy of \cite{BourgainShaoSoggeEtAl2015} and represent the resolvent operator as a cosine transform,
\begin{align*}
\mathfrak{S}f:=(\Delta_g+(\lambda+\I\epsilon)^2)^{-1}f=\frac{1}{\I(\lambda+\I\epsilon)}\int_0^{\infty}\e^{\I \ell\lambda-\ell\epsilon}\cos(\ell\sqrt{-\Delta_g})f\rd \ell.
\end{align*} 
We will also need the localized version
\begin{align*}
\mathfrak{S}_{\rm loc} f=\frac{1}{\I(\lambda+\I\epsilon)}\int_0^{\infty}\rho(\ell)\e^{\I \ell\lambda-\ell\epsilon}\cos(\ell\sqrt{-\Delta_g})f\rd \ell,
\end{align*}
where $\rho$ is a bump function adapted to $[-\delta_0,\delta_0]$, with $\delta_0\ll 1$ smaller than half the injectivity radius of $(M,g)$. 
We will show that
\begin{align}\label{local bound}
    \|\mathfrak{S}_{\rm loc}\chi(\sqrt{-\Delta_g}/\lambda)\|_{L^{q'}\to L^q}\lesssim \lambda^{2\gamma(q)-1},
\end{align}
This is better than the right hand side of \eqref{resolvent estimate strong theorem} since $\delta^2\epsilon^{-1}\geq 1$. for the proof we write 
\begin{align*}
    \mathfrak{S}_{\rm loc}f=(\lambda+\I\epsilon)^{-1}\sum_{\pm}\psi(\sqrt{-\Delta_g}\pm \lambda)f,
\end{align*}
where $\psi$ is the Fourier transform of $c\rho(\ell)\e^{-\epsilon\ell}H(\ell)$; here $H$ is the Heaviside function and $c$ is a constant that will change from line to line. 
%Since $\psi$ is the Fourier transform of a rapidly decaying function it is smooth on the unit scale (i.e.\ bounds independent of $\eps$). More precisely,
Then
\begin{align*}
 |\partial^k\psi(\tau)|=|\mathcal{F}(c\ell^k\rho(\ell)\e^{-\epsilon\ell}H(\ell))|\leq c\int_{0}^{\infty}\ell^k\rho(\ell)\rd \ell,   
\end{align*}
which means that $\psi$ is smooth on the unit scale (i.e.\ bounds independent of $\eps$). 
Moreover,
since the Fourier transform of $\e^{-\epsilon\ell}H(\ell)$ equals $c(\tau+\I\epsilon)^{-1}$, the convolution theorem yields that
\begin{align*}
    \psi(\tau)=c\int_{-\infty}^{\infty}\frac{\widehat{\rho}(\sigma)}{(\tau-\sigma)+\I\epsilon}\rd\sigma,
\end{align*}
from which we will conclude that $\psi(\tau)$ decays like $|\tau|^{-1}$.
Indeed, by the (easy part of) the Paley--Wiener theorem, $\widehat{\rho}$ is an entire function with $|\widehat{\rho}(\sigma)|\lesssim\langle \sigma\rangle^{-N}\e^{\delta_0|\im \sigma|}$. Consider the contour $\Gamma_R$ with vertices $\tau-R$, $\tau+R$, $\tau+R+2\I$ and $\tau-R+2\I$, transversed counterclockwise and with $R>|\tau|\gg 1$. The residue theorem yields
\begin{align*}
    \int_{\Gamma_R}\frac{\widehat{\rho}(\sigma)}{(\tau-\sigma)+\I\epsilon}\rd\sigma=\widehat{\rho}(\tau+\I\eps)=\mathcal{O}(\tau^{-N}).
\end{align*}
The integrals over the vertical sides are $\mathcal{O}(R^{-1})$, while the integral over the (top) horizontal side yields the leading contribution $\mathcal{O}(|\tau|^{-1})$. This shows that $|\psi(\tau)|\lesssim |\tau|^{-1}$. More generally, since
\begin{align*}
   \partial^k \psi(\tau)=c\int_{-\infty}^{\infty}\frac{\widehat{\rho}(\sigma)}{[(\tau-\sigma)+\I\epsilon]^{k+1}}\rd\sigma,
\end{align*}
the same argument (note that the residue vanishes for $k\geq 1$) yields the symbol type bounds $|\partial^k\psi(\tau)|\leq C_k\langle \tau\rangle^{-1-k}$. 
Hence, if we set 
\begin{align*}
    \eta(\tau)=(\lambda+\I\epsilon)^{-1}\sum_{\pm}\psi(\tau\pm \lambda)(\tau^2-(\lambda+\I)^2)\chi(\tau/\lambda), 
\end{align*}
we have that $|\partial^k \eta(\tau)|\leq C_k\langle \tau\rangle^{-k}$. For instance,
\begin{align*}
    |\eta(\tau)|\leq \lambda^{-1}\sum_{\pm}\langle \tau\pm\lambda\rangle\chi(\tau/\lambda)\lesssim 1
\end{align*}
since $\tau\lesssim\lambda$ on the support of $\chi(\tau/\lambda)$.
By standard multiplier theorems (e.g.\ \cite[Corollary 4.3.2]{MR3645429}), we have $\|\eta(\sqrt{-\Delta_g})\|_{L^p\to L^p}\leq C_p$ for $1<p<\infty$. Thus, by the fist part of the proof (i.e.\ the resolvent estimate for $\epsilon=1$), we have
\begin{align*}
    \|\mathfrak{S}_{\rm loc}\chi(\sqrt{-\Delta_g}/\lambda)\|_{L^{q'}\to L^q}\leq 
    \|\eta(\sqrt{-\Delta_g}/\lambda)\|_{L^{q}\to L^q}\|(\Delta_g+(\lambda+\I)^2)^{-1}\|_{L^{q'}\to L^q}\lesssim \lambda^{2\gamma(q)-1}.
\end{align*}
The global part $\mathfrak{S}-\mathfrak{S}_{\rm loc}$ can be estimated exactly as in \cite{BourgainShaoSoggeEtAl2015}. Since the argument there is only given for $q=2n/(n-2)$, $n\geq 3$, we provide some details and simplifications for the case considered here. By definition, $\mathfrak{S}-\mathfrak{S}_{\rm loc}=\lambda^{-1}m(\sqrt{-\Delta_g})$, where $m$ is the multiplier
\begin{align*}
m(\tau)=\frac{\lambda}{\I(\lambda+\I\epsilon)}\int_0^{\infty}(1-\rho(\ell))\e^{\I \ell\lambda-\ell\epsilon}\cos(\ell\tau)\rd \ell.
\end{align*}
The main observation is that, since $1-\rho(\ell)$ vanishes near the origin, there are no boundary terms and we can integrate by parts as many times as we please. Using Euler's formula for the cosine function, we deduce that 
\begin{align*}
|m(\tau)|\lesssim (1+|\tau-\lambda|)^{-N}+\epsilon^{-1}(1+\epsilon^{-1}|\tau-\lambda|)^{-N},
\end{align*}
 see \cite[Lemma 2.6]{BourgainShaoSoggeEtAl2015}.
 We now use Lemma \ref{lemma spectral multiplier bound} with $m_i(\tau)=\sqrt{m(\tau)\chi(\tau/\lambda)}$, $i=1,2$, and
 \begin{align*}
 M_j^2&\lesssim\sum_{k\lesssim\lambda}(1+|k-\lambda|)^{-N}\|\Pi_{[k,k+1]}\|^2_{L^{q'}\to L^2}
 +\epsilon^{-1}(1+\epsilon^{-1}|k-\epsilon^{-1}\lambda|)^{-N}\|\Pi_{[\epsilon k,\epsilon(k+1)]}\|^2_{L^{q'}\to L^2}\\
 &\lesssim \lambda^{2\gamma(q)}+\epsilon^{-1}\delta^2\lambda^{2\gamma(q)}\lesssim \epsilon^{-1}\delta^2\lambda^{2\gamma(q)}.
 \end{align*}
 This yields
 \begin{align*}
     \|(\mathfrak{S}-\mathfrak{S}_{\rm loc})\chi(\sqrt{-\Delta_g}/\lambda)\|_{L^{q'}\to L^q}=\lambda^{-1}\|m(\sqrt{-\Delta_g})\|_{L^{q'}\to L^q}\lesssim \epsilon^{-1}\delta^2\lambda^{2\gamma(q)-1},
 \end{align*}
which completes the proof in the case $\epsilon<1$.

\section{Stability under perturbations}
We will show that the resolvent estimates are stable under perturbations $V=V(\lambda)$ with the mapping properties
\begin{align}\label{mapping properties W}
V:W_{\lambda}^{\frac{1}{2},2}\cap W_{\lambda}^{s(q),q}\to W_{\lambda}^{-\frac{1}{2},2}+W_{\lambda}^{-s(q),q'},
\end{align}
%for sufficiently small $\epsilon\in (0,1]$. 
Here, $1-s(q)=n(1/2-1/q)$ and $W_{\lambda}^{s,p}$ are Sobolev spaces flattened at frequency $\lambda$,
\begin{align*}
\|u\|_{W_{\lambda}^{s,p}}=\|(\lambda^2-\Delta_g)^{s/2}u\|_{L^p}.
\end{align*}
We assume that we have perfect estimates for the exponent $q$ under consideration, i.e.
\begin{align}\label{z=lambda=i perfect}
\|(\Delta_g+(\lambda+\I)^2)^{-1}\|_{L^{q'}\to L^{q}}\lesssim \lambda^{2\sigma(q)-1},
\end{align}
where we recall that $\sigma(q)=n(1/2-1/q)-1/2$ for $q\geq q_n$. This makes the the estimates compatible with Sobolev embedding. Note that the spaces in \eqref{mapping properties W} are isomorphic as $\lambda$ varies, but we ask for uniform bounds in $\lambda$. More precisely, we will assume that
\begin{align}\label{def. M(lambda)}
M(\Lambda):=\sup_{\lambda\geq \Lambda}\|V(\lambda)\|_{X(\lambda)\to X'(\lambda)}<\infty
\end{align}
for some (and hence all) $\Lambda\geq 1$. Here we introduced the spaces
\begin{align*}
X(\lambda)=W_{\lambda}^{\frac{1}{2},2}\cap W_{\lambda}^{s(q),q},\quad
X'(\lambda)=W_{\lambda}^{-\frac{1}{2},2}+W_{\lambda}^{-s(q),q'}.
\end{align*}

To prove the stability we first add in the $L^2$ estimates in the $L^{q'}\to L^q$ resolvent bound.

\begin{lemma}\label{lemma add in L2 bounds}
Assume that \eqref{z=lambda=i perfect} holds. Then we have 
\begin{align*}
\|(\Delta_g+(\lambda+\I)^2)^{-1}u\|_{\lambda^{-1/2}L^2\cap \lambda^{\sigma(q)-1/2}L^{q}}\lesssim \|u\|_{\lambda^{1/2}L^2+\lambda^{-\sigma(q)+1/2}L^{q'}}.
\end{align*}
\end{lemma}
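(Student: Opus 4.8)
The plan is to reduce the stated inequality to four operator bounds for $R:=(\Delta_g+(\lambda+\I)^2)^{-1}$ and then to assemble them. Writing $\|v\|_{\lambda^{a}L^p}=\lambda^{-a}\|v\|_{L^p}$, the left-hand side of the lemma is at most $\lambda^{1/2}\|Ru\|_{L^2}+\lambda^{1/2-\sigma(q)}\|Ru\|_{L^q}$, and for any splitting $u=u_1+u_2$ with $u_1\in L^2$, $u_2\in L^{q'}$ the triangle inequality reduces matters to proving
\begin{align*}
\|R\|_{L^2\to L^2}\lesssim\lambda^{-1},\quad \|R\|_{L^{q'}\to L^2}\lesssim\lambda^{\sigma(q)-1},\quad \|R\|_{L^2\to L^q}\lesssim\lambda^{\sigma(q)-1},\quad \|R\|_{L^{q'}\to L^q}\lesssim\lambda^{2\sigma(q)-1};
\end{align*}
indeed, these give $\lambda^{1/2}\|Ru\|_{L^2}+\lambda^{1/2-\sigma(q)}\|Ru\|_{L^q}\lesssim\lambda^{-1/2}\|u_1\|_{L^2}+\lambda^{\sigma(q)-1/2}\|u_2\|_{L^{q'}}$, and minimizing over all such splittings produces the right-hand side $\|u\|_{\lambda^{1/2}L^2+\lambda^{-\sigma(q)+1/2}L^{q'}}$. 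The last of the four bounds is the hypothesis \eqref{z=lambda=i perfect}, and the first is immediate from the spectral theorem, since $\im(\lambda+\I)^2=2\lambda$ forces $\|R\|_{L^2\to L^2}\le(2\lambda)^{-1}$.

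The work is therefore to derive the two mixed bounds from \eqref{z=lambda=i perfect}, and for this I would combine a $TT^*$ argument with the functional calculus. Because $R$ and $R^*=(\Delta_g+(\lambda-\I)^2)^{-1}$ commute (both being functions of $\Delta_g$), the scalar identity $|z|^{-2}=-(2\lambda)^{-1}\im(z^{-1})$ -- valid for $z=(\lambda+\I)^2-\mu^2$ since $\im z=2\lambda$ for all $\mu\ge 0$ -- lifts to the operator identity $R^*R=RR^*=-(2\lambda)^{-1}\im R$. Hence, by Cauchy--Schwarz and Hölder,
\begin{align*}
\|R\|_{L^{q'}\to L^2}^2\le\|R^*R\|_{L^{q'}\to L^q}=\frac{1}{2\lambda}\|\im R\|_{L^{q'}\to L^q}\le\frac{1}{2\lambda}\|R\|_{L^{q'}\to L^q}\lesssim\lambda^{2\sigma(q)-2},
\end{align*}
so that $\|R\|_{L^{q'}\to L^2}\lesssim\lambda^{\sigma(q)-1}$; the identical computation with $RR^*$ in place of $R^*R$ gives $\|R\|_{L^2\to L^q}\lesssim\lambda^{\sigma(q)-1}$. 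In the penultimate step I used $\|\im R\|_{L^{q'}\to L^q}\le\|R\|_{L^{q'}\to L^q}$, which follows from writing $\im R=(2\I)^{-1}(R-R^*)$, the triangle inequality for operator norms, and the fact that $R^*$ satisfies \eqref{z=lambda=i perfect} just as $R$ does (by complex conjugation, since $\Delta_g$ is real).

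I do not anticipate a genuine obstacle. The one point worth isolating is that the identity $R^*R=-(2\lambda)^{-1}\im R$ holds for the \emph{full} resolvent, not merely a frequency-truncated piece, which is precisely what allows the passage from the $L^{q'}\to L^q$ bound to the mixed $L^{q'}\to L^2$ and $L^2\to L^q$ bounds with no separate treatment of high frequencies. Alternatively one could split $R=\Pi_{\le 2\lambda}R+\Pi_{>2\lambda}R$, deduce the perfect spectral cluster estimate from \eqref{z=lambda=i perfect} through \eqref{Lq'Lq resolvent implies L2Lq spectral cluster} and feed it into \eqref{Lq'L2 spectral cluster implies L2Lq resolvent} for the first summand, disposing of the second by elliptic estimates and Sobolev embedding with room to spare; but the $TT^*$ argument is shorter and avoids the elliptic discussion. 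This step is exactly the promised addition of the $L^2$ estimates: $-\im R$ is a nonnegative operator, dominated in $L^{q'}\to L^q$ norm by $R$ and, via the spectral theorem, bounded below by $(2\lambda)^{-1}$ times the unit-window spectral projector, so a single quantity controls both the $L^2\to L^2$ bound and the mixed spectral-cluster bounds.
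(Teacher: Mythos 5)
Your proof is correct, and it takes a cleaner, more self-contained route than the paper's. The paper cites its own abstract machinery: it appeals to Proposition~\ref{proposition abstract bounds} (specifically \eqref{Lq'Lq resolvent implies L2Lq spectral cluster}) to pass from the $L^{q'}\to L^q$ resolvent bound to the $L^{q'}\to L^2$ spectral cluster bound, and then to Corollary~\ref{corollary equivalence cluster and quasimode} to convert that back into the $L^2\to L^q$ resolvent bound, with the $L^q\to L^2$ bound by duality and $L^2\to L^2$ by the spectral theorem. Both approaches hinge on exactly the same algebraic observation — the imaginary part of the resolvent controls a squared norm — but you apply it directly to the full resolvent via the exact identity $R^*R=RR^*=-(2\lambda)^{-1}\im R$ (available here because $\im\bigl((\lambda+\I)^2-\mu^2\bigr)=2\lambda$ is constant over the spectrum) together with the $TT^*$ inequality $\|R\|_{L^{q'}\to L^2}^2\le\|R^*R\|_{L^{q'}\to L^q}$. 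As you correctly flag, this sidesteps the $\Pi_{[0,2\lambda]}$ truncation that appears in the abstract lemmas, so no separate elliptic-estimate discussion is needed to reattach high frequencies; it also makes the bound $\|R\|_{L^2\to L^2}\le(2\lambda)^{-1}$ and the mixed-norm bounds fall out of a single quantity, precisely the ``addition of the $L^2$ estimates'' promised in the lemma. The alternative route you sketch at the end is essentially the paper's.
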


\begin{proof}
The $L^{q'}\to L^q$ bound holds by assumption. The $L^2\to L^q$ bound is equivalent to the spectral cluster bound (implication $(a)\iff (b)$ in Corollary \ref{corollary equivalence cluster and quasimode}) and hence follows from the $L^{q'}\to L^q$ bound (Proposition \ref{proposition abstract bounds}). The $L^{q}\to L^2$ bound then follows by duality. The $L^2\to L^2$ bound follows from the spectral theorem.
\end{proof}

Next we add in the elliptic $W_{\lambda}^{-1,2}\to W_{\lambda}^{1,2}$ estimates, similar to \cite[Theorem 7]{MR2252331}, to obtain an even stronger version. 

\begin{lemma}\label{lemma add in elliptic bounds}
Assume that \eqref{z=lambda=i perfect} holds. Then there exists $C_0>0$ such that
\begin{align}\label{def. C0}
\|(\Delta_g+(\lambda+\I)^2)^{-1}\|_{X'(\lambda)\to X(\lambda)}\leq C_0.
\end{align}
\end{lemma}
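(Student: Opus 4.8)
The plan is to combine the $L^{q'}\to L^q$ resolvent bound \eqref{z=lambda=i perfect}, the $L^2$-improvements from Lemma \ref{lemma add in L2 bounds}, and the elliptic $W_\lambda^{-1,2}\to W_\lambda^{1,2}$ bound for the resolvent, all of which hold with constants uniform in $\lambda\ge 1$, and to interpolate/combine them to control the full norm $X'(\lambda)\to X(\lambda)$. Writing $R(\lambda)=(\Delta_g+(\lambda+\I)^2)^{-1}$, recall that $X(\lambda)=W_\lambda^{1/2,2}\cap W_\lambda^{s(q),q}$ and $X'(\lambda)=W_\lambda^{-1/2,2}+W_\lambda^{-s(q),q'}$, where $1-s(q)=n(1/2-1/q)$, so that $s(q)-1/2=-\sigma(q)$ (using $\sigma(q)=n(1/2-1/q)-1/2$ for $q\ge q_n$, the relevant range since all estimates have perfect exponents only for $q\ge q_n$ and the case $q<q_n$ follows by Sobolev embedding). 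Under this dictionary, the statement of Lemma \ref{lemma add in L2 bounds} is exactly that $R(\lambda)\colon \lambda^{1/2}L^2+\lambda^{-\sigma(q)+1/2}L^{q'}\to \lambda^{-1/2}L^2\cap\lambda^{\sigma(q)-1/2}L^q$ with uniform constant; i.e. $R(\lambda)$ maps $X'(\lambda)$ to $X(\lambda)$ at the level of $L^p$ norms without the flattened derivatives.

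First I would reduce the statement to a pair of ``flattened-Sobolev'' resolvent bounds, namely
\begin{align}\label{planA}
\|R(\lambda)\|_{W_\lambda^{-1/2,2}\to W_\lambda^{1/2,2}}\lesssim 1,\qquad
\|R(\lambda)\|_{W_\lambda^{-s(q),q'}\to W_\lambda^{s(q),q}}\lesssim 1,
\end{align}
together with the two cross terms $W_\lambda^{-1/2,2}\to W_\lambda^{s(q),q}$ and $W_\lambda^{-s(q),q'}\to W_\lambda^{1/2,2}$; by the definition of the sum and intersection norms, \eqref{def. C0} is equivalent to the simultaneous validity of all four. The diagonal $L^2$ bound is pure spectral calculus: $(\lambda^2-\Delta_g)^{1/4}R(\lambda)(\lambda^2-\Delta_g)^{1/4}$ has symbol $(\lambda^2+\tau^2)^{1/2}/(\tau^2-(\lambda+\I)^2)$ in $\tau=\sqrt{-\Delta_g}$, which is bounded uniformly in $\lambda$ since the denominator has modulus $\gtrsim\lambda$ on $\tau\le 2\lambda$ and the quotient is $O(1)$ for $\tau\gtrsim\lambda$; hence the first bound in \eqref{planA}. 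For the $L^q$ diagonal bound I would split $u=\Pi_{\le 2\lambda}u+\Pi_{>2\lambda}u$: on the high-frequency piece $R(\lambda)$ is elliptic and $(\lambda^2-\Delta_g)^{s(q)/2}R(\lambda)(\lambda^2-\Delta_g)^{s(q)/2}$ is an $L^q$-bounded spectral multiplier (a bounded Hörmander--Mikhlin multiplier on the manifold, uniformly in $\lambda$, since its symbol and all derivatives decay appropriately for $\tau\gtrsim\lambda$); on the low-frequency piece one absorbs the factors $(\lambda^2-\Delta_g)^{\pm s(q)/2}\sim\lambda^{\pm s(q)}$ up to bounded multipliers and reduces to the $L^{q'}\to L^q$ bound \eqref{z=lambda=i perfect} after checking that $\lambda^{s(q)}\cdot\lambda^{2\sigma(q)-1}\cdot\lambda^{s(q)}\lesssim 1$, which holds because $s(q)=1/2-\sigma(q)$ gives $2s(q)+2\sigma(q)-1=0$. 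The two cross terms follow from Lemma \ref{lemma add in L2 bounds} combined with the observation that on frequencies $\lesssim\lambda$ the flattening operators are comparable to powers of $\lambda$, and on frequencies $\gtrsim\lambda$ ellipticity gives even better decay, so $W_\lambda^{-1/2,2}\hookrightarrow \lambda^{1/2}L^2$ composed with $R(\lambda)\colon\lambda^{1/2}L^2\to\lambda^{\sigma(q)-1/2}L^q=\lambda^{-s(q)}L^q$ composed with $(\lambda^2-\Delta_g)^{s(q)/2}\colon L^q\to L^q$ (after low-frequency cutoff) composes to give boundedness $W_\lambda^{-1/2,2}\to W_\lambda^{s(q),q}$; the other cross term is dual.

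The main obstacle I anticipate is the bookkeeping around the high-frequency (elliptic) regime: making precise that $(\lambda^2-\Delta_g)^{\pm\theta/2}$-flattened versions of the elliptic part of $R(\lambda)$ are uniformly bounded on $L^q(M)$ for $1<q<\infty$. This is the analogue, at the operator level, of the Calder\'on--Zygmund argument already invoked in the proof of Theorem \ref{theorem spectral cluster to resolvent stronger version} to upgrade $L^2$ boundedness of $S_{\lambda,\lambda}$-quantizations to $L^p$ boundedness; here one needs it for the functions of $-\Delta_g$ appearing as spectral multipliers, which on a compact manifold follows from a Mikhlin--H\"ormander multiplier theorem for $\sqrt{-\Delta_g}$ together with scaling in $\lambda$, but stating it cleanly requires either quoting such a multiplier theorem or redoing the kernel estimates after the rescaling $(x,\xi)\mapsto(\lambda x,\xi/\lambda)$. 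I would handle this by reducing, exactly as in the earlier proof, to an $S_{0,0}$-type kernel estimate $|K(x,y)|+|\nabla_{x,y}K(x,y)|\lesssim 1$ for the rescaled operator restricted to unit frequencies, so that Calder\'on--Zygmund theory applies and the constant is uniform in $\lambda$. Once that uniform $L^q$ elliptic bound is in hand, the remaining steps are the routine frequency decomposition and power-counting in $\lambda$ sketched above, using only \eqref{z=lambda=i perfect}, Lemma \ref{lemma add in L2 bounds}, Proposition \ref{proposition abstract bounds}, and the identity $s(q)+\sigma(q)=1/2$.
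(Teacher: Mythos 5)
Your proposal follows the same high-level strategy as the paper: enumerate the four component bounds of $X'(\lambda)\to X(\lambda)$, split into low and high frequency, use Lemma~\ref{lemma add in L2 bounds} on the low-frequency part (where the flattening operators reduce to powers of $\lambda$ and the power-counting $2s(q)+2\sigma(q)-1=0$ closes the estimate), and use ellipticity on the high-frequency part. The diagonal $L^2$ computation is correct and is the spectral-theorem content of the paper's elliptic estimate.

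Where you diverge from the paper — and where there is a gap — is the high-frequency half of the diagonal $L^q$ bound. You claim that $(\lambda^2-\Delta_g)^{s(q)/2}R(\lambda)(\lambda^2-\Delta_g)^{s(q)/2}$, restricted to frequencies $\gtrsim\lambda$, is ``an $L^q$-bounded spectral multiplier'' via a Mikhlin--H\"ormander theorem. But for the diagonal bound $W_\lambda^{-s(q),q'}\to W_\lambda^{s(q),q}$ what is needed is an $L^{q'}\to L^q$ bound, which is a genuine smoothing estimate, not an $L^q\to L^q$ multiplier bound. The high-frequency symbol decays like $\tau^{2s(q)-2}=\tau^{-(1+2\sigma(q))}$, and $1+2\sigma(q)=n(1/q'-1/q)$ is exactly the critical Sobolev order; so the required $L^{q'}\to L^q$ bound is precisely a Sobolev embedding, and Mikhlin alone does not deliver it. The paper sidesteps this entirely by routing the high-frequency regime through $L^2$: one applies the embeddings $W_\lambda^{1,2}\subset W_\lambda^{s(q),q}$ and $W_\lambda^{-s(q),q'}\subset W_\lambda^{-1,2}$ (the critical flattened Sobolev embeddings, uniform in $\lambda$) and then the elliptic $L^2$ bound $\|R(\lambda)\Pi_{>2\lambda}\|_{W_\lambda^{-1,2}\to W_\lambda^{1,2}}\lesssim 1$, which is just the spectral theorem. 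This purely $L^2$ detour is shorter and, crucially for the Lipschitz/$C^{1,1}$ setting of the paper, avoids invoking an $L^p$ spectral-multiplier theorem for $\sqrt{-\Delta_g}$ on a rough compact manifold — the very technical point you flag as the ``main obstacle'' but do not actually need if you take the Sobolev-embedding route.

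A secondary imprecision, which the paper shares: the elliptic $W_\lambda^{-1,2}\to W_\lambda^{1,2}$ resolvent bound is \emph{not} uniform in $\lambda$ for general data (the symbol $(\lambda^2+\tau^2)/|(\lambda+\mathrm{i})^2-\tau^2|$ is of size $\lambda$ near $\tau=\lambda$); it holds uniformly only after the high-frequency cut-off $\Pi_{>2\lambda}$, which is exactly where both you and the paper use it, so the argument is fine, but the blanket statement ``all of which hold with constants uniform in $\lambda\ge1$'' should be qualified accordingly.
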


\begin{proof}
This follows from the previous lemma together with the Sobolev embeddings
\begin{align}\label{Sobolev embeddings}
W_{\lambda}^{1,2}\subset W_{\lambda}^{s(q),q},\quad W_{\lambda}^{-s(q),q'}\subset W_{\lambda}^{-1,2},
\end{align}
and the observation that for functions localized to frequency $\lesssim\lambda$, we have
\begin{align*}
\|u\|_{W_{\lambda}^{s(q),q}}\lesssim \|u\|_{\lambda^{-s(q)}L^{q}},\quad \|u\|_{\lambda^{s(q)}L^{q'}}\lesssim \|u\|_{W_{\lambda}^{-s(q),q'}}.
\end{align*}
Note again that $s(q)=-\sigma(q)+1/2$.
\end{proof}

\begin{remark}
The $L^2$ based bounds allow one to handle gradient perturbations or perturbations of the metric, similarly to \cite{MR2252331}, \cite{MR2219246}, \cite{MR3615545}. 
\end{remark}

We will assume that for some $c\in(0,1)$,
\begin{align}\label{def. Lambda(eps)}
\Lambda_0=\inf\{\Lambda\geq 1:\,M(\Lambda)\leq c\, C_0^{-1}\}<\infty.
\end{align}

\begin{proposition}\label{proposition stability under perturbations effective}
Assume that \eqref{def. C0} holds and that $V=V(\lambda)$ is a family of perturbations satisfying \eqref{def. M(lambda)}, \eqref{def. Lambda(eps)}. Then
\begin{align}\label{effective bound V(lambda)}
\|(\Delta_g+V+(\lambda+\I)^2)^{-1}\|_{X'(\lambda)\to X(\lambda)}\leq C_0(1-c)^{-1}
\end{align}
for all $\lambda\geq \Lambda_0$. 
\end{proposition}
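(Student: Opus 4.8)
The plan is a routine Neumann-series (second resolvent identity) argument; the only care needed is the identification of the formal inverse with the genuine resolvent. Write $R_0(\lambda):=(\Delta_g+(\lambda+\I)^2)^{-1}$. By Lemma~\ref{lemma add in elliptic bounds}, i.e.\ \eqref{def. C0}, $R_0(\lambda)$ extends to a bounded operator $X'(\lambda)\to X(\lambda)$ with $\|R_0(\lambda)\|_{X'(\lambda)\to X(\lambda)}\leq C_0$, uniformly in $\lambda$. By hypothesis \eqref{def. M(lambda)}, $V(\lambda):X(\lambda)\to X'(\lambda)$ is bounded. Hence $V(\lambda)R_0(\lambda)$ is a bounded operator on $X'(\lambda)$ and $R_0(\lambda)V(\lambda)$ a bounded operator on $X(\lambda)$, both with norm at most $C_0\,\|V(\lambda)\|_{X(\lambda)\to X'(\lambda)}$.

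First I would record the smallness. Since $M$ is nonincreasing and $\Lambda_0=\inf\{\Lambda\geq 1:M(\Lambda)\leq cC_0^{-1}\}$ is finite by \eqref{def. Lambda(eps)}, for every $\lambda\geq\Lambda_0$ one can pick $\Lambda$ in that set with $\Lambda\leq\lambda$ and obtain $\|V(\lambda)\|_{X(\lambda)\to X'(\lambda)}\leq M(\Lambda)\leq cC_0^{-1}$. Consequently $\|V(\lambda)R_0(\lambda)\|_{X'(\lambda)\to X'(\lambda)}\leq c<1$, so $I+V(\lambda)R_0(\lambda)$ is invertible on $X'(\lambda)$ with inverse given by the convergent Neumann series $\sum_{k\geq 0}(-V(\lambda)R_0(\lambda))^k$ and $\|(I+V(\lambda)R_0(\lambda))^{-1}\|_{X'(\lambda)\to X'(\lambda)}\leq (1-c)^{-1}$. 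Symmetrically, $I+R_0(\lambda)V(\lambda)$ is invertible on $X(\lambda)$ with the analogous bound.

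Then I would define the candidate $B(\lambda):=R_0(\lambda)(I+V(\lambda)R_0(\lambda))^{-1}=(I+R_0(\lambda)V(\lambda))^{-1}R_0(\lambda)$ (the two expressions agree by the resolvent identity) as a bounded operator $X'(\lambda)\to X(\lambda)$, so that $\|B(\lambda)\|_{X'(\lambda)\to X(\lambda)}\leq C_0(1-c)^{-1}$, which is \eqref{effective bound V(lambda)}. To see that $B(\lambda)$ really is $(\Delta_g+V(\lambda)+(\lambda+\I)^2)^{-1}$, one uses the algebraic factorizations $\Delta_g+V(\lambda)+(\lambda+\I)^2=(I+V(\lambda)R_0(\lambda))(\Delta_g+(\lambda+\I)^2)=(\Delta_g+(\lambda+\I)^2)(I+R_0(\lambda)V(\lambda))$, valid on a dense subspace (e.g.\ the form domain, where $\Delta_g+V$ is defined by quadratic form methods as indicated in the remarks to Theorem~\ref{theorem spectral cluster to resolvent intro}); composing with $B(\lambda)$ on the appropriate side exhibits $B(\lambda)$ as a two-sided inverse there, and consistency with the $L^2$ resolvent extends the bound to all of $X'(\lambda)$ by density.

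The step I expect to be the only genuine wrinkle is this last one: setting up the perturbed operator $\Delta_g+V$ carefully (form-boundedness of $V$, self-adjointness, and agreement of the resolvent across the $L^p$ scale in the "consistency" sense used in Section~\ref{Section Abstract results}), so that the Neumann-series manipulations are not merely formal. Everything else — the operator bounds and the convergence of the series — is immediate from \eqref{def. C0}, \eqref{def. M(lambda)} and \eqref{def. Lambda(eps)}.
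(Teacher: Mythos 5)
Your proposal is correct and follows the same route as the paper: the paper's proof is the one-line statement that the result is ``an immediate consequence of Lemma~\ref{lemma add in elliptic bounds}, the second resolvent formula \ldots and a Neumann series argument,'' and you have supplied exactly those details. You also rightly flag the only non-routine point — justifying that the Neumann sum is the genuine resolvent of the quadratic-form realization of $\Delta_g+V$, not merely a formal inverse — which the paper leaves implicit.
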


\begin{proof}
This is an immediate consequence of Lemma \ref{lemma add in elliptic bounds}, the second resolvent formula
\begin{align*}
(\Delta_g+V+(\lambda+\I)^2)^{-1}=(\Delta_g+(\lambda+\I)^2)^{-1}(\mathbf{1}+V(\Delta_g+(\lambda+\I)^2)^{-1})
\end{align*}
and a Neumann series argument.
\end{proof}

The estimate \eqref{effective bound V(lambda)} is effective in the sense that it is valid for a whole family of perturbations satisfying appropriate bounds. We are now turning our attention to non-effective bounds, i.e.\ we fix a perturbation $V$ and allow the constants in the estimates depend on $V$ itself. We assume that \eqref{mapping properties W} holds and take norms for fixed $\lambda$, say $\lambda=1$. For simplicity we only consider zero order perturbations (potentials). That is, we assume that $V\in L^p(M)$ with $1/p=1/q'-1/q$. Since $M$ is compact, we have the inclusion $L^{p_2}(M)\subset L^{p_1}(M)$ for $p_2\geq p_1$. 
%The weakest assumption is therefore $V\in L^{n/2}(M)$ (for $n\geq 3$), corresponding to $q=2^*$.

\begin{proof}[Proof of Theorem \ref{theorem spectral cluster to resolvent intro}]
If $n=2$ or $n=3$ and $q<2^*$, then by Lemma \ref{lemma add in L2 bounds} we have 
\begin{align*}
M(\Lambda)\leq \Lambda^{2\sigma(q)-1}\|V\|_{L^p}.
\end{align*}
If we fix $c=1/2$ in the definition of $\Lambda_0$, as we do now, this implies that
\begin{align*}
\Lambda_0\leq (2C_0\|V\|_{L^p})^{\frac{1}{1-2\sigma(q)}}.
\end{align*}
Note that $1-2\sigma(q)>0$ here. The claim then follows from Proposition \ref{proposition stability under perturbations effective}.

Now consider the case $n\geq 3$ and $q=2^*$, so that $2\sigma(q)-1=0$. For $\alpha_0>0$ we set
\begin{align*}
V_{>\alpha_0}=V\mathbf{1}_{\{x\in M:\, |V(x)|>\alpha_0\}},\quad V_{\leq\alpha_0}=V-V_{>\alpha_0}.
\end{align*}
Since $V\in L^{n/2}$ we may choose $\alpha_0$ so large that $\|V_{>\alpha_0}\|_{L^{n/2}}\leq C_0^{-1}/4$. Since $\|V_{\leq\alpha_0}\|_{L^{\infty}}\leq \alpha_0$ we have
\begin{align*}
\|V\|_{X(\lambda)\to X'(\lambda)}\leq \|V_{>\alpha_0}\|_{L^{q'}\to L^q}+\lambda^{-1}\|V_{\leq\alpha_0}\|_{L^{2}\to L^2}\leq \tfrac{1}{4} C_0^{-1}+\lambda^{-1}\alpha_0,
\end{align*}
which yields
\begin{align*}
M(\Lambda)\leq \tfrac{1}{4} C_0^{-1}+\Lambda^{-1}\alpha_0,\quad \Lambda_0\leq 4\alpha_{0} C_0.
\end{align*}
Thus, the claim again follows from Proposition \ref{proposition stability under perturbations effective}.
\end{proof}

\section{Applications}\label{Section Applications}

\subsection{From Tomas--Stein to Kenig--Ruiz--Sogge}\label{subsection From Tomas--Stein to Kenig--Ruiz--Sogge}
In the remainder of this section we will consider applications of Theorem \ref{theorem spectral cluster to resolvent intro}. Ultimately, we will obtain new uniform resolvent estimates for compact manifolds with boundary or nonsmooth metrics. As a warm-up, we start by re-proving several known resolvent estimates, using the corresponding spectral cluster bound.

The prototype of a spectral cluster bound is the Tomas--Stein inequality, which can be stated as a spectral cluster bound for the Euclidean Laplacian $-\Delta$ (see \cite[Chapter 5]{MR3645429}),
\begin{align}\label{Tomas--Stein spectral cluster form}
\|\Pi_{[\lambda,\lambda+\epsilon]}\|_{L^2(\R^n)\to L^{q_n}(\R^n)}\lesssim \epsilon^{1/2}\lambda^{1/q_n}
\end{align}
for $\lambda,\epsilon>0$. This also follows from the unit size spectral cluster bounds
\begin{align}\label{Tomas--Stein spectral cluster form unit size}
\|\Pi_{[\lambda,\lambda+1]}\|_{L^2(\R^n)\to L^{q_n}(\R^n)}\lesssim \lambda^{1/q_n}
\end{align}
by scaling. We will show that \eqref{Tomas--Stein spectral cluster form unit size} implies the resolvent bound
\begin{align}\label{KRS}
\|(\Delta+1+\I 0)^{-1}\|_{L^{q_n'}(\R^n)\to L^{q_n}(\R^n)}\lesssim 
1.
%\lambda^{-\frac{2}{n+1}},
\end{align}
%for $\lambda\geq 1$. 
This is the Kenig--Ruiz--Sogge bound in the special case of self-dual exponents (\cite[Theorem 2.3]{MR894584}) and with spectral parameter $|z|=1$. The general case can again be obtained by scaling. Moreover, all other bounds $2\leq q\leq 2^*$ follow from \eqref{KRS} by Sobolev embedding.
% By Littlewood--Paley theory\footnote{It is not strictly necessary to invoke Littlewood--Paley theory. We could just use the two regions $|\xi|\leq 2\lambda$ and its complement, as before.} and Minkowski's inequality for integrals this would be a consequence of the same estimate for $P_k u$, where $P_k$ is the Littlewood--Paley projection onto frequencies $2^{k-1}\leq |\xi|\leq 2^{k+1}$. In the elliptic region $2^k\ll \lambda$ or $2^k\gg\lambda$, Bernstein inequalities (or Sobolev embedding) and the Mikhlin multiplier theorem yield the bound
% \begin{align*}
% \|(\Delta+(\lambda+\I 0)^2)^{-1}P_k u\|_{L^{q_n}(\R^n)}\lesssim \frac{2^{k\frac{2n}{n+1}}}{(2^k+\lambda)^2}\|P_k u\|_{L^{q_n'}(\R^n)}.
% \end{align*}
% The factor on the right is dominated by $\lambda^{-\frac{2}{n+1}}$ in the region considered. It remains to treat the case $2^k\approx \lambda$. 
The Tomas--Stein estimate \eqref{Tomas--Stein spectral cluster form unit size} is of the form of the abstract spectral cluster bound \eqref{spectral cluster estimate strong theorem} with $\delta=\epsilon=1$ and $\gamma(q_n)=\sigma(q_n)=1/q_n$. Note that the exponent in \eqref{KRS} is $-2/(n+1)=2 /q_n-1$. Therefore, Theorem \ref{theorem spectral cluster to resolvent stronger version} implies
\begin{align}
\|(\Delta+(\lambda+\I)^2)^{-1}\|_{L^{q_n'}(\R^n)\to L^{q_n}(\R^n)}\lesssim \lambda^{-\frac{2}{n+1}}.
\end{align}
Rescaling $x\to\lambda^{-1} x$ then yields
\begin{align}
\|(\Delta+(1+\I\lambda^{-1})^2)^{-1}\|_{L^{q_n'}(\R^n)\to L^{q_n}(\R^n)}\lesssim 1.
\end{align}
Taking the limit $\lambda\to\infty$ proves \eqref{KRS}.

\subsection{Consequences of Sogge's bounds}
As explained in the introduction, combining Theorem \eqref{theorem spectral cluster to resolvent intro} with Sogge's bounds \eqref{Sogge}, we recover the result of Frank--Schimmer~ \cite{MR3620715} and Burq--Dos Santos Ferreira--Krupchyk~\cite{MR3848231}, and thus also that of Dos Santos Ferreira--Kenig--Salo \cite{MR3200351}. 

\subsection{Nonsmooth metrics}\label{Section nonsmooth}
As already discussed in Section 2, part (i) of Theorem \ref{theorem nonsmooth into} can be deduced from several references \cite{MR2262171,MR2094851,MR3900030,Smith resolvent}, either with or without the help of Theorem~\ref{theorem spectral cluster to resolvent intro}. The easiest route is to combine Smith's result \cite{MR2262171} with Theorem \ref{theorem spectral cluster to resolvent intro}. Alternatively, one can apply \cite[Theorem 2.5]{MR2094851} to the Weyl quantization of the symbol
$p(x,\xi)=\lambda^{-1}(\lambda^2-g_{\lambda}^{ij}(x)\xi_i\xi_j)$, localized to frequency $\lambda$. This is in $\lambda S_{\lambda,\lambda^{1/2}}$ after suitable global extension and satisfies (A2), (A3) in \cite{MR2094851} with $k=0$. 
This yields \eqref{ineq. first order equation (revision)} with $T=1$ and $\gamma(q)=\sigma(q)$. The remainder of the proof is identical to that of Theorem \ref{theorem spectral cluster to resolvent intro} (or rather Theorem \ref{theorem spectral cluster to resolvent stronger version}). The use of the Weyl quantization is immaterial since the error compared to $\Delta_g+\lambda^2$ can be absorbed into the forcing term $G$ in \eqref{ineq. first order equation (revision)}.

Part (ii) follows from \cite[Theorem 1.1]{MR3282983} and Theorem \ref{theorem spectral cluster to resolvent intro}.
At the endpoint $q=8$, the result could also be deduced (with the same logarithmic loss as for the spectral cluster bound in \cite{MR3282983}) without using Theorem \ref{theorem spectral cluster to resolvent intro} directly, but instead applying the Christ--Kiselev lemma to the last formula in Step 1 in the proof of \cite[Theorem 2.1]{MR3282983}.

Part (iii) of Theorem \ref{theorem nonsmooth into} is a consequence of Theorem~\ref{theorem spectral cluster to resolvent stronger version} (with $\eps=\lambda^{\rho}$, $\delta=\lambda^{\rho/2}$, $\gamma(q)=\sigma(q)$) and the following lemma.

\begin{lemma}\label{lemma large spectral windows Cs}
If $1\leq s\leq 2$ and $\rho=(2-s)/(2+s)$, then the spectral cluster estimates
\begin{align}\label{no loss estimate for long spectral windows}
\|\Pi_{[\lambda,\lambda+\lambda^{\rho}]}\|_{L^2\to L^q}\lesssim \lambda^{\sigma(q)+\rho/2}
\end{align}
hold for all $2\leq q\leq \infty$.
\end{lemma}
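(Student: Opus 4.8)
\textbf{Proof proposal for Lemma \ref{lemma large spectral windows Cs}.}

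The plan is to obtain the spectral cluster estimate \eqref{no loss estimate for long spectral windows} for the long window $[\lambda,\lambda+\lambda^{\rho}]$ from sharp spectral cluster (equivalently, quasimode) estimates for $C^s$ metrics on the appropriate small spatial scale, combined with an orthogonality (almost-orthogonality) argument to sum up shorter windows. The key input is Smith's work \cite{MR2280790} on $C^s$ metrics with $1\leq s\leq 2$: after a paradifferential reduction and rescaling to the $\lambda^{-\rho}$ spatial scale (on which the metric is well approximated by a $C^{1,1}$ metric, by \cite[Corollary 6]{MR2280790}), one has the sharp localized quasimode bound, e.g.\ \cite[Corollary 7]{MR2280790}. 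On the unit scale this amounts to a bound of the form \eqref{abstract quasimode bound} with $\gamma(q)=\sigma(q)$, $\epsilon=\lambda^{\rho}$ and $\delta=\lambda^{\rho/2}$; equivalently, by Corollary \ref{corollary equivalence cluster and quasimode}, one has the spectral cluster estimate $\|\Pi_{[\lambda,\lambda+\lambda^{\rho}]}\|_{L^2\to L^q}\lesssim \lambda^{\sigma(q)+\rho/2}$ directly.

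Concretely, I would first recall the choice $\rho=(2-s)/(2+s)$: this is precisely the scale on which the regularization error of a $C^s$ metric is of acceptable size, namely $\|g-g_{\lambda}\|_{C^2}\lesssim \lambda^{2(1-s)/\ldots}$ balances against the frequency, so that on balls of radius $R=\lambda^{-\rho}$ the wave packet parametrix for the regularized operator is valid. Then, for a single ball $B_R$ in the cover of $M$, the sharp parametrix gives the scale-$R$ quasimode estimate \eqref{quasimode of order 1+rho} (this is \cite[Corollary 7]{MR2280790}, also appearing as the last estimate before (15) in \cite{MR2289621}). Rescaling $B_R$ to the unit ball turns the frequency $\lambda$ into $\mu=\lambda^{1-\rho}=\lambda^{s/(s+1)}$ (consistent with $\rho=(2-s)/(2+s)$), and turns the $C^s$ metric into one whose rescaled coefficients are uniformly $C^{1,1}$; hence the estimate on $B_R$ holds with no loss relative to the smooth case. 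Summing over a bounded-overlap cover of $M$ by such balls and using the triangle inequality in $L^q$ — which is lossless up to a constant because of the bounded overlap — yields the unit-scale quasimode estimate \eqref{abstract quasimode bound} with the stated parameters, and then Corollary \ref{corollary equivalence cluster and quasimode} converts this to \eqref{no loss estimate for long spectral windows}.

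The main obstacle is the passage from the single-ball (scale-$R$) estimate to the global (unit-scale) estimate: on a Lipschitz or $C^s$ manifold one cannot simply patch microlocal parametrix estimates, since energy can propagate between balls in a way that the naive sum does not control. The point of restricting to windows of length $\lambda^{\rho}$ rather than length $1$ is exactly that on the time interval dual to such a window — of length $\lambda^{-\rho}=R$ — there is no genuine propagation between distinct balls in the cover, so finite propagation speed confines each localized quasimode to (a fixed dilate of) its own ball, and the bounded-overlap sum is legitimate. I would therefore make this propagation-confinement step explicit: multiply $u$ by a partition of unity $\{\phi_j\}$ subordinate to the cover, note that $(\Delta_g+\lambda^2)(\phi_j u)=\phi_j(\Delta_g+\lambda^2)u+[\Delta_g,\phi_j]u$ with the commutator term contributing $O(\lambda)\|u\|_{L^2(B_{R_j}^*)}$ (absorbable, after the scaling, into the forcing term as in \eqref{quasimode of order 1+rho}), apply \eqref{quasimode of order 1+rho} to each $\phi_j u$, and sum. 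Once this is in place the rest is bookkeeping of exponents, which the choice of $\rho$ is designed to make work out, and the elliptic/Sobolev reductions for the noncharacteristic region and for $q=\infty$ are standard (and in any case the range $2\leq q\leq 2^*$ suffices for the applications).
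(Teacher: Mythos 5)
Your proposal follows the paper's approach: the paper's proof is exactly to take Smith's localized quasimode bound \eqref{quasimode of order 1+rho} on the $\lambda^{-\rho}$ scale, sum over a bounded-overlap cover of $M$ (using $\ell^2\subset\ell^q$ for $q\geq 2$), and then apply the equivalence of quasimode and spectral cluster estimates (Corollary \ref{corollary equivalence cluster and quasimode}) with $\delta=\lambda^{\rho/2}$, $\epsilon=\lambda^{\rho}$, $\gamma(q)=\sigma(q)$; the reduction to $q=q_n$ is handled by interpolation with Smith's $L^2\to L^\infty$ bound.

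A few small bookkeeping points in your write-up are imprecise, though none is fatal. First, after rescaling $B_R$ to the unit ball, the new frequency is $\mu=\lambda R=\lambda^{1-\rho}=\lambda^{2s/(2+s)}$, not $\lambda^{s/(s+1)}$. Second, if you localize via cutoffs $\phi_j$ at scale $R$, the commutator $[\Delta_g,\phi_j]u$ is of size $O(R^{-1}\lambda)\|u\|_{L^2}=O(\lambda^{1+\rho})\|u\|_{L^2}$, not $O(\lambda)\|u\|_{L^2}$; it is still absorbable, precisely because the forcing term in \eqref{quasimode of order 1+rho} carries the weight $R\lambda^{-1}$, and the choice of $\rho$ is calibrated so that $R\lambda^{-1}\cdot R^{-1}\lambda=1$. (In fact, since \eqref{quasimode of order 1+rho} already has $L^q(B_R)$ on the left and $L^2(B_R^*)$ on the right, you can skip the partition of unity and apply it directly to $u$ on each ball before summing — no commutator appears at all.) Third, your ``finite propagation speed'' heuristic correctly explains why the window length $\lambda^{\rho}$ is natural, but it is not actually needed for the covering-and-summing argument; the validity of summing is automatic from bounded overlap. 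Finally, \eqref{quasimode of order 1+rho} as cited is for $q=q_n$ only, so the extension to the full range $2\le q\le\infty$ claimed in the lemma needs the explicit interpolation with the $L^2\to L^\infty$ bound from \cite[(5)]{MR2280790}, which the paper invokes but you leave as a remark.
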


\begin{proof}
By interpolation with the $L^2\to L^{\infty}$ bound \cite[(5)]{MR2280790} we can reduce this to the bound for $q=q_n$. 
By the implication $(c)\implies (a)$ in Corollary \ref{corollary equivalence cluster and quasimode} it suffices to prove the quasimode bound
\begin{align}
    \|u\|_{L^{q_n}(M)}\lesssim \lambda^{1/q_n+\rho/2}(\|u\|_{L^2(M)}+\lambda^{-1}\|\nabla_gu\|_{L^2}+\lambda^{-1-\rho}\|(\Delta_g+\lambda^2)u\|_{L^2(M)}.
\end{align}
After some preliminary reductions as detailed in Section 4, the latter follows from summing the bounds in \cite[Corollary 7]{MR2280790} over cubes $Q_R$ (of sidelength $R=\lambda^{-\rho}$) contained in the unit cube.
\end{proof}

\begin{remark}
Lemma \ref{lemma large spectral windows Cs} shows that we have no loss estimates for spectral windows of length $\lambda^{\rho}$. Indeed, orthogonality and optimality of the universal estimates \eqref{Sogge}  show that \eqref{no loss estimate for long spectral windows} is best possible. Lemma \ref{lemma large spectral windows Cs} is in a sense a dual version of \cite[Theorem 2]{MR2280790}, which states no loss estimates for the spectral windows of unit length hold on the $\lambda^{-\rho}$ spatial scale.
\end{remark}

We conclude this subsection with resolvent estimates for $C^s$ metrics that are perfect with respect to the spectral region, but not with respect to the exponent. They rely on spectral cluster bounds of Smith \cite{MR2280790}. These are sharp at the critical exponent $q_n$, but improvements are expected for larger $q$. 

\begin{theorem}\label{Theorem Cs perfect region, imperfect q}
Let $\Delta_g$ be the Laplace--Beltrami operator on an $n$-dimensional compact boundaryless Riemannian manifold with $C^{s}$ metric, $1\leq s\leq 2$. If
\begin{align}\label{q in Cs case}
2\leq q\leq \tfrac{2n(s+2)+s-2}{(n-2)(s+2)}=:q_s,
\end{align}
and $q<\infty$ if $n=2$, then we have 
\begin{align}\label{Cs smaller q}
\|(\Delta_g+(\lambda+\I)^2)^{-1}\|_{L^{q'}\to L^{q}}\lesssim \langle\lambda\rangle^{2\gamma(q)-1},
\end{align}
where $\gamma(q)=\sigma(q)+\tfrac1q\tfrac{2-s}{2+s}\leq \tfrac12$. 
\end{theorem}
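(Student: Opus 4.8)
The plan is to deduce Theorem \ref{Theorem Cs perfect region, imperfect q} from Theorem \ref{theorem spectral cluster to resolvent intro} by feeding in the spectral cluster bounds of Smith \cite{MR2280790} for $C^s$ metrics. First I would record the unit-window spectral cluster estimate
\begin{align*}
\|\Pi_{[\lambda,\lambda+1]}\|_{L^2\to L^q}\lesssim\langle\lambda\rangle^{\gamma(q)},\qquad 2\le q\le q_s.
\end{align*}
The essential case is the critical exponent $q=q_n$, where this bound (equivalently, by Corollary \ref{corollary equivalence cluster and quasimode}, the corresponding unit-window quasimode estimate) is the spectral cluster bound of Smith \cite{MR2280790} for $C^s$ metrics; a closely related localized estimate on the $\lambda^{-\rho}$ scale, $\rho=(2-s)/(2+s)$, appears as \eqref{quasimode of order 1+rho} (cf.\ also \cite{MR2289621}). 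To reach the full range I would interpolate: interpolation of the $q=q_n$ estimate with the no-loss bound $\|\Pi_{[\lambda,\lambda+1]}\|_{L^2\to L^\infty}\lesssim\langle\lambda\rangle^{(n-1)/2}$ from \cite[(5)]{MR2280790} produces the estimate with exponent $\gamma(q)$ for $q_n\le q\le q_s$ --- on that interval $\sigma(q)$ and $\gamma(q)$ are both affine in $1/q$, so the interpolated exponent coincides with $\gamma(q)$ --- while interpolation with the trivial $L^2\to L^2$ bound handles $2\le q\le q_n$ with an exponent no larger than $\gamma(q)$, so the stated estimate holds there as well.

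The second step is to apply Theorem \ref{theorem spectral cluster to resolvent intro}, whose hypotheses are all in place: $g\in C^s$ with $s\ge1$ is Lipschitz; $\gamma(q)\ge\sigma(q)$, while $\gamma(q)\le\sigma(q)+\tfrac12$ since the correction to $\sigma(q)$ is at most $\tfrac16$; and $2\le q\le q_s\le 2^*=2n/(n-2)$, the last inclusion because $q_s-2^*=(s-2)/\big((n-2)(s+2)\big)\le0$. Theorem \ref{theorem spectral cluster to resolvent intro} then yields exactly $\|(\Delta_g+(\lambda+\I)^2)^{-1}\|_{L^{q'}\to L^q}\lesssim\langle\lambda\rangle^{2\gamma(q)-1}$, which is \eqref{Cs smaller q}; and since $\gamma(q)\le\tfrac12$ on the range \eqref{q in Cs case}, the power $2\gamma(q)-1$ is nonpositive, so this is a perfect-region (uniform) resolvent estimate. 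When $n=2$ the denominator in the formula for $q_s$ vanishes, $q_s$ is to be read as $+\infty$, and the same argument goes through for every finite $q$.

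I do not expect any genuinely hard step here: all the geometry is packaged inside Smith's spectral cluster bound, and the resolvent estimate is manufactured by the black box of Theorem \ref{theorem spectral cluster to resolvent intro} (with the trivial values $\delta=\epsilon=1$ in the underlying Theorem \ref{theorem spectral cluster to resolvent stronger version}). The only point requiring care is the bookkeeping around the endpoints: checking that interpolation yields exponents $\le\gamma(q)$ over all of $2\le q\le q_s$, so that Theorem \ref{theorem spectral cluster to resolvent intro} really outputs the power $2\gamma(q)-1$, and verifying the numerology $q_s\le 2^*$ and $\gamma(q)\le\tfrac12$, which are what make Theorem \ref{theorem spectral cluster to resolvent intro} applicable and its conclusion uniform in $\lambda$.
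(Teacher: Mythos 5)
Your proposal is correct and takes essentially the same route as the paper: the paper's proof is the single sentence ``This follows from \cite[Corollary 3]{MR2280790} in combination with Theorem \ref{theorem spectral cluster to resolvent intro}.'' Your additional exposition unpacking Smith's spectral cluster bound via interpolation with the $L^2\to L^\infty$ estimate mirrors how the paper itself proves the closely related Lemma \ref{lemma large spectral windows Cs}, and the remaining bookkeeping (checking $g\in C^s\subset\mathrm{Lip}$ for $s\geq1$, $\gamma(q)\in[\sigma(q),\sigma(q)+\tfrac12]$, and $q_s\leq 2^*$) matches what is needed to invoke Theorem \ref{theorem spectral cluster to resolvent intro}.
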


\begin{proof}
This follows from \cite[Corollary 3]{MR2280790} in combination with Theorem~\ref{theorem spectral cluster to resolvent intro}.
\end{proof}

\begin{remark}
Even though Theorem \ref{Theorem Cs perfect region, imperfect q} does not yield a uniform Sobolev inequality, i.e.\ $q<2^*$ if $n\geq 3$, the value of $q_s$ is surprisingly close to the Sobolev exponent. Figure 2 depicts the ratio $q_s/2^*$ for various values of $n$ and $s$.   
\begin{figure}[h]
\begin{center}
\begin{tabular}{|c|c|c|c|}
\hline
&$n=3$&$n=4$&$n=5$\\
\hline
$s=1$&$0.944444$&$0.958333$&$0.966667$\\
\hline
$s=1.3$&$0.964646$&$0.973485$&$0.978788$\\
\hline
$s=1.6$&$0.981481$&$0.986111$&$0.988889$\\
\hline
\end{tabular}
\caption{The ratio $q_s/2^*$ for various values of $n$ and $s$.}
\end{center}
\end{figure}

\end{remark}

\subsection{Manifolds with boundary}

\begin{proof}[Proof of Theorem \ref{theorem boundary intro}]
(i) follows from Theorem \ref{theorem nonsmooth into} (iii), see Remark \ref{remark proof of theorem boundary (i)}. 

(ii): Smith and Sogge proved the spectral cluster bound \eqref{abstract spectral cluster estimate into} with 
\begin{align*}
\gamma(q)=
\begin{cases}
\frac{4}{3}\sigma(q)&\quad \mbox{if}\quad \frac{2(n+2)}{n-1}\leq q\leq q_n,\\
\sigma(q)+\frac{1}{3}\big(\eps(q)-\frac{1}{q}\big)_-&\quad \mbox{if}\quad  q_n\leq q\leq  2^*,
\end{cases}
\end{align*}
where $x_-=\max(0,-x)$ and
$\eps(q)=(n-1)(1/2-1/q)-2/q\geq 0$. See \cite[Theorem 2.1]{MR2316270} for $n=2$ and \cite[Theorem 7.2]{MR2316270} for $n\geq 3$. In fact, they proved homogeneous square function estimates for solutions to the wave equation, which imply the spectral cluster estimates (see e.g.\ \cite{MR1308407}, \cite{MR2262171}). Since Theorem \ref{theorem spectral cluster to resolvent intro} applies only to boundaryless manifolds, we have referred to the corresponding results of \cite{MR2316270}. These results apply to boundaryless manifolds with special Lipschitz metrics (smooth, with a Lipschitz singularity across a hypersurface). Thus, we are in the setting of Theorem \ref{theorem spectral cluster to resolvent intro}, and the claim follows.

(iii): Smith and Sogge \cite{MR1308407} have also proved perfect spectral cluster estimates for the Dirichlet realization of the Laplace-Beltrami operator on a smooth manifold with strictly concave boundary, see Remark \ref{remark proof of theorem boundary (iii)}.
The bound in (iii) is an immediate consequence of these results and e.g.\ \eqref{abstract resolvent bound with log loss}, with $\delta,\epsilon=1$ there. 
\end{proof}

\subsection{Improved estimates}
To prove Theorem \ref{theorem improved intro} we only need to combine the spectral cluster bound \cite[Theorem 1]{Canzani--Galkowski} of Canzani and Galkowski with Theorem \ref{theorem spectral cluster to resolvent stronger version}. Note that this is the only instance where we need the version for improved bounds ($\epsilon<1$). The result in \cite[Theorem 1]{Canzani--Galkowski} is stated in a semiclassical framework, but it straightforward to translate it to a bound of the form \eqref{abstract quasimode bound}, or equivalently \eqref{abstract spectral cluster bound epsilon}. More precisely, \cite[Theorem 1]{Canzani--Galkowski}  asserts that 
\begin{align*}
\|u\|_{L^q}\lesssim h^{-\sigma(q)}\left(\frac{\|u\|_{L^2}}{\sqrt{\log h^{-1}}}+\frac{\sqrt{\log h^{-1}}}{h}\|(-h^2\Delta_g-1)u\|_{H_{\rm scl}^{\frac{n-3}{2}-\frac{n}{q}}}\right)
\end{align*}
for $0<h\ll 1$. Here we considered the special case $U=M$ in the theorem of Canzani--Galkowski and changed the notation to $p=q$, $\delta(p)=\sigma(q)$. We also translate this from the semiclassical to the high frequency setting by taking $h=1/\lambda$, obtaining
\begin{align*}
\|u\|_{L^q}\lesssim \frac{\lambda^{\sigma(q)}}{\sqrt{\log\lambda}}\|u\|_{L^2}+\lambda^{\sigma(q)-1}\sqrt{\log\lambda}\|(\Delta_g+\lambda^2)u\|_{L^2}.
\end{align*}
We used that the semiclassical Sobolev norms $H_{\rm scl}^2$ appearing in the previous estimate are innocuous in the frequency-localized regime, i.e.\ that
\begin{align*}
\|(-h^2\Delta_g-1)u\|_{H_{\rm scl}^{s}}\lesssim \|(-h^2\Delta_g-1)u\|_{L^2}
\end{align*}
for $u$ Fourier supported in $|\xi|\lesssim 1/h$. Now a direct comparison with \eqref{abstract quasimode bound}, with $\gamma(q)=\sigma(q)$, $\epsilon=1/\ln\langle\lambda\rangle$ and $\delta=\sqrt{\epsilon}$ yields the claimed resolvent estimate.

\subsection{Fractional Schrödinger operators}
We give a brief argument how the proof of Theorem \ref{theorem spectral cluster to resolvent intro} (or that of Theorem \ref{theorem spectral cluster to resolvent stronger version}) can be modified in such a way that Theorem \ref{theorem fractional intro} follows from the spectral cluster bounds of Smith \cite{MR2262171}. This also gives a flavor of other possible generalizations. The point of departure is the first order equation \eqref{first order equation (revision)}. This is still valid if $u$ satisfies $((-\Delta_g)^{\alpha/2}-(\lambda+\I)^{\alpha})u=\lambda^{\alpha-1}F$ (here we set $\delta,\epsilon=1$ and use a different normalization for the right hand side in the resolvent equation than in the case $\alpha=2$). The reason is that the symbol $p(x,\xi)=|\xi|_g^{\alpha}-\lambda^{\alpha}$ is of real principal type, with $|p_{\xi}|\gtrsim \lambda^{\alpha-1}$ on $\{p=0\}$ and at frequency $\lambda$. Since $g$ is one order smoother that in the assumptions of Theorem \ref{theorem spectral cluster to resolvent stronger version}, it would be sufficient to perform the symbol smoothing at frequency $|\xi|\ll\sqrt{\lambda}$ as opposed to $\lambda$. Then, in a conic region and in suitable coordinates,
\begin{align*}
p(x,\xi)=e(x,\xi)(\xi_1-a(x,\xi',\lambda)),
\end{align*}
where $a>0$ is a symbol in $\lambda C^2S_{\lambda,\sqrt{\lambda}}$, and $e(x,\xi)$ is elliptic and of size $\lambda^{\alpha-1}$. 
Let $Q$ be a parametrix for $e^w$, with a symbol in $\lambda^{1-\alpha}S_{\lambda,\lambda}$. As in the proof of Theorem \ref{theorem spectral cluster to resolvent stronger version} first order calculus yields
\begin{align}\label{reduction proof fractional}
(D_{x_1}-a_{\lambda}(x,D_{x'}))u=\lambda^{\alpha-1}QF+Ru
\end{align}
where $R=\mathcal{O}_{L^2\to L^2}(1)$ and the $x$-frequencies of $a_{\lambda}(x,\xi')$ are truncated to $|\xi|\ll\sqrt{\lambda}$.
We proceed as in the proof of Theorem \ref{theorem spectral cluster to resolvent stronger version}. Using the abstract results of Corollary \ref{corollary equivalence cluster and quasimode} (implication $(a)\implies (c)$), we first rewrite Smith's spectral cluster bound \cite{MR2262171} as a quasimode estimate
\begin{align}\label{quasimode proof fractional}
\|u\|_{L^{q}}\lesssim \lambda^{\sigma(q)}\|u\|_{L^{2}}+\lambda^{\sigma(q)-1}\|(\Delta_g+\lambda^2)u\|_{L^{2}},
\end{align}
then we reverse the steps in the reduction to \eqref{reduction proof fractional} to convert \eqref{quasimode proof fractional} to a microlocalized estimate 
\begin{align*}
\|u\|_{L^q}\lesssim \lambda^{\sigma(q)}(\|u\|_{L^2}+\|(D_{x_1}-a_{\lambda}^w(x,D_{x'}))u\|_{L^2}).
\end{align*}
This is exactly the same as \eqref{microlocalized quasimode} but with $\delta,\epsilon=1$. Thus the claimed resolvent estimate follows from Lemma \ref{lemma first order 2}, just as in the proof of Theorem \ref{theorem spectral cluster to resolvent stronger version}.

\subsection*{Acknowledgements} I would like to thank Rupert Frank for suggesting the topic of uniform resolvent estimates on manifolds with boundary, from which the present work emerged. I would also like to thank Herbert Koch, Hart Smith, Chris Sogge and Daniel Tataru for illuminating discussion during various stages of the project. I am grateful to Jeff Galkowski for discussions that lead to a version of Theorem \ref{theorem spectral cluster to resolvent stronger version} that is applicable to improved estimates. I would also like to thank the anonymous referees for their comments, which led to significant improvements in the presentation of Section 4.
Support by the Engineering \& Physical Sciences Research Council [grant
number EP/X011488/1], during the revision period of the article, is gratefully acknowledged.

\bibliographystyle{abbrv}
%\bibliography{C:/Users/Jean-Claude/Dropbox/papers/bibliography_masterfile}

\end{document}